\newtheorem{theorem}{Theorem}
\newtheorem{definition}{Definition}
\newtheorem{lemma}{Lemma}
\newtheorem{cor}{Corollary}
\newtheorem{remark}{Remark}
\newtheorem{prop}{Proposition}
\newcommand{\be}{\begin{enumerate}}
\newcommand{\ee}{\end{enumerate}}
\newcommand{\beq}{\begin{equation}}
\newcommand{\eeq}{\end{equation}}
\newcommand{\sst}{\; | \;}  % "set such that",  vertical bar | with some space
\newcommand{\gst}{\, | \,}  % "group such that", vertical bar with small space, for group presentations
\newcommand{\naturals}{\ensuremath{\mathbb{N}}} % the Natural numbers
\newcommand{\cs}{,\;} % "comma space", a comma followed by some space
\newcommand{\qs}{\:} % "quantifier space", used between quantifiers. Same as medium space
\newcommand{\GammaPresentation}{\langle A\,|\,\mathcal{R}\rangle} % presentation for our torsion-free relatively hyperbolic group
\newcommand{\GPresentation}{\ensuremath{\langle Z\gst S\rangle}}% presentation for our arbitrary group G
\newcommand{\Hom}{\mathrm{Hom}}
\newcommand{\wl}[1]{|{#1}|} % word length
\newenvironment{romanenumerate}
	{\begin{enumerate}

	}
        {
        
        \end{enumerate}
        } % enumerate list with roman labels
\newenvironment{arabicenumerate}
	{\begin{enumerate}

	}
        {
        
        \end{enumerate}
        } % enumerate list with arabic labels
\newcommand{\braced}[4]{\left\{\begin{array}{ll} {#1} & {#2} \\ {#3} & {#4} \end{array}\right.}
\newcommand{\bracedTwo}{\braced}
\newcommand{\comment}[1]{} % for multi-line comments
\begin{document}

\title{Decidability of the Elementary Theory of a Torsion-Free Hyperbolic Group}
\author{Olga Kharlampovich, Alexei Myasnikov}

\maketitle
\begin{abstract}
Let $\Gamma$ be a  torsion free  hyperbolic group. We prove that  the elementary theory of $\Gamma$  is decidable and admits an effective quantifier elimination to boolean combinations of $\forall\exists$-formulas. The existence of such quantifier elimination was previously proved in \cite{Sela}.
\end{abstract}

\section{Introduction}

 It was proved in \cite{Sela} that every first order formula in the theory of a torsion free hyperbolic group is equivalent to a boolean combination of $\forall\exists$-formulas. We will prove the following result.
\begin{theorem} \label{main} Let $\Gamma $ be a torsion free hyperbolic group. There exists an algorithm, given a first-order formula $\phi$,  to find a boolean combination of $\forall\exists$-formulas that define the same set as $\phi$ over $\Gamma$.\end{theorem}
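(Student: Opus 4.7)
The plan is to follow the general architecture of Sela's non-effective quantifier elimination and upgrade each of its pieces to an algorithm. Since Sela has already shown that every formula over $\Gamma$ is equivalent to a boolean combination of $\forall\exists$-formulas, it suffices, given $\phi$, to produce algorithmically an equivalent $\forall\exists$-boolean combination. I would proceed by induction on the number of quantifier alternations, and the real work is in eliminating one block of $\exists$ quantifiers sitting inside $\forall$'s (or vice versa) from a formula whose innermost part is already a boolean combination of $\forall\exists$-formulas, producing another such boolean combination.

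The first technical layer I would build is an effective Makanin-Razborov diagram for systems of equations over $\Gamma$. Concretely, given a finite system $\Sigma(X,A)=1$ with coefficients in $\Gamma$, I want an algorithm that outputs a finite collection of proper quotients $\Gamma[X]/\ncl{\Sigma}\twoheadrightarrow L_1,\ldots,L_k$, together with discriminating families of homomorphisms $L_i\to\Gamma$ that encode all solutions. This requires an algorithmic construction of $\Gamma$-limit groups (as iterated extensions of centralizers of free products of hyperbolic groups), together with an algorithmic JSJ decomposition, and an effective procedure to run the shortening argument (canonical representatives of Rips-Sela in the hyperbolic setting give a decision procedure for the existential theory, which I would use as the base case and as a subroutine). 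Both ingredients, Makanin-Razborov and JSJ, can be made effective by combining solvability of equations over $\Gamma$ with constructive reconstruction of graphs of groups from actions on $\reals$-trees obtained as Gromov-Hausdorff limits; I would carry this out explicitly.

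With effective Makanin-Razborov in hand, the next step is an algorithmic version of the implicit function theorem / Merzlyakov-type theorem over $\Gamma$: given a positive formula $\forall X\,\exists Y\,(\Sigma(X,Y,A)=1)$ true in $\Gamma$, effectively produce witnesses $Y=w(X,A)$ defined on each irreducible component of the Makanin-Razborov diagram of the universal part. I would then iterate: for a general $\forall\exists$-formula, decompose it into cases along the Makanin-Razborov diagram of the negation of the inequalities, produce on each branch a formal solution, and verify algorithmically whether the solution is valid. The elimination of one further quantifier alternation is then the effective analogue of Sela's procedure: the set of tuples at which a $\forall\exists$-formula holds is described by finitely many branches of a computable Hom-diagram, and quantifying existentially over this set is handled by a new Makanin-Razborov construction applied to the diagram, returning again a boolean combination of $\forall\exists$-conditions.

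The main obstacle, and where I would spend the bulk of the argument, is making the shortening/JSJ machinery effective over an arbitrary torsion-free hyperbolic $\Gamma$ rather than a free group. In the free case (Kharlampovich-Myasnikov) one has direct combinatorial control via Nielsen transformations and cancellation on words; over $\Gamma$ one must replace these by quasigeodesic approximations and control of the constants coming from $\delta$-hyperbolicity, and one must ensure that the termination of each diagram can be certified algorithmically (bounding the complexity of limit quotients in a computable invariant such as Scott complexity or a suitable rank). Once termination is made effective and uniform in the input formula, the induction on alternation depth goes through and Theorem \ref{main} follows.
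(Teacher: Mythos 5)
Your high-level strategy --- start from the (non-effective) Sela quantifier elimination to boolean combinations of $\forall\exists$-formulas and upgrade the key ingredients (Makanin--Razborov diagrams, JSJ decompositions, the implicit function / Merzlyakov theorem, the termination argument) to algorithms --- does coincide with the general architecture of the paper. But two of the implementation paths you propose are precisely the ones the paper takes care to avoid, and at least one of them would not work.

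First, you propose to make Makanin--Razborov and JSJ effective via ``constructive reconstruction of graphs of groups from actions on $\reals$-trees obtained as Gromov--Hausdorff limits.'' This is a genuine gap: Gromov--Hausdorff limits of rescaled actions are not a computable construction, and there is no known way to extract a graph-of-groups decomposition algorithmically from such a limiting process. The paper sidesteps this entirely. For Makanin--Razborov diagrams it uses the Rips--Sela \emph{canonical representatives} (Lemma \ref{Lem:RipsSela1}) to translate a system of equations over $\Gamma$ into finitely many systems over the free group $F$, runs the already-effective machinery of \cite{KMel} (generalized equations, the elimination process) over $F$, and then transports the resulting diagrams back to $\Gamma$ (Propositions \ref{Thm:EffectiveSolutions}, \ref{prop7}, \ref{relh}). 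You mention canonical representatives only as ``a decision procedure for the existential theory'' used ``as the base case and as a subroutine''; in fact they are the backbone of the whole effective Hom-diagram construction, not just a base case. For JSJ decompositions the paper does not go anywhere near $\reals$-trees: it invokes the Dahmani--Groves algorithm for primary abelian JSJ of toral relatively hyperbolic groups and extends it to the relative case (Proposition \ref{prop6}), together with effective intersection/membership algorithms for relatively quasi-convex subgroups (Proposition \ref{int}).

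Second, you say that over $\Gamma$ ``one must replace'' the free-group combinatorics ``by quasigeodesic approximations and control of the constants coming from $\delta$-hyperbolicity,'' i.e.\ you propose to redo the elimination-process combinatorics directly in $\Gamma$. The paper does exactly the opposite: it reduces to $F$, where that combinatorics already exists, and re-lifts. Doing the combinatorics directly in $\Gamma$ would require rebuilding the entire generalized-equations apparatus in a hyperbolic setting, which is a much larger and untested project. Your proposal also leaves out all the concrete machinery that makes termination and effectiveness actually go through: the $T_{EA}$ and parametric $T_X$ trees, configuration groups and certificates, tight enveloping NTQ groups, and the explicit complexity measure $Cmplx(Var_{\rm fund}) = (\dim + factors, (size\,Q_1,\ldots,size\,Q_m), ab)$ that decreases lexicographically; ``Scott complexity or a suitable rank'' is too vague to certify termination of the specific recursion in Sections \ref{sec:4}--\ref{sec:6}. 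Finally, the paper does not eliminate quantifiers by a clean induction on alternation depth of a general formula; it isolates the specific form $\Theta(P)=\exists Z\,\forall X\,\exists Y\,(U=1\wedge V\neq 1)$ and builds the parametric $\exists\forall$-tree for it, which is where the real work lies.
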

This theorem will be proved in Section \ref{sec:6}. We will also prove  the following result in Section \ref{sec:4}.
\begin{theorem} \label{aetheory}The $\forall\exists$-theory of a torsion-free hyperbolic group is decidable. \end{theorem}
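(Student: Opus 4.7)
The plan is to decide sentences of the form $\forall \bar{x}\,\exists \bar{y}\,\varphi(\bar{x},\bar{y})$ with $\varphi$ quantifier-free. After passing to disjunctive normal form the sentence becomes
\[
\forall \bar{x}\,\bigvee_{i=1}^{k}\,\exists \bar{y}\,\bigl(W_i(\bar{x},\bar{y})=1 \;\wedge\; U_i(\bar{x},\bar{y})\neq 1\bigr),
\]
which asserts that the definable set $D=\bigcup_i D_i$, with $D_i=\{\bar{x} : \exists \bar{y}\,(W_i=1\wedge U_i\neq 1)\}$, equals $\Gamma^{|\bar{x}|}$. My approach is to describe each $D_i$ effectively through a Makanin-Razborov (MR) diagram over $\Gamma$ and then to test the covering $D=\Gamma^{|\bar{x}|}$ via an effective Merzlyakov-type theorem.

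For each disjunct $i$ I effectively build the MR-diagram of the $\Gamma$-group presented by $\langle \Gamma,\bar{x},\bar{y}\mid W_i(\bar{x},\bar{y})\rangle$. This is a finite tree whose leaves carry $\Gamma$-limit groups $L_j$ equipped with canonical JSJ decompositions; every solution of $W_i=1$ in $\Gamma$ factors through some leaf after precomposition with a modular automorphism of the JSJ, and the projection of the leaf to the $\bar{x}$-coordinates, refined by imposing $U_i\neq 1$ generically, is the corresponding component of $D_i$. By a Merzlyakov-type theorem adapted to the torsion-free hyperbolic setting, the $\bar{x}$-projection of a leaf covers $\Gamma^{|\bar{x}|}$ (modulo a lower-dimensional subvariety) precisely when a formal witness $\bar{y}=\bar{y}(\bar{x})$ exists realising a retraction of a canonical extension of $L_j$ onto its parameter subgroup $\langle \bar{x}\rangle$. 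The covering condition $D=\Gamma^{|\bar{x}|}$ can then be tested leaf-by-leaf using effectively constructed test sequences.

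At each leaf one runs a bounded search: candidate formal witnesses $\bar{y}(\bar{x})$ are enumerated up to a Merzlyakov length bound; whether a candidate yields a valid retraction is a word-problem check in a $\Gamma$-limit group, which reduces to the word problem in $\Gamma$; and nonvanishing of $U_i$ along a test sequence is another bounded word-problem verification. The sentence is declared true if, for every leaf of the MR-diagram of the $\bar{x}$-variety $\Gamma^{|\bar{x}|}$, some disjunct supplies a valid witness; otherwise an uncovered leaf yields an explicit counterexample $\bar{x}$. Since the diagrams are finite and each search is bounded, the procedure terminates.

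The principal obstacle is effectiveness throughout Sela's machinery in the torsion-free hyperbolic setting. I will need: an algorithm producing the MR-diagram of an arbitrary parametric system of equations over $\Gamma$ (which subsumes the solvability of such systems, via a hyperbolic analogue of the Makanin-Razborov procedure); effective computation of JSJ decompositions for the $\Gamma$-limit groups that appear at the vertices; and an effective version of Sela's shortening argument producing both the test sequences and the a priori length bound on any hypothetical Merzlyakov witness. Establishing these ingredients is the substantive content of the proof; once they are in place, the combinatorial finishing outlined above yields the decision algorithm.
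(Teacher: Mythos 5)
Your proposal captures the correct starting point (reduce to a single conjunct $U=1\wedge V\neq 1$, build effective Makanin--Razborov/fundamental-sequence diagrams over $\Gamma$, and invoke an effective Merzlyakov-type theorem to produce formal witnesses), and you correctly identify the main technical infrastructure that has to be made effective. But the plan as described collapses the verification to a single pass: build the diagram, test each leaf by a bounded search for a formal witness, and either declare success or read off a counterexample. This is not how the decision works, and the gap is exactly where the real content of the theorem sits.

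The difficulty is the inequality $V\neq 1$. For a purely positive sentence $\forall X\,\exists Y\,U=1$, Merzlyakov does give a single global formal witness $Y=w(X)$ over $\Gamma\ast F(X)$, and deciding its existence is indeed one application of the parametrization/implicit-function machinery. Once $V\neq 1$ is present, however, the formal witnesses for $U=1$ may all satisfy $V(X,w(X))=1$ on a proper subvariety of the $X$-space, and the sentence then has to be re-examined on that subvariety. Your phrase ``modulo a lower-dimensional subvariety'' acknowledges this but does not engage with it: the subvariety is itself a coordinate group of a new system $U_0(X)=1$, one must again pass to its fundamental sequences, again look for formal witnesses, again carve out the bad locus, and so on. The paper encodes this recursion in the tree $T_{EA}(\Phi)$ of Sections~4.4--4.6, with block-NTQ groups and induced fundamental sequences at each node, and the crux of the whole argument is Theorem~\ref{AE}: that this tree is \emph{finite}, proved via a decrease in a complexity tuple (Kurosh rank, regular size of MQH subgroups, abelian rank). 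Nothing in your sketch addresses termination of this recursion; ``bounded search up to a Merzlyakov length bound'' is not available, because the recursion depth is bounded only by the complexity argument, not by any a~priori word-length bound. Separately, your description of each $D_i$ as ``the $\bar{x}$-projection of a leaf refined by imposing $U_i\neq 1$ generically'' is too coarse: describing an existential projection with an inequation requires exactly this stratification, so you cannot use it as an unexplained primitive in the middle of the proof. In short, the effectiveness of the MR/JSJ/Merzlyakov ingredients is necessary but not sufficient; you still need the iterative $\forall\exists$-tree and its effective finiteness proof, which is the substantive part of the paper's Section~4 and is missing from your proposal.
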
 
These results imply
\begin{theorem} The elementary theory of a torsion-free hyperbolic group is decidable.
\end{theorem}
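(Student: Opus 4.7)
The plan is to derive this statement as an immediate corollary of Theorems \ref{main} and \ref{aetheory}. Given any first-order sentence $\phi$ in the language of groups, I would first invoke Theorem \ref{main} to produce algorithmically an equivalent (over $\Gamma$) boolean combination $\psi$ of $\forall\exists$-formulas. Since $\phi$ has no free variables, the resulting formula $\psi$ is a boolean combination of $\forall\exists$-sentences $\psi_1,\ldots,\psi_k$ together with the boolean connectives.

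Next I would apply Theorem \ref{aetheory} to each constituent sentence $\psi_i$: by the decidability of the $\forall\exists$-theory of $\Gamma$, each truth value $\psi_i^{\Gamma}\in\{\text{true},\text{false}\}$ can be computed effectively. Finally, the truth value of $\psi$, and hence of $\phi$, over $\Gamma$ is obtained by evaluating the boolean combination on these truth values, which is a finite propositional computation.

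There is no substantive obstacle to this argument; the content lies entirely in Theorems \ref{main} and \ref{aetheory}. The only point worth noting is the need to track that the algorithm of Theorem \ref{main} applied to a sentence indeed outputs a boolean combination of sentences (and not of formulas with free variables that happen to be vacuously true or false), but this follows from the statement of Theorem \ref{main} since equivalence over $\Gamma$ preserves the number of free variables. Thus the algorithm for the elementary theory is simply the composition of the two preceding algorithms with a boolean evaluation step.
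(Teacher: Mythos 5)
Your proposal is correct and matches the paper's own treatment: the paper states this theorem with the single-word justification that it follows from Theorems \ref{main} and \ref{aetheory}, which is exactly the composition you describe. Your parenthetical worry about free variables is handled correctly in spirit, though the precise reason is that Theorem \ref{main} produces a formula defining the same subset of $\Gamma^n$ (with $n$ the number of free variables of $\phi$), so with $n=0$ the output is a boolean combination of $\forall\exists$-sentences; alternatively one could simply substitute $1$ for any spurious free variables and invoke Theorem \ref{aetheory} for $\forall\exists$-sentences with constants.
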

Notice that an algorithm to solve systems of equations in torsion free hyperbolic groups was constructed in \cite{RS95}. The problem of solving equations in such groups was reduced to the problem of solving equations in a free group, and Makanin's algorithm was used for solving equations in a free group \cite{Mak}.  Decidability of the existential theory of a free group was shown in \cite{Mak}, and decidability of the existential theory of a torsion free hyperbolic group was proved in \cite{Sela}, \cite{D1}, and, later, in  \cite{KMac}. It was shown in \cite{KMel} that the elementary theory of a free group is decidable (solution of an old problem of Tarski).  In \cite{KMel1} we proved the statement of Theorem \ref{main} for a free group.  

The techniques that we are using can be applied in some other important classes of groups, for example, partially commutative groups (right-angled Artin groups). It is known that the compatibility problem for systems of equations over partially commutative groups is decidable, see \cite{[DM06]}. Moreover, the universal (existential) and positive theories of partially commutative groups are also decidable, see \cite{[DL04]} and \cite{[CK07]}. An effective description of the solution set of systems of equations over a partially commutative group was 
given in \cite{CRK1} (using an analogue of Makanin-Razborov diagrams), and this description can be applied to study elementary theories of these groups. 

In this version of the paper  we corrected some errors found by D. Groves and H. Wilton \cite{GW1} in the previous version. We also corrected in Section \ref{ImFT} some similar errors in Theorem 2.3 in Sela's paper \cite{Sela} because we are using this result.

\section{Preliminary facts}
\subsection{Toral relatively hyperbolic groups}
A finitely generated group $G$ that is hyperbolic relative to a collection $\{P_{1},\ldots,P_{k}\}$ of subgroups is called \emph{toral} if $P_{1},\ldots,P_{k}$ are all abelian and $G$ is
torsion-free.

Many algorithmic problems in
(toral) relatively hyperbolic groups are decidable, and in particular we take note of the following for later use.
\begin{lemma}\label{Lem:AlgorithmsRelativelyHyperbolic}
In every toral relatively hyperbolic group $G$, the following hold.
\begin{arabicenumerate}
\item The conjugacy problem in $G$, and hence the word problem, is decidable.
\item If $g\in G$ is a hyperbolic element (i.e. not conjugate to any element of any $P_{i}$), then the centralizer $C(g)$ of $g$ is an infinite cyclic group.
Further, a generator for $C(g)$ can be effectively constructed.
\end{arabicenumerate}
\end{lemma}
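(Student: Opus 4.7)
My plan is to derive both parts from the interplay between the Gromov-hyperbolic geometry of the Groves--Manning cusped space of $G$ and the fact that the parabolics $P_i$ are finitely generated abelian, for which the word and conjugacy problems are trivially decidable.

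For part (1), I would first invoke decidability of the word problem in $G$: a relatively hyperbolic group whose parabolic subgroups have decidable word problem has decidable word problem itself (Farb, made effective by Osin). Since the $P_i$ are finitely generated abelian, this hypothesis is automatic. For the conjugacy problem I would cite Bumagin's theorem: in a relatively hyperbolic group with decidable word problem and with decidable conjugacy problem in each $P_i$, the conjugacy problem in $G$ is decidable. Conjugacy in an abelian group is equality, so both hypotheses hold in the toral setting. The underlying idea is that two conjugate hyperbolic elements admit a conjugator of length bounded in terms of their word lengths (via the BCP property and thin triangles in the cusped space), while the parabolic case reduces, after finitely many adjustments by short conjugators, to conjugacy inside a single $P_i$.

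For part (2), I would first show qualitatively that $C(g)$ is infinite cyclic. Since $g$ is hyperbolic it acts as a loxodromic isometry on the Gromov-hyperbolic cusped space $X$, with two fixed points $g^{\pm\infty}\in\partial X$. The centralizer $C(g)$ preserves this fixed-point pair and hence acts on a quasi-axis of $g$. The translation length along this axis defines a homomorphism $\tau\colon C(g)\to\reals$ whose image is discrete and whose kernel has bounded orbits, hence is finite by properness of the action; torsion-freeness of $G$ then forces the kernel to be trivial. Since $\tau(g)\neq 0$, the image is a non-trivial discrete subgroup of $\reals$ and so $C(g)\cong\integers$.

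To produce a generator effectively I would enumerate elements $h\in G$ in order of word length and, using the decidable word problem, test whether $g=h^n$ for some $n\geq 1$. What makes this terminate is an a priori upper bound on $|h_0|$, where $h_0$ generates $C(g)$: from the quasi-axis picture, $\tau(h_0)=\tau(g)/n$ for some $n\geq 1$, and the explicit quasi-convexity constants of the $g$-axis in $X$ convert this into an effective bound $|h_0|\leq F(|g|)$ with a computable function $F$. The hard step I expect is precisely this effectivity: the qualitative cyclicity and the axis constants are standard, but extracting a computable bound requires working with an explicit finite relative presentation of $G$ together with the Groves--Manning cusped construction, an effectivity already carried out in the relatively hyperbolic setting by Dahmani, Groves, Osin, and others.
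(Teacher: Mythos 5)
Your treatment of part (1) is essentially the paper's: cite Farb for the word problem and Bumagin for the conjugacy problem, using that the parabolics are finitely generated abelian. For part (2), your qualitative argument that $C(g)$ is infinite cyclic (translation-length homomorphism on the Groves--Manning cusped space, discreteness of the image, triviality of the kernel by properness plus torsion-freeness) is a genuinely different route from the paper, which instead quotes Osin's Theorem~4.3 to get that $E(g)=\{h : h^{-1}g^n h=g^n \text{ for some } n\}$ is virtually cyclic and then uses torsion-freeness via Myasnikov--Remeslennikov; both are correct.

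The effectivity step, however, has a gap as written. The paper explicitly does \emph{not} claim a direct computable bound on the word length of the primitive root $\overline g$; it uses two effectivized facts extracted from Osin's memoir: (i) a computable bound $N$ on any root exponent of $g$, and (ii) a computable $\beta$ such that any root of $g$ is \emph{conjugate} to an element of word length $\leq\beta(|g|)$. Its algorithm then searches over all short $f$ and all $n\leq N$, tests conjugacy of $f^n$ with $g$, picks the $f$ realizing the maximal exponent $n$, finds the conjugator $h$, and outputs $h^{-1}fh$. Your proposal instead asserts a direct bound $|h_0|\leq F(|g|)$ ``from the quasi-convexity constants of the $g$-axis,'' but this does not follow as stated: closeness to the axis in the cusped space controls cusped distance, not word length (horoballs distort the two), and Osin's lemma is deliberately phrased up to conjugacy precisely because a primitive root of $g$ need not itself be short. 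A computable word-length bound on $h_0$ can be recovered, but only by combining (ii) with an effective conjugacy-length bound (writing $h_0=wf_0w^{-1}$ with $|f_0|\leq\beta(|g|)$ and bounding $|w|$), which is essentially the same ingredient the paper uses in a cleaner form. You also omit that the enumeration must select the root of \emph{maximal} exponent among those found -- otherwise the search would accept $g$ itself ($n=1$) rather than a primitive root.
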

\begin{proof}
The word problem was solved in \cite{Far98} and the conjugacy problem in \cite{Bum04}.
For the second statement, let $G=\langle A\rangle$ and let $g\in G$ be a hyperbolic element.
Theorem~4.3 of \cite{Osi06IJAC} shows that the subgroup
\[
E(g)= \{ h\in G \sst \exists\qs n\in\naturals :\qs h^{-1}g^{n}h=g^{\pm n}\}
\]
has a cyclic subgroup of finite index.  Since $G$ is torsion-free, $E(g)$ must be infinite cyclic (see for example the proof of
Proposition~12 of \cite{MR96}).  Clearly $C(g)\leq E(g)$, hence $C(g)$ is infinite cyclic.

To construct a generator for $C(g)$, consider the following results of D. Osin (see the proof of Theorem~5.17 and Lemma~5.16 in \cite{osin}) :
%(Lemma~5.16 and Lemma~5.18 of \cite{Osi06}):
\begin{romanenumerate}
\item there exists a computable constant $N$, which depends on $G$ and the word length $\wl{g}$ \label{Osin2}
 such that if $g=f^{n}$ for some $f\in G$ and  positive $n$, then $n\leq N$;
\item  there is a computable function $\beta:\naturals \rightarrow\naturals$  \label{Osin1}
such that if $f$ is an element of $G$ with $f^{n}=g$ for some positive $n$, then $f$ is conjugate to some element $f_{0}$ satisfying $\wl{f_{0}}\leq \beta(\wl{g})$.
%\item there is a computable function $\gamma:\naturals\rightarrow\naturals$ such that if $f$ is hyperbolic (of infinite order) and $f^k$ is conjugate
%to $g^l$ for some $g,l\in\integers\setminus\{0\}$ then there exist $k',l'\in\integers\setminus\{0\}$ such that $f^{k'}$ is conjugate to $g^{l'}$ and
%$\mathrm{max}\{k',l'\}\leq \gamma(\max\{|f|,|g|\})$; \label{Osin2}
%\item there is an algorithm which, given a hyperbolic element $f$ (of infinite order), determines if there exist $k,l\in\integers$ such that $f^k$ and $g^l$
%are conjugate.\label{Osin3}
\end{romanenumerate}
We proceed as follows.  Let
$\mathcal{F}$ be the set of all $f\in G$ such that $\wl{f}\leq \beta(\wl{g})$  and $h^{-1}f^{n}h = g$ for some $h\in G$ and $1\leq n\leq N$.
It is finite, non-empty (since $g\in\mathcal F$), and can be computed (since conjugacy is decidable).
Let $f$ be an element of $\mathcal{F}$ such that the exponent $n$ is maximum amongst elements of $\mathcal{F}$ and find an element $h\in G$ such
that $h^{-1} f^{n} h = g$ (we may find $h$ by enumeration).

We claim that if $\overline{g}$ is a generator of $C(g)$ then either $h^{-1}f h=\overline{g}$, or $h^{-1}f h = \overline{g}^{-1}$.  Indeed,
$h^{-1} f h\in C(g)$ since it commutes with $g=(h^{-1} f h)^{n}$, hence, $h^{-1} f h=\overline{g}^k$ for some $k$ and so
\begin{equation*}%\label{Eqn:Power}
g = (h^{-1}f h)^{n}=\overline{g}^{kn}.
\end{equation*}

Suppose $k> 0$.  Since $\overline{g}^{kn}=g$,  (\ref{Osin1}) implies that $\overline{g}$ is conjugate to some element $g_{0}$ with $\wl{g_{0}}\leq \beta(\wl{g})$.
Then $g_{0}^{kn}$ is conjugate to $g$, so by (\ref{Osin2}) $kn\leq N$, hence, $g_{0}\in \mathcal{F}$.
By maximality of the exponent in the choice of $f$, $k$
must be 1 and $h^{-1} f h=\overline{g}$.  If $k< 0$, a similar argument shows that $h^{-1}f h = \overline{g}^{-1}$.
\end{proof}

\begin{definition}\label{rel-q}
Let G be a group generated by a finite set $X$, $\{P_1,\ldots ,P_m\}$ be
a collection of subgroups of $G$. A subgroup $R$ of $G$ is called relatively quasi-
convex with respect to $\{P_1,\ldots ,P_m\}$ (or simply relatively quasi-convex when
the collection $\{P_1,\ldots ,P_m\}$ is fixed) if there exists a constant $\sigma >0$ such that
the following condition holds. Let $f, g$ be two elements of $R$, and $p$ an arbitrary
geodesic path from $f$ to $g$ in $Cayley (G,X\cup \mathcal P)$, where $\mathcal P$ is the union of all subgroups  in $\{P_1,\ldots ,P_m\}$. Then for any vertex $v\in p$, there
exists a vertex $w\in R$ such that
$$dist _X(u,w)\leq\sigma .$$
Note that, without loss of generality, we may assume one of the elements $f, g$ to
be equal to the identity since both the metrics $dist _X$ and $dist _{X\cup\mathcalP}$ are invariant
under the left action of $G$ on itself.\end{definition}
It is easy to see that, in general, this definition depends on $X$. However in
case of relatively hyperbolic groups it does not depend on $X$ \cite{osin}.

\begin{prop}\label{int}\cite{KMqc}
Let $G$ be a  toral relatively hyperbolic  group  given  a finite presentation, and $H$ and $R$
be  finitely
generated  relatively quasi-convex subgroups   of  $G$  (given by finite generating sets).

1) There exists an algorithm which solves the membership problem for $H$;

2) There is a finite family ${\mathcal
J}$ of non-trivial intersections $J=H^g\cap R\neq 1$ such that any
non-trivial intersection $H^{g_1}\cap R$ has form $J^r$ for some
$r\in R$ and $J\in {\mathcal J}.$ If $H$ and $R$ have  peripherally finite index or trivial intersection with conjugates of peripheral subgroups, then  one can effectively find 
generators of the subgroups from ${\mathcal J}.$

\end{prop}

\begin{cor}
\label{cy:conjugate-into} Let $H,R$ be finitely generated  relatively quasi-convex
subgroups of a toral relatively hyperbolic group $G$. If $H$ and $R$ have  peripherally finite index or trivial intersection with conjugates of peripheral subgroups, 
then  one can effectively verify whether or not $R$ is conjugate
into $H$, and if it is, then find a conjugator.
\end{cor}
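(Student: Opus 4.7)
The approach is a direct reduction to Proposition \ref{int}. Note that $R$ is conjugate into $H$ if and only if there exists $g\in G$ with $R\leq H^g$, which (for nontrivial $R$) is equivalent to the intersection $H^g\cap R$ being equal to $R$ itself. If $R$ is trivial, output the identity as a conjugator and stop.

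Otherwise, invoke Proposition \ref{int} on $H$ and $R$ to effectively compute the finite family $\mathcal{J}=\{J_1,\ldots,J_k\}$ of nontrivial intersections, each given by a finite generating set. By the proposition, every nontrivial intersection $H^{g'}\cap R$ has the form $J^r$ for some $J\in\mathcal{J}$ and some $r\in R$. Hence if $R\leq H^g$ for some $g$, then $R=H^g\cap R=J^r$, and since $r\in R$ normalizes $R$ trivially (as a subgroup), this forces $R=J$ for some $J\in\mathcal{J}$. Conversely, if $R=J=H^g\cap R$ for some $g$, then $R\leq H^g$ and a conjugator exists.

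The decision procedure is therefore: for each $J\in\mathcal{J}$, test whether $R=J$; since $J\leq R$ holds automatically, it suffices to check that each generator of $R$ lies in $J$. Each $J\in\mathcal{J}$ is again a finitely generated relatively quasi-convex subgroup of $G$ (intersections of finitely generated relatively quasi-convex subgroups of a toral relatively hyperbolic group are again finitely generated and relatively quasi-convex), so Proposition \ref{int}(2), applied with $J$ playing the role of $H$, furnishes a decision procedure for membership in $J$. If no $J\in\mathcal{J}$ equals $R$, report that $R$ is not conjugate into $H$.

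If some $J\in\mathcal{J}$ equals $R$, then a conjugator is known to exist and can be produced by enumeration: run through elements $g\in G$ in order of length, and for each $g$ use the membership procedure for $H$ (supplied by Proposition \ref{int}(2)) to test whether $gr_ig^{-1}\in H$ for every generator $r_i$ of $R$; output the first $g$ that passes all the tests. The substantive content has already been packaged into Proposition \ref{int}; the only real point of care here is to verify that each $J\in\mathcal{J}$ is itself finitely generated and relatively quasi-convex so that the proposition can be reapplied to decide membership in $J$, after which the rest of the argument is bookkeeping.
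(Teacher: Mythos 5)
Your proof is correct and takes essentially the same route as the paper's: compute the finite family $\mathcal{J}$ via Proposition \ref{int}, observe that $R$ is conjugate into $H$ if and only if $R$ equals some $J\in\mathcal{J}$ (using that $R=J^r$ with $r\in R$ forces $R=J$), and test this by checking each generator of $R$ for membership in $J$. You spell out two details the paper leaves implicit — that membership in $J$ is decidable because $J$ is again finitely generated and relatively quasi-convex, and that a conjugator, once known to exist, is found by enumeration — but the substance of the argument is the same.
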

\begin{proof} Using Proposition \ref{int} we can find all non-trivial intersections $J=R\cap g^{-1}Hg$.  For each such  subgroup $J$ we check if all  generators of $R$ belong to $J$. 
If such $J$ exists, then $J=R$ and $R$ is conjugate into $H$. If such $J$ does not exist, then $R$ is not conjugate into $H$. 
\end{proof}

\subsection{JSJ decomposition of toral relatively hyperbolic groups}

\begin{definition} \label{primary} A splitting of a group $G$ is a graph of groups decomposition. The
splitting is called abelian if all of the edge groups are abelian.
An elementary splitting is a graph of groups decomposition for which the underlying
graph contains one edge. A splitting is reduced if it admits no edges carrying an amalgamation
of the form $A*_C C$. 

Let $G$ be a toral relatively hyperbolic group. A reduced splitting
of $G$ is called essential if

(1) all edge groups are abelian; and

(2) if $E$ is an edge group and
$x^k\in E$ for some $k>0$ then
$x\in E$. 

A reduced splitting
of $G$ is called primary if it is essential and all noncyclic abelian subgroups of $G$ are elliptic
(that is, conjugate into vertex subgroups of the splitting).\end{definition}
\begin{prop} \cite{DG} There is an algorithm which takes a finite presentation for a 
toral relatively hyperbolic group as input, and outputs its Grushko decomposition. There is also an algorithm which takes a finite presentation for a freely
indecomposable toral relatively hyperbolic group, $\Gamma$ say, as input and outputs a graph
of groups which is a primary JSJ decomposition for $\Gamma$.\end{prop}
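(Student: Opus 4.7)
The plan is to adapt the Rips--Sela strategy for constructing JSJ decompositions of hyperbolic groups to the toral relatively hyperbolic setting, using the algorithmic tools available from the previous lemmas and propositions. I would first compute the peripheral structure of $\Gamma$: since every hyperbolic element has cyclic centralizer by Lemma~\ref{Lem:AlgorithmsRelativelyHyperbolic}, every maximal noncyclic abelian subgroup is conjugate into one of the parabolic subgroups $P_i$. Each $P_i$ is finitely generated abelian and relatively quasi-convex, so applying Proposition~\ref{int} and Corollary~\ref{cy:conjugate-into} I would enumerate a finite list of conjugacy class representatives of all maximal noncyclic abelian subgroups; these must appear (up to conjugacy) as vertex groups in any primary JSJ.

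Next I would build a procedure for detecting essential abelian splittings relative to this peripheral structure. The crucial step is to produce, from the input presentation, a computable bound $N = N(\Gamma)$ such that if $\Gamma$ admits an essential abelian splitting at all, then it admits one whose edge group is generated by elements of word length at most $N$. Candidate edge subgroups (cyclic, or abelian contained in the finitely many computed maximal abelian subgroups) are enumerated up to length $N$; for each candidate $A$, one tests whether $\Gamma$ decomposes as an HNN or amalgamated product over $A$ by enumerating candidate vertex-group presentations and verifying equivalence with the given presentation via the decidable word problem and the algorithm of Proposition~\ref{int} for the relevant subgroup intersections.

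The JSJ is then built iteratively: begin with the one-vertex decomposition of $\Gamma$; at each stage, apply the detection procedure to a vertex group relative to its incident edge groups and the computed peripheral subgroups, and refine the graph of groups by the resulting elementary splitting. The process terminates by Bestvina--Feighn accessibility for finitely presented groups. Once termination is reached, I would identify quadratically hanging (surface-type) vertices from their vertex-group presentations using the well-known algorithm that recognizes surface groups with boundary, keep the abelian vertices produced in the first step, and root-close the edge groups (using the computable bounds on root extraction provided by Osin's results from \cite{osin} cited in Lemma~\ref{Lem:AlgorithmsRelativelyHyperbolic}) to obtain the primary JSJ.

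The main obstacle is producing the effective bound $N(\Gamma)$ together with an effective Bestvina--Feighn-type accessibility bound, since the termination of the refinement procedure and the completeness of the candidate search both depend on it. This requires a quantitative ``short splittings'' statement for toral relatively hyperbolic groups, obtained by combining the computable functions from \cite{osin}, the canonical representatives technology of Rips--Sela (as extended to the relatively hyperbolic setting in the cited work of Dahmani--Groves), and the quasi-convex subgroup algorithms above. Packaging this geometry into an explicit function of the input presentation is the technical heart of the proof.
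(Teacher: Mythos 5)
This proposition is cited by the paper from Dahmani--Groves \cite{DG}; the paper gives no proof of its own, so the relevant comparison is with the actual argument in \cite{DG}. Your outline is not that argument, and more importantly it has a gap that you yourself flag but do not close: the existence of a \emph{computable} bound $N(\Gamma)$ such that any essential abelian splitting can be realized over an edge group generated by words of length at most $N(\Gamma)$. No such quantitative ``short splittings'' statement for toral relatively hyperbolic groups follows from the ingredients you cite (Lemma~\ref{Lem:AlgorithmsRelativelyHyperbolic}, Proposition~\ref{int}, Corollary~\ref{cy:conjugate-into}, Osin's root-extraction bounds). Those give you control over centralizers, intersections of quasi-convex subgroups, and root closure, but nothing that a priori bounds the complexity of the shortest splitting. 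Since you describe this bound as ``the technical heart of the proof'' and then leave it as an open obstacle, the proposal as written does not constitute a proof.

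There is a second, independent problem with termination. You invoke Bestvina--Feighn accessibility to stop the iterative refinement, but the accessibility bound for finitely presented groups is not known to be effective; it guarantees that a bound exists, not that you can compute when you have reached it. For your search-and-refine loop to be an algorithm you need a certifiable stopping criterion, and you don't supply one. Relatedly, the step ``verify equivalence with the given presentation'' when testing a candidate amalgam or HNN decomposition is essentially an instance of the isomorphism problem for toral relatively hyperbolic groups --- which is precisely the main theorem of \cite{DG}, not something you can dispatch with Proposition~\ref{int}.

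The route Dahmani--Groves actually take is structurally different: rather than bounding the length of edge groups and searching, they build the JSJ computation on top of their Dehn-filling and equation-solving machinery (effective Makanin--Razborov diagrams for toral relatively hyperbolic groups), using a pair of complementary semi-algorithms --- one that enumerates candidate graph-of-groups decompositions and certifies each as a genuine splitting of $\Gamma$, and one that certifies when a decomposition is maximal/canonical --- so that termination is guaranteed by running them in parallel rather than by an a priori length bound. If you want to salvage your sketch, the honest move is either to prove the effective short-splitting bound (a nontrivial new result) or to abandon the length-bound strategy and follow the enumerate-and-certify strategy, in which case the hard content shifts to certifying maximality, which is where the equation-solving technology enters.
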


\begin{prop} \label{propDG} (\cite{DG},Theorem 3.35)  Suppose that $G$ is a toral relatively hyperbolic group, and that $\Lambda$ is a primary splitting of $G$. Then, every vertex group of $\Lambda$ is toral relatively hyperbolic, and its parabolic subgroups are the intersections of the parabolic subgroups of $G$ with the vertex group.
\end{prop}

%A  relatively quasiÐconvex subgroup $Q$ is called {\em strongly relatively quasiÐconvex} if the intersections of $Q$ with all conjugates of parabolic subgroups are finite (trivial in our case). 

\begin{prop} \label{propD} (\cite{bw}, Lemma 4.9 or Remark 2.4) In the proposition above the vertex groups embed
as  relatively quasi-convex subgroups in $G$, therefore they embed quasi-isometrically.  
%Moreover, QH subgroups embed as strongly relatively quasi-convex subgroups.
\end{prop}
Indeed,  edge groups in a primary splitting of $G$ are direct summands in maximal abelian subgroups, therefore they are relatively quasi-convex in $G$. By \cite{bw}, Lemma 4.9, vertex groups are also relatively quasi-convex. We can also apply \cite{bw}, Remark 2.4, where the family $\mathbb Q$ is the family of maximal abelian non-cyclic subgroups of $G$. 
%Obviously, QH subgroups do not intersect  any conjugate of a parabolic subgroup, therefore it is strongly relatively quasi-convex. 
By \cite{osin}, Theorem 4.13 they are quasi-isometrically embedded.

\begin{definition} We say that a representation  of a group as the fundamental group of a graph of groups is {\em relative} to a collection of subgroups $H_1,\ldots ,H_n$ or {\em modulo} a collection of subgroups if all subgroups in the collection are conjugate into vertex groups. In this case we say that a splitting is reduced modulo $H_1,\ldots ,H_n$ if it admits no edges carrying an amalgamation of the form  $A*_C C,$  such that $C$ does not contain a conjugate of any of the subgroups $H_1,\ldots ,H_n$. \end{definition}

\begin{prop} \label{prop6} There is an algorithm which takes a finite presentation for a 
 toral relatively hyperbolic group $G$ and finitely generated  subgroups $H_1,\ldots ,H_n$ of   $G$ as input, and outputs a graph
of groups which is a primary abelian JSJ decomposition for $G$ relative to $H_1,\ldots ,H_n$.\end{prop}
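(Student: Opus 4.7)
The plan is to start from the non-relative primary JSJ decomposition $\Lambda$ of $G$ produced by the previous proposition, and then to modify $\Lambda$ effectively until every $H_j$ becomes elliptic and the decomposition remains primary. The effective tools are already in place: by Proposition \ref{propD} the vertex groups of $\Lambda$ embed as relatively quasi-convex subgroups of $G$; by Proposition \ref{propDG} each vertex group is itself toral relatively hyperbolic with an effectively computable peripheral structure; and Proposition \ref{int} together with Corollary \ref{cy:conjugate-into} permit us to intersect relatively quasi-convex subgroups and to test conjugacy of one such subgroup into another.

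First, for each $H_j$ I use Corollary \ref{cy:conjugate-into} to decide whether $H_j$ is conjugate into some vertex group $V$ of $\Lambda$, and to produce a conjugator when it is; after conjugation I may assume $H_j \leq V$. If no such $V$ exists then $H_j$ acts non-trivially on the Bass--Serre tree of $\Lambda$, and I compute the induced abelian splitting of $H_j$ by tracking the action of a finite generating set on that tree; the edge stabilizers of this induced splitting are intersections of $H_j$ with conjugates of edge groups of $\Lambda$ and are therefore computable by Proposition \ref{int}. Using universality of the JSJ (any essential abelian splitting of $G$ can be obtained by refining the flexible vertices of $\Lambda$), I refine $\Lambda$ at its QH and abelian vertices so as to absorb this induced splitting, making $H_j$ elliptic in the refinement.

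Next, I make the $H_j$'s properly peripheral at the flexible vertices. At a QH vertex $V = \pi_1(\Sigma)$, each $H_j$ that falls into $V$ is realized as an explicit subgroup of $\pi_1(\Sigma)$; I apply the classical effective procedure for finding a system of disjoint essential simple closed curves on $\Sigma$ whose complementary subsurfaces carry the conjugates of the $H_j$'s as peripheral subgroups, and cut $\Sigma$ along this system to refine the splitting. At an abelian vertex $A$, the subgroup $A \cap H_j^g$ is computable via Proposition \ref{int}, and splitting $A$ along this intersection makes $H_j^g$ elliptic. Iterating over all flexible vertices and all $H_j$'s produces a primary abelian splitting $\Lambda'$ of $G$ in which each $H_j$ is elliptic; universality of the original JSJ then guarantees that $\Lambda'$ is a JSJ relative to $H_1, \ldots, H_n$.

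The main obstacle will be the effective QH refinement: given a compact surface $\Sigma$ with boundary and a finitely generated subgroup $H \leq \pi_1(\Sigma)$, one needs an algorithm that produces a system of disjoint simple closed curves on $\Sigma$ cutting it into subsurfaces for which $H$ is carried by a peripheral subgroup of one of them. Once this is handled, verifying that $\Lambda'$ is primary (essential cyclic edges, non-cyclic abelian subgroups elliptic) is routine using the word problem and the centralizer algorithm of Lemma \ref{Lem:AlgorithmsRelativelyHyperbolic}.
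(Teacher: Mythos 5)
Your proposal is based on a step that goes in the wrong direction.  You write that after computing the induced splitting of a non-elliptic $H_j$, you will ``refine $\Lambda$ at its QH and abelian vertices so as to absorb this induced splitting, making $H_j$ elliptic in the refinement.''  Refining a graph-of-groups decomposition can never turn a hyperbolic subgroup into an elliptic one: if $\hat\Lambda$ refines $\Lambda$, the Bass--Serre tree of $\hat\Lambda$ collapses onto that of $\Lambda$, and every vertex group of $\hat\Lambda$ is contained in a vertex group of $\Lambda$, so any subgroup elliptic in $\hat\Lambda$ is already elliptic in $\Lambda$.  Universality of the JSJ gives refinements that \emph{dominate} other splittings of $G$; it does not let you refine your way to ellipticity of a fixed subgroup.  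Passing from the absolute JSJ to a JSJ relative to $H_1,\ldots,H_n$ requires discarding (collapsing or forbidding) splittings in which some $H_j$ is hyperbolic, not adding new ones.  Because this core step fails, the whole iteration you describe does not produce a decomposition in which the $H_j$ are elliptic.

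By contrast, the paper's argument runs in the opposite direction.  It first builds an absolute primary JSJ $D$, then proves a lemma that for an elementary abelian splitting of a toral relatively hyperbolic group one can effectively decide whether a given finitely generated subgroup $H$ is elliptic and, if not, exhibit a hyperbolic element $h\in H$.  Whenever $H$ fails to be elliptic with respect to an edge of $D$ adjacent to a QH vertex $Q$, the paper adds the maximal cyclic subgroup $\langle h\rangle$ to the set of peripheral subgroups and recomputes the JSJ relative to this enlarged collection.  Since some canonical Dehn twist of $S_Q$ moves $h$, the new JSJ has strictly smaller regular size; a similar trick with edge groups of abelian vertices decreases the abelian rank $ab(D)$.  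This descent terminates and never requires solving the effective subsurface problem that you flag as your ``main obstacle'' and leave unresolved.  So even setting aside the direction error, your proposal hands off the hard part (given $H\leq\pi_1(\Sigma)$, find disjoint simple closed curves cutting $\Sigma$ so that $H$ is carried peripherally), whereas the paper replaces it with a size-decreasing induction driven by the relative-JSJ algorithm of \cite{DG}.
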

\begin{proof} We consider the case of $n=1$ because the general case can be proved similarly. We first construct a  primary JSJ decomposition $D$ of $G$ relative to the set of generators of $H$ (each generator is elliptic). This can be done using \cite{DG}.
We will need the following lemma.

\begin{lemma} Given a primary elementary abelian splitting of a freely indecomposable toral relatively hyperbolic group $G$ and its finitely generated subgroup
$H$, there is an algorithm to decide if $H$ is elliptic in this splitting and if it is not, to produce an element from $H$ which is hyperbolic.
\end{lemma}
\begin{proof} 
Suppose $G=A*_CB$ is an elementary abelian splitting. By the CSA property of toral relatively hyperbolic groups (see \cite{Gr}, Lemma 2.5), and \cite{GKM}, $C$ must be maximal abelian either in $A$ or in $B$. Each generator of $H$ can  be written in an amalgamated product normal form
as $h=a_1b_1\ldots a_kb_k$ with $a_i\in A, b_i\in B.$ Notice that every element that is conjugate to a cyclically reduced element in $G$ can be obtained from this element by a cyclic permutation post composed with conjugation by an element from $C$ (see, for example, \cite{LS}). Therefore if an element is conjugate into $A$, then the reduced form of some of its cyclic permutations belongs to $A$.
There is an algorithm to decide this because we only have to be able to solve the membership problem in $C$ for $A$ and $B$, and since $A$ and $B$ are toral relatively hyperbolic, and $C$ is relatively quasi-convex (as a direct summand of a maximal abelian subgroup), the membership problem in $C$ is decidable \cite{KMqc}.
 If none of the generators is conjugate into $A$ or $B$, then the lemma is proved.  Otherwise,   we can conjugate $H$ and suppose that the first (nontrivial) generator of $H$ belongs, say, to $A$.  If the first generator belongs to $A$, then $H$ is conjugate into $A$ if and only if all the other generators belong to $A$. 
 
HNN extensions can be considered similarly, this proves the lemma.
\end{proof}

By induction we can prove the following.
\begin{cor} Given a primary  abelian splitting of a freely indecomposable toral relatively hyperbolic group $G$ and its finitely generated subgroup
$H$, there is an algorithm to decide if $H$ is elliptic in this splitting and if it is not, to produce an element from $H$ which is hyperbolic.
\end{cor}

We can now finish the proof of the proposition.  Suppose $G$ is a non-trivial free product, and $H$ is not elliptic with respect to the free (Grushko) decomposition of $G$,  $G=G_1*\ldots *G_t*F_r$.  We can construct a hyperbolic cyclically reduced element $h\in H$. We construct a relative free decomposition  adding the maximal cyclic subgroup containing $h$ to the set of peripheral subgroups $G=\bar G_1*\ldots *\bar G_k*F_s$.  The number of  terms in the free decomposition decreases. Indeed, let $T$ be a Bass-Serre tree corresponding to the second decomposition. Then every $G_i$ is conjugate into some $\bar G_j$ (otherwise $G_i$ would be freely decomposable).  Moreover, if the normal form of $h$ contains syllables from $G_i$ and $G_k$, then $G_i$ and $G_k$ must be conjugate into the same subgroup $\bar G_j$. 
Therefore $k<t$. 

 We can repeat  checking if $H$ is elliptic in  the obtained free decomposition until we obtain a free decomposition of $G$ such that $H$ is elliptic.  

Now we can consider a freely indecomposable factor containing $H$. For simplicity we will assume  that $G$ is freely indecomposable. Let $D$ be a JSJ decomposition of $G$. If $H$ is not elliptic with respect to an elementary splitting corresponding to some edge connecting a rigid subgroup and a QH vertex group $Q$ (with the corresponding surface $S_Q$) of $D$, then  we can construct a hyperbolic element  $h\in H$.

We need to recall the following definition.
A  pair $(g, |\chi |)$ of  genus and  absolute value of the Euler characteristic of the surface corresponding to a QH subgroup $Q$ is called the {\em size} of $Q$ (and is denoted $size(Q)$).  A tuple $$size(D)= (size(Q_1),\ldots,size(Q_n))$$  of sizes of the MQH subgroups of the decomposition $D$ of a freely indecomposable group in decreasing order (defined in Section 2.3, \cite{KMel}),   is called the {em regular size} of this decomposition.  We compare sizes left lexicographically.

Following \cite{DG} we construct a relative primary JSJ decomposition $D_h$ adding the maximal cyclic subgroup containing $h$ to the set of peripheral subgroups. All elementary splittings of $G$ corresponding to $D_h$ are also elementary splittings corresponding to $D$, but since $\langle h\rangle $ is elliptic in $D_h$ and hyperbolic in $D$, there are strictly less
elementary  splittings corresponding to $D_h$ than to $D$.  Moreover, not all elementary splittings corresponding to simple closed curves on $Q$ and edges from $Q$ are splittings of $D_h$. Indeed, application of some of the canonical Dehn twists of $S_Q$ (corresponding to essential simple closed curves on $S_Q$) would change $h$. Since generators of the Mapping class group of $S_Q$ are Dehn twists along  particular non-separating simple  closed curves on $S_Q$, (see \cite{L}, \cite{H}), some of   these simple closed curves either do not belong to sub-surfaces of $S_Q$ corresponding to QH subgroups of $D_h$, or are not essential.  Therefore these are proper sub-surfaces, and the size(s) of  the QH subgroup(s) in $D_h$ that replace QH-subgroup $Q$ (if any) is strictly less than $size(Q)$. Then  $size (D_h)<size (D).$ 

We now check if $H$ is elliptic in the quadratic decomposition corresponding to $D_h$ (quadratic decomposition is obtained from $D_h$ by collapsing all the edges between non-QH subgroups). If $H$ is not elliptic, we find a hyperbolic element $h_1$ and construct a decomposition $D_{h,h_1}$ adding the maximal cyclic subgroup containing $h_1$ to the set of peripheral subgroups.
Similarly we have $size(D_{h,h_1})< size(D_h)$.
 Since the regular size cannot decrease infinitely, eventually we will obtain a splitting $D_Q$ such that MQH subgroups of this splitting are exactly MQH subgroups of the JSJ decomposition relative to $H$.  

We can now check if $H$ is elliptic in the decomposition obtained from $D_Q$ by collapsing all the edges between non-abelian subgroups. We then increase if necessary the edge groups between rigid and abelian subgroups. This operation  decreases the abelian rank $ab(D)$ (the sum of ranks of the abelian vertex groups minus the sum of ranks of the edge groups between abelian and rigid subgroups). Again, we cannot decrease the abelian rank infinitely, so eventually we get a decomposition $D_{Q,A}$ of $G$ such that $H$ is elliptic with respect to all QH and abelian subgroups of this decomposition.

We now  collapse those edges between rigid subgroups of $D_{Q,A}$ that correspond to elementary splittings of $G$  for which $H$ is not elliptic.  The obtained decomposition will be automatically reduced  (in relative sense), essential, and, since maximal abelian subgroups are elliptic, primary.   
\end{proof}

\section{Effective description of homomorphisms to $\Gamma$}

In this section, following \cite{KMac} and \cite{KMel} we describe an algorithm that takes as input a finite system
of equations $S$ over a torsion-free hyperbolic group $\Gamma$ and produces a tree diagram $\mathcal{T}$ that encodes the set
$\Hom_{\Gamma}(\Gamma_{R(S)},\Gamma)$.  When $S$ is a system without coefficients, we interpret $S$ as relators for a finitely presented group
$G=\langle X|S\rangle$ and the diagram $\mathcal{T}$ encodes instead the set $\Hom(G,\Gamma)$.

There are two ingredients in this construction: first,
the reduction of the system $S$ over $\Gamma$ to finitely many systems of equations over free groups, and second, the construction of Hom-diagrams (Makanin-Razborov diagrams)
for systems of equations over free groups.

Fix $\Gamma=\GammaPresentation$  a finitely presented torsion-free hyperbolic group, $F$ the free group on $A$,
and $\pi:F\rightarrow \Gamma$ the
canonical epimorphism.

Denote $F[X]=F(X)\ast F(A),\ \Gamma [X]=F(X)\ast \Gamma .$ The map $\pi$ induces an epimorphism $F[X]\rightarrow \Gamma[X]$, also denoted $\pi$, by fixing each $x\in X$.  For a system of equations
$S\subset F[X]$, we study the corresponding system $S^{\pi}\subset\Gamma[X]$ which we  denote again by $S$.
The radical of $S$ over $\Gamma$ (the normal subgroup that consists of all elements of $\Gamma [X]$ that are sent to the identity by all solutions of $S$) will be denoted $R{(S)}$.  

The coordinate group is defined as $\Gamma_{R(S)}=\Gamma[X] / R(S)$, where $X$ is precisely the set of variables appearing
in $S$.

%Notice that the relators $\mathcal{R}$ of $\Gamma$ are in the radical $R_{\Gamma}(S)$ for \emph{every} system of equations $S$, hence
%\[
%F_{R_{\Gamma}(S)} = \Gamma_{R(S)}.
%\]

Let $\overline{\phantom{c}}$ denote the canonical epimorphism $F(X,A)\rightarrow \Gamma_{R(S)}$.
For a homomorphism $\phi: F(X,A)\rightarrow K$ we define $\overline{\phi}: \Gamma_{R(S)} \rightarrow K$
by
\[
\overline{\phi}\big(\overline{w}\big) =  \phi(w),
\]
where any preimage $w$ of $\overline{w}$ may be used.  We will always ensure that $\overline{\phi}$ is a well-defined homomorphism.

%========================================================================================================================
% 							Canonical representatives
%========================================================================================================================

\subsection{Reduction to generalized equations over free groups}
In \cite{RS95}, the problem of deciding whether or not a system of equations $S$ over a torsion-free hyperbolic group $\Gamma$ has a solution was solved by constructing
\emph{canonical representatives} for certain elements of $\Gamma$. This construction reduced the problem to deciding the existence of solutions in finitely many
systems of equations over free groups, which had been previously solved.  The reduction may also be used to find all solutions to $S$ over $\Gamma$, as described
below. We will use exponential notation for composition of homomorphisms. The notion of a {\em generalized equation} can be found in \cite{Imp}.

\begin{lemma}\label{Lem:RipsSela1}
Let $\Gamma=\GammaPresentation$ be a torsion-free $\delta$-hyperbolic group and $\pi : F(A)\rightarrow \Gamma$ the canonical epimorphism.  There
is an algorithm that, given a system $S(Z,A)=1$
of equations over $\Gamma$, produces finitely many generalized equations

\begin{equation}\label{eq:1}
S_{1} (X_{1},A),\ldots,S_{n}(X_{n},A)
\end{equation}
over $F$ (each $S_{1} (X_{1},A)$ can be also considered as  a system of equations $S_{1} (X_{1},A)=1$ over $F$),
constants $\lambda, \mu > 0$, and homomorphisms $\rho_{i}: F(Z,A)\rightarrow F_{R(S_{i})}$ for $i=1,\ldots,n$
such that
\begin{romanenumerate}
\item for every $F$-homomorphism $\phi : F_{R(S_{i})}\rightarrow F$,  the map $\overline{\rho_{i}\phi\pi}:\Gamma_{R(S)}\rightarrow \Gamma$ is a $\Gamma$-homomorphism, and
\item for every $\Gamma$-homomorphism $\psi: \Gamma_{R(S)}\rightarrow \Gamma$ there is an integer $i$ and an $F$-homomorphism
$\phi : F_{R(S_{i})}\rightarrow F(A)$ such that $\overline{\rho_{i}\phi\pi}=\psi$. Moreover, for any $z\in Z$, the word $z^{\rho _i\phi} $ labels a $(\lambda,\mu)$-quasigeodesic path for $z^{\psi} $ in $\Gamma$, and $\phi$ is a solution of 
the generalized equation $F_{R(S_{i})}$.
\end{romanenumerate}
Further, if $S(Z)=1$ is a system without coefficients, the above holds with $G=\GPresentation$ in place of $\Gamma_{R(S)}$ and `homomorphism' in place of
`$\Gamma$-homomorphism'.
\end{lemma}

\begin{proof}
The result is an easy corollary of Theorem~4.5 of \cite{RS95}, but we will provide a few details.

We may assume that the system $S(Z,A)$, in variables $z_{1},\ldots,z_{l}$, consists of $r$ constant equations and $q-r$ triangular equations, i.e.
\[
S(Z,A) = \braced{z_{\sigma(j,1)}z_{\sigma(j,2)}z_{\sigma(j,3)}=1}{j=1,\ldots,q-r}{z_{s}  =  a_{s}}{s=l-r+1,\ldots,l}
\]
%\begin{eqnarray*}
%z_{\sigma(j,1)}z_{\sigma(j,2)}z_{\sigma(j,3)} & = & 1, \;\;\;\;\; j=1,\ldots,q-m\\
%z_{s} & = & a_{s}, \;\;\;\; s=l-c+1,\ldots,l
%\end{eqnarray*}
where $\sigma(j,k)\in\{1,\ldots,l\}$ and $a_{i}\in\Gamma$.  A construction is described in \cite{RS95}
which, for every $m\in\naturals$, assigns to each element $g\in \Gamma$ a word $\theta_{m}(g)\in F$ satisfying
\[
\theta_{m}(g)=g \mbox{ in } \Gamma
\]
called its \emph{canonical representative}.  The representatives $\theta_{m}(g)$ are not `global canonical representatives', but do satisfy
useful properties for certain $m$ and certain finite subsets of $\Gamma$, as follows.

Let\footnote{The constant of hyperbolicity $\delta$ may be computed from a presentation of $\Gamma$ using the results of \cite{Papa}.} $L=q\cdot 2^{5050(\delta+1)^{6}(2|A|)^{2\delta}}$.
Suppose $\psi: F(Z,A)\rightarrow \Gamma$ is a solution of $S(Z,A)$ and denote
\[
\psi(z_{\sigma(j,k)})=g_{\sigma(j,k)}.
\]
Then there exist
$h_{k}^{(j)}, c_{k}^{(j)}\in F(A)$ (for $j=1,\ldots,q-r$ and $k=1,2,3$) such that
\begin{romanenumerate}
\item each $c_{k}^{(j)}$ has length less than\footnote{The bound of $L$ here, and below, is extremely loose.  Somewhat tighter, and more intuitive, bounds are given in \cite{RS95}.} $L$
 (as a word in $F(A)$), \label{RepsCond1}
\item $c_{1}^{(j)}c_{2}^{(j)}c_{3}^{(j)}  =  1$ in $\Gamma$, \label{RepsCond2}
\item there exists $m\leq L$\ such that the canonical representatives satisfy the following equations in $F$:\label{RepsCond3}
\begin{eqnarray}
\theta_{m} (g_{\sigma(j,1)}) & = & h_{1}^{(j)} c_{1}^{(j)} \left(h_{2}^{(j)}\right)^{-1} \label{CanonReps1}\\
\theta_{m} (g_{\sigma(j,2)}) & = & h_{2}^{(j)} c_{2}^{(j)} \left(h_{3}^{(j)}\right)^{-1}\\
\theta_{m} (g_{\sigma(j,3)}) & = & h_{3}^{(j)} c_{3}^{(j)} \left(h_{1}^{(j)}\right)^{-1}.\label{CanonReps3}
\end{eqnarray}
\end{romanenumerate}
In particular, when $\sigma(j,k)=\sigma(j',k')$ (which corresponds to two occurrences in $S$ of the variable $z_{\sigma(j,k)}$) we have
\begin{equation}
h_{k}^{(j)} c_{k}^{(j)} \left(h_{k+1}^{(j)}\right)^{-1} = h_{k'}^{(j')} c_{k'}^{(j')} \left(h_{k'+1}^{(j')}\right)^{-1}.\label{Hequality}
\end{equation}

Moreover, $\theta_{m} (g_{\sigma(j,i)})$ labels a $(\lambda,\mu)$-quasigeodesic. Consequently, we construct the systems $S(X_{i},A)$ as follows.
For every $m$ and every choice of $3(q-r)$ elements $c_{1}^{(j)},c_{2}^{(j)},c_{3}^{(j)}\in F$  ($j=1,\ldots,q-r$)
satisfying (i) and (ii)\footnote{The word problem in hyperbolic groups is decidable.}
we build a system $S(X_{i},A)$
consisting of the equations
\begin{eqnarray}
x_k^{(j)}c_{k}^{(j)}\left(x_{k+1}^{(j)}\right)^{-1} & = & x_{k'}^{(j')}c_{k'}^{(j')}\left(x_{k'+1}^{(j')}\right)^{-1} \label{Eqn:SC1}\\
x_k^{(j)}c_{k}^{(j)}\left(x_{k+1}^{(j)}\right)^{-1} & = & \theta_{m}(a_s) \label{Eqn:SC2}
\end{eqnarray}
where an equation of type   (\ref{Eqn:SC1}) is included whenever  $\sigma(j,k)=\sigma(j',k')$ and an equation of type (\ref{Eqn:SC2}) is included whenever
$\sigma(j,k)=s\in\{l-r+1,\ldots,l\}$.  To define $\rho_{i}$, set
\[
\rho_i (z_s) = \bracedTwo{x_k^{(j)}c_{k}^{(j)}\left(x_{k+1}^{(j)}\right)^{-1},}{1\leq s \leq l-r \mbox{ and } s=\sigma(j,k)}{ \theta_{m}(a_s),}{l-r+1\leq s \leq l}
\]
where for $1\leq s \leq l-r$ any $j,k$ with $\sigma(j,k)=s$ may be used.

If $\psi: \Gamma _{R(S)}\rightarrow \Gamma$ is any solution to $S(Z,A)=1$, there is a system $S(X_{i},A)$ such that $\theta_{m}(g_{\sigma(j,k)})$ satisfy
(\ref{CanonReps1})-(\ref{CanonReps3}).  Then the required solution $\phi$ is given by
\[
\phi\big(x_{j}^{(k)}\big) = h_{j}^{(k)}.
\]
Indeed, (iii) implies that $\phi$ is a solution to $S(X_{i},A)=1$.  For $s=\sigma(j,k)\in\{1,\ldots,l-r\}$,
\[
z_{s}^{\rho_{i}\phi} = h_{k}^{(j)} c_{k}^{(j)} \left(h_{k+1}^{(j)}\right)^{-1} = \theta_{m}(g_{\sigma(j,k)})
\]
and similarly for $s\in\{l-r+1,\ldots,l\}$, hence $\psi= \rho_{i}\phi\pi$.

Conversely, for any solution $\phi\big(x_{j}^{(k)}\big)= h_{j}^{(k)}$ of $S(X_{i}, A)=1$ one sees that by (\ref{Eqn:SC1}),
\[
z_{\sigma(j,1)}z_{\sigma(j,2)}z_{\sigma(j,3)} \rightarrow^{\rho_{i}\phi} h_{1}^{(j)} c_{1}^{(j)}c_{2}^{(j)}c_{3}^{(j)} \big(h_{1}^{(j)}\big)^{-1}
\]
which maps to 1 under $\pi$ by (ii), hence $\rho_{i}\phi\pi$ induces a homomorphism.
\end{proof}

%===================================================================================================================
% 						Hom diagrams
%===================================================================================================================

\subsection{Encoding solutions with the trees $\mathcal{T}$ and ${\mathcal T}(S,\Gamma )$}\label{section:HomDiagrams}
In this section we describe how to encode solutions to equations over $\Gamma$ using a $Hom$-diagram. We begin by describing a $Hom$-diagram for generalized equations \cite{Imp} over free groups. There is an algorithm described in \cite{Imp} that, given a generalized equation $\Omega(X,A)$ over the free group $F=F(A)$, constructs %explain canonicity?
a diagram, which encodes the solutions of $\Omega$. Let $G$ be the coordinate group $F_{R(\Omega)}$ of $\Omega$ considered as a system of equations over $F$. Specifically, the algorithm constructs a directed, finite, rooted tree $T$ that has the following properties:
\begin{romanenumerate}
\item Each vertex $v$ of $T$ is labelled by a pair $(G_v,Q_v)$, where $G_v$ is an $F$-quotient of $G$ and $Q_v$ the subgroup of canonical automorphisms in $Aut_F(G_v)$ corresponding to a splitting of $G_v$ as a fundamental group of a graph of groups, that we find from the Elimination process of $\Omega$. The root $v_o$ is labelled by $(F_{R(\Omega)},1)$ and every leaf is labelled by $(F(Y)*F(A),1)$ %why canon autom subgrp trivial?
where $Y$ is some finite set (called \emph{free variables}). Each $G_v$, except possibly $G_{v_0}$, is fully residually $F$.
\item Every (directed) edge $v\rightarrow v'$ is labelled by a proper surjective\\$F$-homomorphism $\pi(v,v'):G_v\rightarrow G_{v'}$.
\item For every $\phi\in Hom_F(F_{R(\Omega)},F)$, that is a solution of $\Omega$ (that must be non-cancellable in $F$) there is a path $p=v_0v_1\ldots v_k$, where $v_k$ is a leaf labelled by $(F(Y)*F(A),1)$, elements $\sigma_i\in Q_{v_i}$, and a $F$-homomorphism $\phi_0:F(Y)*F(A)\rightarrow F(A)$ such that
\begin{equation}\label{*}\phi=\pi(v_0,v_1)\sigma_1\pi(v_1,v_2)\sigma_2\ldots\pi(v_{k-2},v_{k-1})\sigma_{k-1}\pi(v_{k-1},v_k)\phi_0\end{equation}
Considering all such $F$-homomorphisms $\phi_o$,  the family of the above sequences of homomorphisms   is called the \emph{fundamental sequence} over $F$ corresponding to $p$.

Considering all such $F$-homomorphisms $\phi_o$ that produce solutions of $\Omega$ the family of the solutions of $\Omega$ factoring through the above fundamental sequence is called the \emph{fundamental sequence for  the generalized equation} $\Omega$ over $F$ corresponding to $p$.
\item  The  splitting of each fully residually free group $G_v$ is its Grushko decomposition followed by the abelian splittings of the factors that are found by the elimination process.  If $C_{vi}$ is such a factor, then the splitting is not necessarily the JSJ decomposition of $C_{vi}$, but it is maximal in a sense that it encodes all elementary abelian splittings of $C_{vi}$ that can be found by the elimination process, and has maximal QH and abelian vertex groups that can be found by the elimination process for $G_v$.

\end{romanenumerate}

Notice that not all the homomorphisms that factor through (\ref{*}) are solutions of $\Omega$, but they all are homomorphisms from $F_{R(\Omega)}$ to $F$.

Let $S(Z,A)=1$ be a finite system of equations over $\Gamma$.  We will construct a diagram $\mathcal{T}$ to encode the set of solutions of $S(Z,A)=1$. Namely, we  will construct a tree of  fundamental sequences encoding all solutions of a system $S(Z,A)=1$ of equations over $\Gamma$ using the tree of fundamental sequences for `covering' systems of generalized equations over $F$ constructed in Lemma~\ref{Lem:RipsSela1}.

We apply Lemma~\ref{Lem:RipsSela1} and construct the generalized equations $S_{1}(X_{1},A),\ldots,S_{n}(X_{n},A)$.
We create a root vertex $v_{0}$ labelled by $F(Z,A)$.  For each of the generalized equations $S_{i}(X_{i},A)$, let $T_{i}$
be the tree constructed above. Build an edge from $v_{0}$ to the root of $T_{i}$ labelled by the homomorphism $\rho_{i}$.  For each leaf $v$ of $T_{i}$, labelled by $F(Y)\ast F$, build a new vertex $w$ labelled by $F(Y)\ast\Gamma$ and an edge $v\rightarrow w$ labelled by the homomorphism
$\pi_{Y}:F(Y)\ast F\rightarrow F(Y)\ast \Gamma$ which is induced from $\pi:F\rightarrow \Gamma$ by acting as the identity on $F(Y)$.

Define a \emph{branch} $b$ of $\mathcal{T}$ to be a path $b=v_{0} v_{1} \ldots v_{k}$ from the root $v_{0}$ to a leaf $v_{k}$.
Let $v_{1}$ be labelled by $F_{R(S_{i})}$ and $v_{k}$ by $F(Y)\ast \Gamma$.
We associate to $b$ the set $\Phi_{b}$ consisting of all homomorphisms $F(Z, A)\rightarrow \Gamma$ of the form
\begin{equation}\label{9}
\rho_{i}\pi(v_{1},v_{2}) \sigma_{2} \cdots \pi(v_{k-2},v_{k-1})\sigma_{k-1}\pi(v_{k-1},v_{k})\pi_{Y}\psi ,
\end{equation}
where $$\rho_{i}\pi(v_{1},v_{2}) \sigma_{2} \cdots \pi(v_{k-2},v_{k-1})\sigma_{k-1}\pi(v_{k-1},v_{k})$$ is a solution of the generalized equation and
 $\sigma_{j}\in Q_{v_{j}}$ and $\psi\in \mathrm{Hom}_{\Gamma}(F(Y)\ast\Gamma,\Gamma)$.
Since $\mathrm{Hom}_{\Gamma}(F(Y)\ast\Gamma,\Gamma)$ is in bijective correspondence with the set of functions $\Gamma^{Y}$, all elements of $\Phi_{b}$
can be effectively constructed.  We have obtained the following result.

\begin{prop}\cite{KMac} \label{Thm:EffectiveSolutions}
There is an algorithm that, given a system $S(Z,A)=1$ of equations over $\Gamma$, produces a diagram encoding its set of solutions.  Specifically,
\[
\Hom_{\Gamma}(\Gamma_{R(S)},\Gamma) = \{ {\phi} \sst \phi\in\Phi_{b}\cs \mbox{$b$ is a branch of $\mathcal{T}$}\}
\]
where $\mathcal{T}$ is the diagram described above.  When the system is coefficient-free, then the diagram encodes $\Hom(G, \Gamma)$ where
$G=\GPresentation$.
\end{prop}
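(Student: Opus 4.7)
The construction of $\mathcal{T}$ is already described above the statement, so the plan is to verify two things: (a) that the construction is effective, and (b) that the set of solutions is exactly what is claimed. The strategy is to combine Lemma~\ref{Lem:RipsSela1} (the Rips--Sela canonical representatives reduction) with the Makanin--Razborov-style tree of \cite{KMel} for systems over the free group. Lemma~\ref{Lem:RipsSela1} is effective and produces finitely many systems $S_i(X_i,A)=1$ over $F$ together with the homomorphisms $\rho_i$; the KMel algorithm is effective on each $S_i$ and gives the trees $T_i$ with properties (i)--(iv); splicing them at a common root via the $\rho_i$ and capping each leaf by $\pi_Y$ produces $\mathcal{T}$ as a finite object with explicit presentations of all $G_v$ and explicit generators of all $Q_v$.

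For the inclusion $\supseteq$, fix a branch $b = v_0 v_1 \cdots v_k$ with $v_1$ labelled by $F_{R(S_i)}$ and a map $\phi \in \Phi_b$ of the form $\rho_i\,\pi(v_1,v_2)\sigma_2\cdots\pi(v_{k-1},v_k)\pi_Y\phi'$. The composition $\pi(v_1,v_2)\sigma_2\cdots\pi(v_{k-1},v_k)\pi_Y\phi'$ is, by property (iii) of the KMel tree, an $F$-homomorphism $F_{R(S_i)} \to \Gamma$ of the form (postcompose an $F$-homomorphism to $F$) composed with $\pi$. Hence $\rho_i\phi\pi$ fits exactly the hypothesis of Lemma~\ref{Lem:RipsSela1}(i), so $\overline{\rho_i\phi\pi}$ descends to a well-defined $\Gamma$-homomorphism $\Gamma_{R(S)} \to \Gamma$. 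This uses the identity $F_{R_\Gamma(S)} = \Gamma_{R(S)}$ noted in the excerpt, which guarantees that the relators of $\Gamma$ vanish automatically.

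For the inclusion $\subseteq$, take $\psi : \Gamma_{R(S)} \to \Gamma$. By Lemma~\ref{Lem:RipsSela1}(ii) there exist $i$ and an $F$-homomorphism $\phi_F : F_{R(S_i)} \to F(A)$ with $\psi = \overline{\rho_i\phi_F\pi}$. Applying property (iii) of $T_i$ to $\phi_F$ factors it as $\pi(v_1,v_2)\sigma_2\cdots\pi(v_{k-1},v_k)\phi_0$ along a branch of $T_i$, where $\phi_0 : F(Y)\ast F \to F$. Define $\phi' : F(Y)\ast \Gamma \to \Gamma$ to act as $\pi\circ\phi_0$ on $Y$ and as the identity on $\Gamma$; then $\phi_0\pi = \pi_Y\phi'$, and tracing the extended branch of $\mathcal{T}$ exhibits $\psi$ as $\overline{\phi}$ for the corresponding $\phi \in \Phi_b$.

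The main obstacle is really hidden in the two cited tools, not in the present verification: the canonical representatives of \cite{RS95} supply the essential bridge between solutions over $\Gamma$ and solutions over $F$, and the KMel construction provides the Hom-tree over $F$. Once these are granted, the remaining work is a routine bookkeeping check that gluing along $\rho_i$ and $\pi_Y$ is compatible with both directions of the Hom-set correspondence, and that every $\phi\in\Phi_b$ genuinely induces a homomorphism on $\Gamma_{R(S)}$ (which, as noted, is automatic).
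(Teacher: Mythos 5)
Your proof is correct and follows essentially the same route the paper (implicitly, via the cited construction) takes: verify that the tree $\mathcal{T}$ built from Lemma~\ref{Lem:RipsSela1} and the Hom-diagrams over $F$ yields exactly $\Hom(\Gamma_{R(S)},\Gamma)$ by checking both inclusions. One small correction: in the $\supseteq$ direction, the fact that $\pi(v_1,v_2)\sigma_2\cdots\pi(v_{k-1},v_k)\pi_Y\phi'$ factors as an $F$-homomorphism $F_{R(S_i)}\to F$ followed by $\pi$ is a consequence of the surjectivity of $\pi:F\to\Gamma$ (choose preimages of $\phi'(Y)$ in $F$ to lift $\pi_Y\phi'$ to $\phi_0:F(Y)\ast F\to F$ with $\phi_0\pi=\pi_Y\phi'$), not of property~(iii); property~(iii) is what you correctly invoke in the $\subseteq$ direction.
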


Note that in the diagram $\mathcal{T}$, the groups $G_{v}$ appearing at vertices are not quotients of the coordinate group $\Gamma_{R(S)}$ and
that to obtain a homomorphism from $\Gamma_{R(S)}$ to $\Gamma$ one must compose maps along a complete path ending at a leaf of $\mathcal{T}$.  In \cite{Gro05} it
is shown that for any toral relatively hyperbolic group there exist Hom-diagrams with the property that every group $G_{v}$ is a quotient of
$\Gamma_{R(S)}$ and that every edge map $\pi(v,v')$ is a
proper surjective homomorphism.

\begin{definition}
A {\em fundamental sequence} or a {\em fundamental set} of homomorphisms over $\Gamma$ corresponding to the diagram $$\Gamma_{R(S)}\rightarrow _{\pi_0} G_1\rightarrow _{\pi_1}  G_2\rightarrow\ldots \rightarrow _{\pi_{n-1}}  G_n=F\ast \Gamma \ast H_1\ast\ldots \ast H_k$$

where
\begin{enumerate}
\item $H_1,\ldots , H_k$ are freely indecomposable groups isomorphic to subgroups of $\Gamma$,  and $G_1,\ldots ,G_n$ are $\Gamma$-limit groups.
 
\item $\pi_i, 0<i<n-1$ are fixed proper epimorphisms,  $\pi _{n-1}$ is an epimorphism but  may not be proper.
 
\item The homomorphisms in this sequence are compositions $\pi_0\sigma_1\pi _1\ldots\sigma _{n-1}\pi _{n-1}\tau$, where $\sigma _i$ is a canonical automorphism of $G_i$ corresponding to a Grushko decomposition of $G_i$ followed by some abelian decompositions of the freely indecomposable factors where  all non-cyclic abelian subgroups are elliptic.
Canonical automorphisms are identical on the free factor of this Grushko decomposition.
 
\item  $\tau$ is a homomorphism that maps each $H_i$ monomorphically  into a conjugate of a fixed subgroup of $\Gamma$ (and for each $H_i$ it is a fixed monomorphism followed by a conjugation) and maps $F$ into $\Gamma$.
\end{enumerate}
\end{definition}\begin{definition}
\begin{enumerate} A fundamental sequence defined above is called {\em strict}  if it has the following properties:
\item The image of each non-abelian vertex group of $G_i$ under $\pi _i$ is non-abelian.
\item For each $1\leq i< n$, $\pi_i$ is injective on rigid subgroups, edge groups, and subgroups generated by the images of edge groups in abelian vertex groups in $G_{i-1}$.
\item For each $1\leq i <n $, if $R$ is a rigid subgroup in $G_{i}$ and $\{A_j\}$, $1\leq j\leq m$, the abelian vertex groups in $G_{i}$ connected to $R$ by edge groups $E_j$ with the maps $\eta_j:E_j\rightarrow A_j$, then $\pi_i$ is injective on the subgroup $\langle R,\eta_1(E_1),\ldots,\eta_m(E_m)\rangle$ which we will call the {\em envelop of $R$}.
\item The images of different factors in the Grushko decomposition of $G_i$ under $\pi _i$  are different factors in the free decomposition of $G_{i+1}$. 
\end{enumerate}
\end{definition}

\begin{prop} \label{prop7} \label{prop:7} There is an algorithm to replace each family of homomorphisms $\Phi _b$ constructed in Proposition \ref{Thm:EffectiveSolutions} with an NTQ group corresponding to a strict fundamental sequence over $\Gamma$.  \end{prop}

\begin{remark} 
In Sela's terminology \cite{Sela1}, fundamental sequences are called {\em resolutions} and fundamental sequences with these properties that correspond to canonical Makanin-Razborov diagram are called {\em strict resolutions}.  In \cite{Sela}, in the proof of Theorem 7.1 he either uses this term differently or makes some erroneous statements. In \cite{KMel} we used  fundamental sequences with the properties 1-2, but we should have  had also property 3.  We will borrow the term `strict'  for fundamental sequences with the  properties 1-4.
\end{remark}

\begin{proof}  The proposition  was proved in  \cite{KMac}, Section 3.3  (embeddable NTQ system in \cite{KMac}  is a system satisfying  the conditions of the proposition).
We will briefly recall the proof of this proposition here.

The terminal group of each fundamental sequence 
\begin{equation}
\rho_{i}\pi(v_{1},v_{2}) \sigma_{2} \cdots \pi(v_{k-2},v_{k-1})\sigma_{k-1}\pi(v_{k-1},v_{k})\end{equation}
is $G_v=F(Y)*F$.  Applying the homomorphism $\pi_{Y}:F(Y)*F\rightarrow F(Y)*\Gamma$, we
 replace it with $F(Y)*\Gamma$. Then we change the next from the bottom level of the fundamental sequence, the group $G_{v_{k-1}}$ and replace the epimorphism $\pi (v_{k-1},v_k)$ by   $\pi (v_{k-1},v_k)\pi _Y.$
 If, after we replaced $G_v$ with $F(Y)*\Gamma$, there are no collapses of the abelian JSJ decomposition $D_{k-1}$ of $G_{v_{k-1}}$  (situations when one of the conditions in Proposition \ref{prop:7} breaks),  then we construct  two-level canonical NTQ group with the bottom level $F(Y)*\Gamma$, and with all QH and abelian vertex groups corresponding to the second level from the bottom  the same as for the group $G_{v_{k-1}}$. Canonical automorphisms corresponding to these QH and abelian vertex groups will be the same as they are for  $G_{v_{k-1}}$,  Dehn twists  associated to the edges of the decomposition of the new group will correspond to the Dehn twists associated to the edges of $D_{k-1}$.  Notice, that we just changed one vertex group replacing $F(A)$ by $\Gamma$ but this does not change other vertex groups,

 We now consider all possible collapses of the abelian JSJ decomposition $D_{k-1}$ of $G_{v_{k-1}}:$
\begin{enumerate}
\item an edge group in $D_{k-1}$ is mapped by $\pi (v_{k-1},v_k)\phi$ to the trivial group,  in particular, some of the boundary elements of a MQH subgroup are mapped to a trivial group,
\item some MQH subgroup is mapped to an abelian group,

\item some non-abelian vertex group is mapped to an abelian group.
\end{enumerate}

Suppose there are collapses.   We first collapse all QH subgroups that are mapped into the identity and adjacent edges. We replace all QH vertex groups that are mapped to non-trivial abelian subgroups with the centralizers of their images. 
Suppose $Q$ is an MQH subgroup that is mapped to a non-abelian group. We replace by the identity all the boundary elements $p_i$ that are mapped to the identity.  We remove then from the graph all the edges that are connected to this MQH vertex group and such that edge group is mapped into the identity. This is equivalent to filling with disks the corresponding boundary components of the surface $S_Q$, and remove the corresponding edge. The group of automorphisms associated with this MQH subgroup will also be changed, it will be generated by Dehn twists corresponding to simple closed curves on the new surface.  If the obtained subgroup is not a QH subgroup anymore we consider it as a rigid subgroup (in case it is given by the relations $x^2y^2=1, x^2c_1^zc_2=1$ or $c_1^{z_1}c_2^{z_2}c_3^{z_3}=1$) or as an HNN extension of a rigid subgroup (in case $x^2y^2c^z=1$).  If we  remove an edge corresponding to a stable letter, then we add the cyclic subgroup generated by a stable letter as a free factor.

For each edge  between any two non-QH vertex groups for which the edge group is mapped to the trivial element we collapse this edge so that two vertices become one and replace the obtained new vertex group by its image in $F(Y)*\Gamma$.  We collapse  those abelian  subgroups that are mapped to the trivial element. We collapse the edges between those non-QH non-abelian subgroups that are mapped to the abelian subgroup and are connected by these edges to other rigid subgroups. 

 For each connected component of the obtained graph of groups that does not contain $\Gamma$ we add a new letter so that the fundamental group of that connected component is mapped into the  conjugate of $F(Y)*\Gamma$ by this new letter.  At the bottom we have a free product of a free group, group $\Gamma$ and some conjugates of $\Gamma$. With the obtained decomposition we associate the $\Gamma$-NTQ group in a standard way (by taking the iterated free extension of centralizers of the images of the edge groups in the bottom group (and consecutive groups) by new letters, and joining together some abelian vertex groups to make the group commutative transitive.  This NTQ group is toral relatively hyperbolic by \cite{D}.

We can continue this way and change consecutively all the higher levels of the fundamental sequence. On each level for each connected component of the obtained graph of groups that does not contain $\Gamma$ we add a new generator so that the fundamental group of this connected component is mapped into $\Gamma*F(Y)$ conjugated by the new generator. This completes the construction of fundamental sequences and corresponding NTQ groups over $\Gamma$.

The construction is algorithmic, because all what is needed is to solve the word problem in the group on the lower level, that is, by induction,  toral relatively hyperbolic. 
\end{proof}

We denote by ${\mathcal T} (S, \Gamma)$ the constructed tree of strict fundamental sequences over $\Gamma$.  We just proved the following:

\begin{prop}\label{KMac} Each  branch of ${\mathcal T} (S, \Gamma)$ has the following form $$\Gamma_{R(S)}\rightarrow _{\psi_0} L_1\rightarrow _{\psi_1}  L_2\rightarrow\ldots \rightarrow _{\psi_{n-1}}  L_n=F\ast \Gamma \ast K_1\ast\ldots \ast K_k,$$  where

1) $K_1,\ldots , K_k$ are freely indecomposable groups isomorphic to freely indecomposable factors   in the Grushko decomposition   of $\Gamma$, 

2) $L_1,\ldots ,L_n$ are NTQ $\Gamma$-limit groups, $\psi _0$ is a fixed homomorphism; $\psi_i$ for $0<i\leq n-1$ is a fixed proper epimorphism that is  retraction on $L_{i+1}$, 

3) There is a strict fundamental sequence assigned to each branch. The homomorphisms in this sequence are compositions $\psi _0\sigma_1\psi _1\ldots\sigma _{n-1}\psi _{n-1}\tau ,$ where $\sigma _i$ is a canonical automorphism of $L_i$ with respect to the cyclic splitting where non-QH non-abelian subgroups are factors in the Grushko decomposition of $L_{i+1}$, and $\tau$ is a homomorphism that maps each $K_i$ monomorphically onto a conjugate of the corresponding subgroup of $\Gamma$ (and for each $K_i$ it is a fixed monomorphism followed by a conjugation), and maps $F$ into $\Gamma$.

Every homomorphism from $\Gamma _{R(S)}$ to $\Gamma$ factors through one of the fundamental sequences corresponding to the branches of ${\mathcal T} (S, \Gamma)$.  Factors in the Grushko decomposition of $L_i$ are mapped into different factors in the Grushko decomposition of $L_{i+1}.$ The construction is algorithmic.
\end{prop}

Similarly one can prove the following result, where groups with no sufficient splitting modulo subgroups were defined in \cite{KMel}.

\begin{prop} \label{prop8} Suppose we are given a finitely presented group $G=\langle Z|S\rangle $ and a finite number of finitely generated subgroups $H_1,\ldots ,H_k$
of $G$. There is an algorithm  to construct a finite number of $\Gamma$-NTQ groups corresponding to strict completed fundamental sequences over $\Gamma$ ending with one of the  finitely generated $\Gamma$-limit groups $\{K_i, i\in I\}$ with no sufficient splitting modulo the images of $H_1,\ldots ,H_k$,  such that every homomorphism from $G$ to $\Gamma$ factors through  one  of these groups $\{K_i, i\in I\}$ followed by a homomorphism from the corresponding $K_i$ to $\Gamma$. Groups $K_i$ are given by generating sets as subgroups of  $\Gamma$-NTQ groups. 
\end{prop}
\begin{proof} We can consider the relations of $G$ as a system of equations $S(Z)=1$ over $\Gamma$, construct corresponding  generalized equations $S_i(X_i)$ over a free group using canonical representatives.  The groups $F_{R(S_i)}$ will then contain lifts of the subgroups $H_1,\ldots ,H_k$ generated by some pre-images of their generators. We can construct fundamental sequences for each  $F_{R(S_i)}$ relative lifts of the subgroups $H_1,\ldots ,H_k$ and apply the reworking process as in the proof of the previous proposition, to transform these fundamental sequences into fundamental sequences over $\Gamma$ relative to the subgroups $H_1,\ldots ,H_k$.

\end{proof}
\begin{remark} In Sela's terminology  rigid and solid limit groups are similar objects to groups with no sufficient splitting.
\end{remark}

\begin{prop} \label{relh} If $G$ is a toral relatively hyperbolic group, and $S(X,A) = 1$ a system of equations having a solution in $G$, then there exists an algorithm to construct a finite number $G$-NTQ groups  that encode all solutions of $ S(X,A) = 1$ in $G$.
\end{prop}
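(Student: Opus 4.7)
The plan is to mimic the hyperbolic construction from Lemma~\ref{Lem:RipsSela1} through Proposition~\ref{prop7}, replacing each ingredient by its toral relatively hyperbolic analog. The whole proof will have three layers: first a ``canonical representatives'' reduction to a simpler class of groups, then a Makanin--Razborov style tree, and finally a strictification procedure based on the JSJ machinery of Proposition~\ref{prop6}.

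First, I would replace Lemma~\ref{Lem:RipsSela1} by its toral relatively hyperbolic version. Dahmani's canonical representatives construction in \cite{D1} (the key technical device behind the decidability of the existential theory of a toral relatively hyperbolic group) takes a system $S(X,A)=1$ over $G$ and produces finitely many systems $S_1(X_1,A)=1,\ldots,S_n(X_n,A)=1$ over the free product $F \ast P_1 \ast \cdots \ast P_k$ of a free group with the (free abelian) parabolic subgroups of $G$, together with homomorphisms $\rho_i$ such that every solution of $S(X,A)=1$ in $G$ arises, up to a $G$-automorphism, as the $\rho_i$-image of a solution of some $S_i$, and every $\rho_i$-image of a solution of $S_i$ is a genuine solution of $S$. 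As in Lemma~\ref{Lem:RipsSela1}, the reconstruction of the canonical representatives of the values of variables is along quasigeodesics.

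Second, I would invoke the Makanin--Razborov machinery for systems of equations over the free product $F\ast P_1\ast\cdots\ast P_k$. This is again available from the construction of \cite{KMel} suitably modified in the presence of free abelian factors (and is essentially the construction used in \cite{Gro05} for general toral relatively hyperbolic groups, as already referenced after Proposition~\ref{Thm:EffectiveSolutions}). Apply it to each $S_i$ to obtain a tree $\mathcal{T}_i$ of fundamental sequences exactly as in Section~\ref{section:HomDiagrams}; splice them under a common root via the arrows $\rho_i$, just as was done in the construction of $\mathcal{T}$ in that section. The resulting diagram encodes $\Hom(G_{R(S)},G)$.

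Third, I would convert this tree into a tree of \emph{strict} fundamental sequences over $G$ by the same scheme used to prove Proposition~\ref{prop7}. Replace each terminal group $F(Y)\ast F$ by $F(Y)\ast G$, then, going upward level by level, rebuild each $G_{v_{k-1}}$ using its primary abelian JSJ decomposition computed via Proposition~\ref{prop6}. At each level one inspects how the edge groups, MQH subgroups, non-abelian rigid subgroups, and their $H$-envelopes (as in item~3 of Proposition~\ref{prop7}) are mapped by the next arrow, collapses the offending edges, fills in boundary components of quadratic surfaces that become trivial, and replaces any non-abelian vertex group mapped to an abelian group by its maximal free abelian quotient; connected components of the resulting decomposition that do not contain $G$ are absorbed into a conjugate of $G\ast F(Y)$ through a fresh stable letter. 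The verifications of Propositions~\ref{propDG} and~\ref{propD} ensure that each new vertex group remains toral relatively hyperbolic, so the procedure can iterate.

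The main obstacle, as in Proposition~\ref{prop7}, is termination of the strictification: each pass either strictly reduces the regular size of the JSJ at some level (sizes of the MQH subgroups in decreasing lexicographic order, as in the proof of Proposition~\ref{prop6}) or strictly decreases the abelian rank $ab(D)$, and both invariants are bounded below, so only finitely many passes occur. Finiteness of the underlying tree comes from finiteness of the Makanin--Razborov diagrams over $F\ast P_1\ast\cdots\ast P_k$ and the finiteness of the Dahmani reduction. Correctness, namely that every homomorphism $G_{R(S)}\to G$ factors through some branch, is inherited from the corresponding property at each of the three layers.
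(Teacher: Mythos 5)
Your proposal follows essentially the same route as the paper: Dahmani's canonical representatives reduce the system over $G$ to finitely many systems over the free product $\tilde{G}=F\ast P_1\ast\cdots\ast P_k$, one builds Makanin--Razborov (Hom-)diagrams over $\tilde{G}$, and then strictifies over $G$ exactly as in the proof of Proposition~\ref{prop7}. One small correction on the middle step: the paper appeals to Casals-Ruiz--Kazachkov \cite{CRK} for effective fundamental sequences over the free product $\tilde{G}$; your suggestion to instead rely on \cite{Gro05} does not work here, since (as the paper itself notes after Proposition~\ref{Thm:EffectiveSolutions}) that construction is purely existential and not algorithmic, and what is needed is precisely the effective Makanin--Razborov machinery over free products supplied by \cite{CRK}.
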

\begin{proof} The proof is similar to the proof of Proposition \ref{prop7}. We assign to each solution of $S(X,A)=1$ canonical representatives in the free product $\tilde G$ of a free group and parabolic subgroups  (using \cite{D1}, Theorem 3.1) and a disjunction of systems of equations over $\tilde G$. By \cite{CRK}, the solution set of a system of equations in $\tilde G$ is described by a finite number of fundamental sequences over $\tilde G$. We will have a statement similar to Proposition \ref{Thm:EffectiveSolutions}. Then we replace each fundamental sequence with a fundamental sequence over $G$ using the reworking process as we did in the proof of Proposition \ref{prop7}, where the free group $F$ should be replaced by the free product $\tilde G$ and the hyperbolic group $\Gamma$ should be replaced by the group $G$.
\end{proof}
 We define a partial order on the set of $\Gamma$-limit groups $\{H_{b_i}=\phi_{b_i}(\Gamma_{R(S)})\leq N_{b_i}\}$ over all branches $b_i$ of ${\mathcal T}(S,\Gamma )$, with $N_{b_i}$ the $\Gamma$-NTQ group corresponding to $b$. It's often notationally convenient to denote $H_{b_i},N_{b_i},\phi_{b_i}$ by $H_i,N_i,\phi_i$.  For given elements $H_1,H_2$ we say that $H_2\leq _{Sol} H_1$  if   for every  homomorphism  $\psi_2: H_2\rightarrow \Gamma$  that factors through $N_2$  there exists  a homomorphism  $\psi_1: H_1\rightarrow \Gamma$ that factors through $N_1$ such that $\phi _1\psi _1= \phi _2\psi _2 $.  In this case the  canonical map $\phi _2: \Gamma _{R(S)}\rightarrow H_2$ can be split  as $\tau\phi _1,$  where $\phi _1$ is the canonical homomorphism $\phi _1: \Gamma _{R(S)}\rightarrow H_1$ and $\tau$ is a $\Gamma$-epimorphism from $H_1$ to $H_2$. 
 
\begin{remark}\label{!!!} (\cite{KMT}, Proposition 6)  There is an algorithm to find all homomorphisms $\phi _b: \Gamma _{R(S)}\rightarrow N_b$,  where $N_b$ is a $\Gamma$-NTQ group corresponding to a strict fundamental sequence in ${\mathcal T} (S, \Gamma)$ for a Sol-maximal $\Gamma$-limit quotient $H$ of $\Gamma _{R(S)}$ such that $H=\phi _b(\Gamma _{R(S)})$.\end{remark}

\subsection{Quasi-convex closure of subgroups of $\Gamma$}
Let $\Gamma$ be a torsion free non-elementary hyperbolic group, $H$ a subgroup of $\Gamma$ given by generators. We will describe a certain procedure that either finds a splitting of $H$, or  constructs a group $K$, $H\leq K\leq \Gamma$ such that $K$ is quasi-convex in $\Gamma$ and rigid relative to $H$ (every cyclic splitting of $K$ induces a cyclic splitting of $H$).  We will call this $K$ a {\em quasi-convex closure of $H$}. 

If $H$ is not elliptic in any  cyclic splitting of $\Gamma$, then $\Gamma$ is a quasi-convex closure of $H$.  If $H$ is elliptic in a cyclic splitting of $\Gamma$, we consider instead of $\Gamma$ the vertex group $\Gamma_1$ (in the cyclic JSJ decomposition of $\Gamma$ modulo $H$) containing a conjugate of $H$. Such a subgroup $\Gamma_1$ is quasi-convex and hyperbolic as a vertex group  by Proposition \ref{propD}, and the quasi-convexity constants can be found effectively. Then we consider a splitting of $\Gamma_1$, and continue this process until $H$ is not elliptic in any cyclic decomposition of the corresponding vertex group $\Gamma_j$. Then the  conjugate $K$ of $\Gamma_j$ that contains $H$ is  the   quasi-convex closure of $H$.
Hierarchical accessibility for hyperbolic groups is proved in \cite{louder-touikan}.  Notice that $K$  is quasi-convex, therefore hyperbolic.

\subsection{A complete set of canonical NTQ groups}\label{complete}

We thank D. Groves and H. Wilton for finding some inaccuracies in the previous version of this section \cite{GW1}. 

We will use the following result without further references.
\begin{prop} (\cite{Sela}, Theorem 1.10) \label{10}  Let $\Gamma$ be a non-elementary torsion-free hyperbolic
group, and let $L$ be a non-abelian, freely indecomposable, (strict)
$\Gamma$-limit group (that is not a subgroup of $\Gamma$, see \cite{Sela}, Definition 1.2). Then $L$ admits an essential cyclic splitting.
If, furthermore, $L$ contains  $\Gamma$ as the coefficient subgroup,  then the splitting may
be chosen so that the coefficient subgroup is elliptic.\end{prop}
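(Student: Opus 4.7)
The plan is to apply the Bestvina--Paulin method to build an isometric action of $L$ on an $\mathbb{R}$-tree and then analyze this action via the Rips machine to extract the required splitting. Since $L$ is a strict $\Gamma$-limit group, there is a discriminating sequence of homomorphisms $\phi_n : L \to \Gamma$: for every finite subset $F \subset L \setminus \{1\}$, some $\phi_n$ is injective on $F$. Fix a finite generating set $X$ of $L$ and, for each $n$, choose a basepoint $x_n$ in the Cayley graph of $\Gamma$ that minimizes $\lambda_n := \max_{x \in X}\, d(x_n, \phi_n(x)\cdot x_n)$. Because $L$ is non-abelian and the $\phi_n$ are eventually injective on any finite set, the $\phi_n$ cannot be uniformly conjugated into a bounded region, so after passing to a subsequence $\lambda_n \to \infty$.

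Rescaling the Cayley graph of $\Gamma$ by $1/\lambda_n$ and taking the equivariant Gromov--Hausdorff limit of the based spaces $(\Gamma/\lambda_n, x_n)$ produces, by the standard Bestvina--Paulin compactness argument in the $\delta$-hyperbolic setting, an $\mathbb{R}$-tree $T$ equipped with a non-trivial isometric action of $L$. The basepoint normalization guarantees that $L$ has no global fixed point in $T$, and the sequence being discriminating ensures that the kernel of the $L$-action is trivial (or at least does not disturb the conclusion). The key technical input, and the main obstacle, is to show that \emph{arc stabilizers of the action $L \curvearrowright T$ are cyclic}. This uses acylindricality of the $\Gamma$-action on its Cayley graph, together with the fact that in a torsion-free hyperbolic group the centralizer of any nontrivial element is cyclic (Lemma \ref{Lem:AlgorithmsRelativelyHyperbolic}), so that long thin configurations in $\Gamma$ forcing multiple generators to almost-commute up to rescaling can only produce cyclic stabilizers in the limit.

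Once arc stabilizers are known to be cyclic, the Bestvina--Feighn--Rips machine decomposes $T$ into simplicial, surface (IET), axial, and Levitt (thin) pieces, and since $L$ is finitely generated and freely indecomposable, each piece yields a nontrivial abelian, in fact cyclic, splitting of $L$ as a graph of groups. To upgrade this to an \emph{essential} splitting, I would use the CSA property inherited from $\Gamma$ (see the CSA property cited in the proof of Proposition~\ref{prop6}): any edge group $E$ in the decomposition lies in a unique maximal abelian subgroup $M$, and replacing $E$ by its root closure $M \cap \mathrm{rad}(E) = \{x : x^k \in E\}$ inside the adjacent vertex groups does not destroy the splitting and produces edge groups closed under taking roots, which is exactly essentiality.

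For the restricted case, where there is a coefficient subgroup $\Gamma_0 \leq L$ and the $\phi_n$ are required to restrict to a fixed embedding of $\Gamma_0$ into $\Gamma$, one modifies the choice of basepoints $x_n$ by minimizing the displacement only over the non-coefficient generators, or equivalently chooses $x_n$ in a $\phi_n(\Gamma_0)$-invariant bounded region. Then the orbits $\phi_n(\Gamma_0)\cdot x_n$ have diameter bounded independently of $n$, so after rescaling by $\lambda_n \to \infty$ they collapse, forcing $\Gamma_0$ to fix a point of $T$. Hence $\Gamma_0$ is elliptic in the action and lies in a vertex group of the resulting cyclic splitting of $L$, which completes the proof.
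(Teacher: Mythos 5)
The paper does not actually prove this proposition---it is cited to \cite{Sela} and used as a black box---so there is no proof in the paper to compare against. Your sketch follows the standard Bestvina--Paulin plus Rips machine route that underlies Sela's argument, which is the right method, but it has a genuine gap at the very first step. You justify $\lambda_n\to\infty$ by saying ``Because $L$ is non-abelian and the $\phi_n$ are eventually injective on any finite set, the $\phi_n$ cannot be uniformly conjugated into a bounded region.'' That inference is wrong. If $\lambda_n$ stayed bounded after the basepoint minimization, then, since balls in $\Gamma$ are finite, one could pass to a subsequence on which $\phi_n|_X$ is eventually constant, and the discriminating property would upgrade the resulting map $L\to\Gamma$ to a monomorphism. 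Bounded $\lambda_n$ is therefore equivalent to $L$ embedding into $\Gamma$, and non-abelianness together with free indecomposability does not rule that out: a one-ended quasiconvex subgroup of $\Gamma$ (a hyperbolic surface subgroup, say) is a non-abelian, freely indecomposable $\Gamma$-limit group with a constant discriminating sequence, so your stated reason would apply to it and fail. What rules out bounded $\lambda_n$ is precisely the hypothesis that $L$ is a \emph{strict} $\Gamma$-limit group, which you mention in your opening sentence but then do not use; the argument has to run through strictness explicitly at this point.

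A secondary issue is the jump from abelian to cyclic. The Bestvina--Paulin/CSA analysis gives that segment (and tripod) stabilizers of $L\curvearrowright T$ are \emph{abelian}; in a $\Gamma$-limit group these need not be cyclic, and the graph-of-actions decomposition from the Rips machine a priori yields an abelian splitting. Passing to the asserted essential \emph{cyclic} splitting requires a further reduction, e.g.\ splitting a higher-rank abelian edge or vertex group over a cyclic direct summand, which you should at least flag. Likewise, ``take the root closure of $E$'' does not directly produce an essential splitting, since the root closure of $E$ computed in the two adjacent vertex groups need not agree; one must fold or slide. Finally, in the restricted case the phrase ``the orbits $\phi_n(\Gamma_0)\cdot x_n$ have diameter bounded independently of $n$'' is false as stated ($\Gamma_0$ is infinite, so the orbit is unbounded); what you want is that the displacement of $x_n$ by a fixed finite generating set of $\Gamma_0$ is $O(1)$, hence $o(\lambda_n)$, so that $\Gamma_0$ becomes elliptic in $T$.
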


In the previous section we constructed the tree of strict fundamental sequences (or a $Hom$-diagram) encoding all solutions of a finite system $S(Z,A)=1$ of equations over $\Gamma$ using the tree of fundamental sequences for ``covering'' systems of equations over $F$.

In this section we describe a {\em canonical $Hom$-diagram} that encodes all the homomorphisms from $\Gamma _{R(S)}$ into $\Gamma$. This diagram is  a tree of canonical strict fundamental sequences (in \cite{KMel}, Section 7.6, a similar tree was called  the (augmented) canonical embedding tree $T_{CE}(F_{R(S)})$), and corresponding NTQ systems.   

%All the homomorphisms from $\Gamma _{R(S)}$ into $\Gamma$ factor through a finite number of $\Gamma$-limit quotients. Homomorphisms from these limit quotients are given by compositions of  the following maps:

%1) canonical automorphisms corresponding to  JSJ decompositions of (the free factors of) $\Gamma$-limit groups in the vertices of the tree,

%2) fixed homomorphisms from the $\Gamma$-limit groups into their maximal standard (proper) $\Gamma$-limit quotients (except for the last step),

%3) at the leaves, compositions with  a fixed embedding of a $\Gamma$-limit group in the diagram into  conjugates of $\Gamma$ and  substitutions of the terminal free groups that appear in the diagram into $\Gamma$. 

 The $Hom$-diagram is a tree.  For each  branch of this tree  \begin{equation}\label{canhom}\Gamma_{R(S)}\rightarrow _{\pi_0} G_1\rightarrow _{\pi_1}  G_2\rightarrow\ldots \rightarrow _{\pi_{n-1}}  G_n=F\ast \Gamma \ast H_1\ast\ldots \ast H_k,\end{equation}   there is a strict fundamental sequence assigned. Here 
 
 1) $H_1,\ldots , H_k$ are freely indecomposable groups isomorphic to subgroups of $\Gamma$, 
 
 2) $\pi_i, 0<i<n-1$ are fixed proper epimorphisms,  $\pi _{n-1}$ may not be proper.
 
 3) The homomorphisms in this sequence are compositions $\pi_0\sigma_1\pi _1\ldots\sigma _{n-1}\pi _{n-1}\tau ,$ where $\sigma _i$ is a canonical automorphism of $G_i$ corresponding to a Grushko decomposition of $G_i$ followed by the JSJ decompositions of the freely indecomposable factors, 
 
 4)  $\tau$ is a homomorphism that maps each $H_i$ monomorphically  onto a conjugate of a fixed subgroup of $\Gamma$ (and for each $H_i$ it is a fixed monomorphism followed by a conjugation) and maps $F$ into $\Gamma$.

The existence of such a diagram  can be obtained from \cite{Sela} and \cite{KMac}.
 Indeed, the difference between the diagram that we described and the diagram constructed in \cite{Sela} is that in \cite{Sela} the homomorphism $\pi _{n-1}$ must be proper and  homomorphism  $\tau$ can be an arbitrary embedding of  groups $H_1,\ldots , H_k$ into $\Gamma$. In our diagram, if $H_i$ has a non-trivial abelian splitting,  then $H_i$ appears as a factor in the Grushko decomposition of $G_{n-1}$, and in the leaves of the diagram we have a fixed (up to conjugacy) monomorphism of $H_i$ into a conjugate of $\Gamma$.  Notice that (non-canonical) fundamental sequences from  Proposition \ref{KMac}   terminate with fixed conjugacy classes of monomorphisms of $K_t$ into   $\Gamma$,  and we are going to use this fact to prove that  canonical fundamental sequences  satisfy the same property. Notice also that by \cite{GW}, Lemma 7.2, $H_i$ has infinitely many conjugacy classes of monomorphisms into $\Gamma$ if and only if $H_i$ has a non-trivial cyclic splitting.  
 We  define two  embeddings of $H_i$ into $\Gamma$ to be equivalent if one is obtained from the other by pre-composing with an automorphism of $H_i$ generated by Dehn twists corresponding to cyclic splittings of $H_i$ and post-composing with a conjugation. 
 
We claim that there is a finite number of such equivalence classes.
Suppose to the contrary, that there is an infinite family of non-equivalent  monomorphisms.  Let $\{h_i\}$ be a fixed system of generators of $H$.  Consider a sequence $(\alpha _j:H\rightarrow \Gamma )$ of non-equivalent minimal monomorphisms (corresponding to 
minimal in the equivalence class sum of lengths of the images of $\{h_i\}$).  After passing to a subsequence, the sequence can be taken to be stable, it converges to a stable isometric action of $H$ on a real tree. Since $H=H/Ker_{\rightarrow}(\alpha _j),$  one can use Rips machine to show that $H$ has an essential cyclic splitting such that some of the monomorphisms in the sequence can be shorten 
by pre-composing with Dehn's twists corresponding to this splitting. Since the sequence consists of minimal monomorphisms,
we have a contradiction with the assumption about infinite number of equivalence classes.
 
  We can assume (combining foldings and slidings) that all JSJ decompositions have the property that each vertex with non-cyclic abelian vertex group that is connected to a rigid subgroup is connected to only one rigid subgroup.
 
We assign an NTQ system to this branch as follows.   First, replace each subgroup $H_i$ that is not a hyperbolic closed surface group by its quasi-convex closure $\Gamma _i$, then $G_n$ is replaced by  $\hat G_n=F\ast \Gamma \ast \Gamma _1\ast\ldots \ast\Gamma _k,$  $G_n\leq\hat G_n$. Notice that $\hat G_n$ is a hyperbolic group. Let $D_{n-1}$ be an abelian JSJ decomposition  of $G_{n-1}$ (we mean the decomposition into a free product of  freely indecomposable factors followed by the JSJ decompositions of the factors).  We order the edges $e_1,\ldots e_k$ between   rigid subgroups.  We extend the centralizer of the image of  the edge group of $e_1$ in $\hat G_n$ by a new letter,  and obtain a new group ${\hat G_n}^{(1)}$, then iteratively for each $i\leq k$ we extend by a new letter the centralizer of the image of the edge group of $e_i$ in  ${\hat G_n}^{(i-1)}$ (see \cite{KMel} for precise description). 

Then the fundamental group of the subgraph of groups generated by the rigid subgroups in $D_{n-1}$ will be embedded into this iterated centralizer extension $\tilde G_{n}={\hat G_n}^{(k)}$ of $\hat G_{n}$. We also attach abelian vertex groups of $D_{n-1}$ to $\tilde G_n$ the following way.  Consider edges (with  maximal cyclic edge groups) that connect  non-cyclic abelian vertex groups  in $D_{n-1}$  to a non-abelian non-QH vertex group. Some of  the centralizers of the images of the edge groups may become conjugate in $\tilde G_{n}$.  We join all edges with conjugate centralizers of the images of the edge groups into an equivalence class. For each equivalence class we do the following. Denote the edges in the class by $\bar e_1,\ldots , \bar e_s$. Let $m$ be the sum of the ranks of abelian vertex groups connected to $\bar e_1,\ldots , \bar e_s$. We extend only the centralizer of the image of the edge group corresponding to $\bar e_1$ by new $m-s$ commuting letters (free rank $m-s$ extension of a centralizer defined in the introduction).   Denote the obtained group by $\dddot G_n$. We attach QH subgroups of $D_{n-1}$  identifying boundary components  with their images in $\bar G_n$, and obtain the new group $\overline G_{n-1}$ such that $G_{n-1}$ is embedded into   $\overline G_{n-1}$.   Notice that  since $G_{n-1}$ is a $\Gamma$-limit group,  edge groups corresponding to edges adjacent to  QH subgroups are maximal cyclic in QH subgroups. 

The group $\dddot G_n$ is a $\Gamma$-limit group as the iterated extension of centralizers  of the $\Gamma$-limit group  $\hat G_n$.  The group $\overline G_{n-1}$ is a $\Gamma$-limit group because it is a fundamental group of a family of regular quadratic equations over $\dddot G_n$.  The group $\overline G_{n-1}$ is an NTQ group by definition. It is also toral relatively hyperbolic by Dahmani's Combination Theorem 0.1, items (1) and (2) \cite{D}. 

We denote $\hat G_n$ by $N_n$ and $\overline G_{n-1}$ by $N_{n-1}$, then $G_{n-1}\leq N_{n-1}.$  

Suppose we have already constructed the group $N_{i}$. We will show now how to construct $N_{i-1}$.   Let $D_{i-1}$ be an abelian JSJ decomposition  of $G_{i-1}$ (we mean the decomposition into a free product of  freely indecomposable factors followed by the JSJ decompositions of the factors).  For each freely indecomposable factor of $G_{i-1}$  that is not a closed surface group and not a free abelian group  we perform the construction in parallel and then take the free product of the constructed groups and all the factors that are  closed surface groups and  free abelian groups. 

To simplify notation we now suppose that $G_{i-1}$ is freely indecomposable and $\bar D_{i-1}$ is its abelian JSJ decomposition.  Let $\pi_{i-1}:G_{i-1}\rightarrow G_i$ be the canonical  homomorphism from diagram (\ref{canhom}), 
and when we are talking about images of elements  $g\in G_{i-1}$, we mean $\pi_{i-1}(g)$ that belongs to $G_i$, $N_i$, and any group containing $G_i$ as a subgroup.   

We order the edges $e_1,\ldots e_k$ between   rigid subgroups in $\bar D_{i-1}$.  We freely extend the centralizer of the image of  the edge group of $e_1$ in $N_i$ by a new letter,  and obtain a new group ${N_i}^{(1)}$, then iteratively for each $j\leq k$ we freely extend by a new letter the centralizer of the image of the edge group of $e_i$ in the previously constructed  group ${N_i}^{(j-1)}$.

Then the fundamental group of the subgraph of groups generated by the envelopes of rigid subgroups in $\bar D_{i-1}$ will be embedded into this iterated centralizer extension $\tilde N_{i}={N_i}^{(k)}$ of $N_i$. We also attach abelian vertex groups of $\bar D_{i-1}$ to $\tilde N_i$ the following way.  Consider edges that connect  non-cyclic abelian vertex groups  in $\bar D_{n-1}$  to  non-abelian non-QH vertex groups. Some centralizers of the images of the edge groups may become conjugate to centralizers of  some other edge groups in $\tilde N_i$.  We put two edges into the same equivalence class if some conjugates of the images of their edge groups in $\tilde N_i$ commute.  For each equivalence class we do the following. Denote the edges in the class by $\bar e_1,\ldots , \bar e_s$. Let $m$ be the sum of the ranks of abelian vertex groups connected to $\bar e_1,\ldots , \bar e_s$, and $p$ the sum of the ranks of their direct summands containing finite index subgroups generated by edge groups. We extend only the centralizer of the image in $\tilde N_i$ of the edge group corresponding to $\bar e_1$ by new $m-p$ commuting letters.  Denote the obtained group by $\dddot N_i$. We attach QH subgroups of $\bar D_{n-1}$  to $\dddot N_i$ identifying boundary components  with their images in $\tilde N_i$ that is a subgroup of $\dddot N_i$, and obtain the new toral relatively hyperbolic NTQ $\Gamma$-limit group $N_{i-1}$ such that $G_{i-1}$ is embedded into   $N_{i-1}$.   

 We construct iteratively the  group  $N=N_1$, which is NTQ, and, therefore, a $\Gamma$-limit group,  toral relatively hyperbolic  and contains $G_1$ as a subgroup.

 The set of all NTQ groups corresponding to a canonical $Hom$-diagram is {\em a complete set of canonical NTQ groups}. We often also consider a complete set of {\em canonical $\Gamma $-NTQ groups}, when the bottom level  is  a free product  of $F$ and several  conjugates of $\Gamma$ by new generators.  Namely, in the beginning of the construction of the canonical $\Gamma$-NTQ group we can take this free product istead of $F*\Gamma *\Gamma _1*\ldots *\Gamma _k$, and then apply the construction.  
 
\begin{definition}  If we have an NTQ group $N_H$ over the group $H$. Then we can increase the group $H$ by $K$ and construct the NTQ  group $N_K$ over $K$ such that $N_H\leq  N_K$  using the construction above. We call $N_K$ the {\em commutative transitive closure} of $N_H$ obtained by extending $H$ by $K$.
\end{definition}  

The problem of abelian edges becoming conjugated happens unavoidably when there are parameters.
  
\begin{definition} The group of canonical automorphisms of the  NTQ group $N_i$  $(i=1,\ldots,n-1)$ is the group of canonical automorphisms  with respect to the Grushko decomposition of $N_i$ followed by an abelian splitting  such that MQH subgroups correspond to MQH subgroups of $N_i$  (and $G_i$), abelian vertex groups correspond to the abelian vertex groups of $N_i$  and non-QH non-abelian vertex groups are freely indecomposable factors in the Grushko decomposition of the group $N_{i+1}$. 
\end{definition} 

For each fundamental sequence (\ref{canhom}) of the canonical Hom-diagram we assign the following fundamental sequence of the {\em completed} canonical Hom-diagram
 \begin{equation}\label{canhom1}\Gamma_{R(S)}\rightarrow _{\pi_0} N_1\rightarrow _{\bar\pi_1}  N_2\rightarrow\ldots \rightarrow _{\bar\pi_{n-1}}  N_n=F\ast \Gamma \ast \Gamma _1\ast\ldots \ast \Gamma_k,\end{equation}

 1)  $\bar\pi_i, 0<i\leq n-1$ is a retraction on $N_{i+1},$
 
 2) The homomorphisms in this {\em completed} fundamental sequence are compositions $\pi_0\bar\sigma_1\bar\pi _1\ldots\bar\sigma _{n-1}\bar\pi _{n-1}\tau ,$ where $\bar\sigma _i$ is a canonical automorphism of $N_i$,
 
 3)  $\tau$ is a homomorphism that maps each $\Gamma _i$ monomorphically  onto a conjugate of a fixed subgroup of $\Gamma$ (and for each $\Gamma _i$ it is a fixed monomorphism followed by a conjugation) and maps $F$ into $\Gamma$.
 
 All $\Gamma$-homomorphisms from $\Gamma _{R(S)}$ to $\Gamma$ that factor through the fundamental sequence (\ref{canhom}) naturally factor through 
 (\ref{canhom1}).

\subsection{Correction to Sela's theorem about formal solutions \cite{Sela}}\label{ImFT}

We will formulate a theorem that is similar to  the Parametrization Theorem, also called the Implicit function theorem (\cite{Imp},Theorem 12) for free groups. A similar result is also formulated  in \cite{Sela}, Theorem 2.3 for hyperbolic groups, but the formulation in \cite{Sela} contains an error as one can see from the  counter-example at the end of this section.

 Let $S(X,A)=1$ be an NTQ  system over a non-elementary torsion free hyperbolic group $\Gamma$.  Let $N$ be the corresponding  NTQ group.  Suppose a formula  $\Psi=T(X,Y,A) = 1 \ \wedge \ W(X,Y,A) \neq 1$ is compatible 
with $S(X,Y)=1$. {\em Corrective extensions} of $N$ are obtained by consecutively performing the following steps:

(i) Replacing each of the free abelian groups that appear in the Grushko  decompositions on different levels of the NTQ group by a free abelian group of the same rank (depending on $\Psi$), that contains the original one as a subgroup of finite index.

Replacing each of the free abelian vertex groups that appear in the abelian decompositions of freely indecomposable factors on different levels of the NTQ group by a free abelian group of the same rank (depending on $\Psi$), that contains the original one as a subgroup of finite index.

(ii) replacing each of the terminal factors $H_i$ by a  freely indecomposable quasi-convex  subgroup $\Gamma _i$ of $\Gamma$, where $H_i\leq\Gamma_i$ and the fixed monomorphism from $H_i$ to the conjugate of $\Gamma$ is extended to the fixed  monomorphism from $\Gamma_i$ to this conjugate of $\Gamma$.  Subgroups $\Gamma_i$ depend on $\Psi$.

(iii)  Embedding the obtained group into the commutative transitive group using the  procedure similar to  the construction of the  NTQ group for a  strict fundamental sequence in Section \ref{complete}. We do have a completed strict fundamental sequence corresponding to the NTQ system $S(X,A)=1$ here, but since the subgroups at the base level were extended, we have to complete it again because   centralizers of some abelian vertex groups that were not conjugate,  may become conjugate. 

We define the abelian size of $N$, denoted $ab(N)$, as the sum of the ranks of the abelian vertex groups in decompositions corresponding to different levels of $N$ minus the sum of the ranks of their direct summands containing edge groups as subgroups of finite index. Then $ab(N)$ is the same as $ab(N_{corr})$ for each corrective extension $(N_{corr})$ of $N$.

\begin{theorem}\label{IFT} (Implicit Function Theorem) Let $S(X,A)=1$ be an NTQ  system over a non-elementary torsion free hyperbolic group $\Gamma$. Suppose a formula  $T(X,Y,A) = 1 \ \wedge \ W(X,Y,A) \neq 1$ is compatible 
with $S(X,Y)=1$. Then this formula admits a lift into  a family of $NTQ$ groups $N_1,\ldots ,N_k$ that are corrective extensions of $\Gamma _{R(S)}$, and  toral relatively hyperbolic. Every solution from the  fundamental sequence of solutions of $S(X,Y)=1$ (see Remark 1) factors through  the  fundamental sequence for the NTQ system corresponding to   one of the corrective extensions.
\end{theorem}

{\bf A counter-example to \cite{Sela}, Theorem 2.3.}

We will be considering $\Gamma$-limit groups for a torsion free hyperbolic group $\Gamma$.

Let $H=\langle h_1,\ldots ,h_k|R(h_1,\ldots ,h_k)=1\rangle$, where $R=1$ is a finite set of relations,  be a rigid hyperbolic group with trivial outer automorphism group.  Let $u=u(h_1,\ldots ,h_k), v=v(h_1,\ldots ,h_k)$ be non-conjugate elements from $H$ that are not proper powers.
Let $\Gamma$ be isomorphic to $\langle H,c|u^c=v\rangle$, then $\Gamma$ is hyperbolic by the combination theorem. Then the group $L=\Gamma\ast L_1$, where $L_1=\langle H,y_1,y_2|[u,y_1]=[v,y_2]=1\rangle $ is a restricted $\Gamma$-limit group, this presentation is the   JSJ decomposition of $L_1$, and $\Gamma\ast H$ is the shortening quotient.  So the strict resolution of $L$  is $L\rightarrow \Gamma\ast H$, and the completion of this resolution according to 
\cite{Sela2}, Definition 1.11 (there is a reference to this definition right before Definition 2.2 in \cite{Sela}) is the same resolution, where $L$ is its completed limit group. We now consider the sentence $$\forall h_1,\ldots ,h_k, y_1, y_2\exists x ((R(h_1,\ldots ,h_k)=1\wedge [u,y_1]=[v,y_2]=1)$$ $$
\implies (u^x=v\vee W(h_1,\ldots,h_k)=1)),$$
Where $W(h_1,\ldots,h_k)= 1$ is the disjunction of a finite set of equations  (homomorphisms from $L_1$ to $\Gamma$ that are not injective on $H$ satisfy one of the finite set of equations). Notice that a disjunction of two equations in a torsion free hyperbolic group is equivalent to a system of equations. This sentence is true in $\Gamma$ because there is a unique up to conjugacy embedding of $H$ into $\Gamma$, and we can take for $z$ the corresponding conjugate of $c$.  But such $z$ does not exists in any extension of $L$ obtained from $L$ 
as it is said in \cite{Sela} by
``(i) replacing each of the (free) abelian vertex groups that appear in the various abelian decompositions associated with the completion ($L$) by (free) abelian supergroups that contain ones as subgroups of finite index) and (ii) Replacing each of the factors $H_j$ by a freely indecomposable group $V_j$ with an associated embedding $H_j\rightarrow V_j$, and $V_j$ is isomorphic to a subgroup of $\Gamma$." 

Indeed, to force such $u$ and $v$ from $H$ to become conjugates we have to extend $H$ by $\Gamma _1$, that is an isomorphic copy of $\Gamma$, but in this case
the group $K=\langle \Gamma, y_1, y_2| [u,y_1]=[v,y_2]=1\rangle$  even if we replace it by $K_1$ where we replace the abelian vertex groups by two supergroups containing them as subgroups of finite index, becomes not commutation transitive, because $u=v^c.$
Therefore $K$ or $K_1$ is not a $\Gamma$-limit group, and $\Gamma\ast K\rightarrow \Gamma\ast\Gamma _1$ is not a (well-structured) resolution,
not a resolution at all.

This contradicts to the statement of \cite{Sela}, Theorem 2.3. Notice that there is no proof of Theorem 2.3 in \cite{Sela}.

\subsection{Algorithmic results from \cite{KMT}}
\begin{prop}  (\cite{KMT}, Theorem 2) \label{11}Let $S(Z, A)=1$ be a finite system of equations over $\Gamma$. There is an algorithm to construct  a complete set of corrective extensions of canonical $\Gamma$-NTQ groups that are, in particular,  toral relatively hyperbolic $\Gamma$-limit groups, and associated canonical Hom-diagrams (\ref{canhom1}) and (\ref{canhom})  for $\Gamma _{R(S)}$.  

1)  NTQ-groups $N_i$ in  the completed diagram with branches  (\ref{canhom1}) are constructed by their finite presentations, and canonical groups of automorphisms are given by their generators, and we know their presentation. 

2)  $\Gamma$-limit groups $G_i$ in the diagram with branches (\ref{canhom}) are constructed by defining their generators inside corresponding $NTQ$-subgroups $N_i$, and canonical automorphisms of these $\Gamma$-limit groups are  induced on their  freely indecomposable factors by canonical automorphisms of the corresponding groups $N_i$.\end{prop}

 The proposition is proved using generalized equations for canonical representatives.  

\subsection{Generic Families}\label{subsection:gen-fam}
To detect splittings of $\Gamma$-limit groups we will need  the notion of a generic family of solutions of an NTQ system. It is given in \cite{KMel} (Definition 22) and is very technical. To make this paper more self contained we will give a definition here, we also give it in the language of \cite{Sela2}, Definition 1.5,  because we will have to refer to some results from \cite{Sela}. Our generic family contains a test sequence from \cite{Sela2}.

Let $S(X) = 1$ be a system of equations with a solution in a group $G$. We say that a system of equations $T(X,Y)
= 1$ is {\em compatible} with $S(X) = 1$ over $G$, if for every
solution $U$ of $S(X) = 1$ in $G$, the equation $T(U,Y) = 1$ also
has a solution in $G$. More generally, a formula $\Phi(X,Y)$ in
the language $L_A$ is {\em compatible} with $S(X) = 1$ over $G$,
if for every solution $\bar{a}$ of $S(X) = 1$ in $G$ there exists
a tuple $\bar{b}$ over $G$ such that the formula $\Phi(\bar a,
\bar b )$ is true in $G$.

Suppose now that a formula $\Phi(X,Y)$ is compatible with $S(X)=
1 $ over $G$. We say that $\Phi(X,Y)$ {\em admits a lift to a
generic point} of $S(X) = 1$ over $G$ (or just that it has an {\em $S$-lift} over
$G$), if the formula $\exists Y \Phi(X,Y)$ is true in
$G_{R(S)}$ (here $Y$ are variables and $X$ are constants from
$G_{R(S)}$). Finally, an equation $T(X,Y) = 1$, which is
compatible with $S(X) = 1$, admits a {\em complete $S$-lift} if
every formula $T(X,Y) = 1 \ \& \ W(X,Y) \neq 1$, which is
compatible with $S(X) = 1$ over $G$, admits an $S$-lift. We say
that the lift (complete lift) is {\em effective} if there is an
algorithm to decide for any equation $T(X,Y)=1$ (any formula
$T(X,Y) = 1 \ \& \ W(X,Y) \neq 1$) whether $T(X,Y)=1$ (the formula
$T(X,Y) = 1 \ \& \ W(X,Y) \neq 1$) admits an $S$-lift, and if it
does, to construct a tuple in $G_{R(S)}$ verifying this formula (a solution of $T(X,Y) = 1 \ \& \ W(X,Y) \neq 1$ in $G_{R(S)}$).

We now describe the construction of particular families of solutions, called \emph{generic families}, of a NTQ system which imply nice lifting properties for that system.  This description can be skipped at the first reading.

Consider a fundamental sequence with the corresponding NTQ system $S(X,A)=1$ of depth $N$. We construct generic families iteratively for each level $k$ of the system, starting at $k=N$ and decreasing $k$. There is an abelian decomposition of $G_k$ corresponding to the NTQ structure. Let $V_1^{(k)},\ldots,V_{M_k}^{(k)}$ be the vertex groups of this decomposition given some arbitrary order. We construct a generic family for level $k$, denoted $\Psi(k)$, by constructing generic families for each vertex group in order. We denote a generic family for the vertex group $V_i^{(k)}$ by $\Psi(V_i^{(k)})$. If there are no vertex groups, in other words the equation $S_k=1$ is empty ($G_k=G_{k+1}*F(X_k)$) we take $\Psi(k)$ to be a sequence of growing different Merzljakov's words (defined in \cite{Imp}, Section 4.4).

\begin{remark}\label{Merz}When using generic families in this paper,  by \cite{Sela}, Proposition 2.1,  instead of a family of growing Merzliakov's words  in $\Gamma$, one can just take $\mu_i(H_i)$ conjugated by a new letter, as well as one can take new letters  for the basis of $F(X_k)$.  So instead of a family of homomorphisms in $G$ or $\Gamma$, we can consider generic family as a family of solutions into $G\ast F$ or $\Gamma\ast F$. \end{remark}

If $V_r^{(k)}$ is an abelian group then it corresponds to equations of the form $[x_i,x_j]=1$ or $[x_i,u]=1$, $1\leq i,j\leq s$,  where $u\in U$ runs through generators of a centralizer in $G_{k+1}$. A solution $\sigma$ in $G_{k+1}$ to equations of these forms is called \emph{$B$-large} if there are some $b_1,\ldots,b_s$ with each $b_i> B$ such that $\sigma(x_i)=(\sigma(x_1))^{b_1\ldots b_i}$ or $\sigma(x_i)=u^{b_1\ldots b_i}$, for $1\leq i\leq s$ (possibly renaming $x_1$). A generic family of solutions for an abelian subgroup $V_r^{(k)}$ is a family $\Psi(V_r^{(k)})$ such that for each $B_i$ in any increasing sequence of positive integers $\{B_i\}_{i=1}^{\infty}$ there is a solution in $\Psi(V_r^{(k)})$ which is $B_i$-large.

If  $V_r^{(k)}$ is a QH vertex group of this decomposition, let $S$ be the surface associated to  $V_r^{(k)}$. We associate two collections of non-homotopic, non-boundary parallel, simple closed curves $\{b_1,\ldots b_q\}$ and $\{d_1,\ldots d_t\}$. These collections should have the propery that $S-\{b_1\cup\cdots\cup b_q\}$ is a disjoint union of three-punctured spheres and one-punctured Mobius bands, each of the curves $d_i$ intersects at least one of the curves $b_j$ non-trivially, and their union fills the surface $S$ (meaning that the collection $\{b_1,\ldots b_q, d_1,\ldots ,d_t\}$ has minimal number of  intersections and $S-\{b_1,\cup\cdots\cup b_q\cup d_1\cup\cdots\cup d_t\}$ is a union of topological disks).

Let $\beta _1,\ldots ,\beta_q$ be automorphisms of $V_r^{(k)}$ that correspond to Dehn twists along $b_1,\ldots b_q$ , and $\delta _1,\ldots ,\delta_t$ be automorphisms of $V_r^{(k)}$ that correspond to Dehn twists along $d_1,\ldots d_t$.
We define iteratively a basic sequence of automorphisms $\{\gamma _{L,n}, \phi _{L,n}\}$ (compare with Section 7.1 of \cite{Imp} where one particular basic sequence of automorphisms is used), which is determined by a sequence of  $(t+q)$-tuples $L=\{(p_{1,n},\dots,p_{t,n},m_{1,n},\ldots,m_{q,n})\}_{n=1}^{\infty}$

Let $$\phi _{L,0}=1$$

$$\gamma _{L,n}=\phi_{L,n-1}\delta _1^{m_{1,n}}\ldots \delta _q^{m_{q,n}}, n\geq 1$$

$$\phi _{L,n}=\gamma_{L,n}\beta _1^{p_{1,n}}\ldots \beta _t^{p_{t,n}},n\geq 1$$

Assuming that generic families have already been constructed for $V_i^{(k)}$, $i< r$, and for every vertex group in levels $k'>k$, and that $\Theta _k$ is a family of growing powers of Dehn twists for edges on level $k$, set $\Psi(k')=\Psi(V_{M_{k'}}^{(k')})\Theta _{k'}$ for $k<k'\leq N$ (in other words, the generic family for level $k'$ is the generic family of the last vertex group at that level) and set $\Psi(N+1)=\{1\}$. Let $\pi_k:G_k\rightarrow G_{k+1}$ be the canonical epimorphism. Let $\Sigma_r^{(k)}=\{\psi_{1}\cdots\psi_{r-1}|\psi_{i}\in\Psi(V_i^{(k)})\}$ be the collection of all compositions of generic solutions for previous vertex groups. We then say that $$\Psi(V_r^{(k)})=\{\mu _{L,n,\lambda_n}=\phi _{L,n}\delta _1^{\lambda_n}\ldots \delta _q^{\lambda_n}\sigma_n\pi_k\tau |\sigma_n\in\Sigma_r^{(k)}, \tau\in\Psi(k+1)\}_{n=1}^{\infty}$$ where each $\lambda_n$ is some positive integer, is a generic family for $V_r^{(k)}$ if it has the following property: Given any $n$ and any tuple of positive numbers $\overrightarrow{A}=(A_1,\ldots,A_{nt+nq+1})$ with $A_i< A_j$ for $i< j$, $\Psi$ contains a homomorphism $\mu_{n,L,\lambda_n}$ such that the tuple

\begin{align*}
\overrightarrow{L}_{n,r_n} &=(p_{1,1},\ldots ,p_{t,1}, m_{1,2},\ldots ,m_{q,2},\ldots, m_{1,n},\ldots ,m_{q,n}, p_{1,n},\ldots ,p_{t,n}, \lambda_n) \\
 &= (L_1,\ldots,L_{nt+nq+1})
\end{align*}grows faster than $\overrightarrow{A}$ in the sense that $L_1\geq A_1$ and $L_{i+1}-L_{i}\geq A_{i+1}-A_{i}$.

Finally we set $\Psi(S)=\Psi(V_{M_1}^{(1)})$ to be a generic family of solutions for the $G$-NTQ system $S(X,A)=1$. Notice that $\Psi(S)$ $\Gamma$-discriminates $\Gamma_{R(S)}$. 

One can prove  by inspection of the proof of Theorem 1.18 \cite{Sela2},  and Theorem 2.3 \cite{Sela} the following result.
\begin{prop} If $\Psi(W)$ is a generic family of solutions for a regular
NTQ system $W(X,A)=1$, then for any formula 
$\Phi (X,Y,A) =U(X,Y,A)=1\wedge W(X,Y,A)\neq1$ the following is true: if for any solution $\psi\in\Psi(W)$
there exists a solution of $\Phi(X^{\psi},Y,A)=1$, then $\Phi$ admits a
lift into $\Gamma _{R(W)}.$ If the
NTQ-system $W(X,A)=1$ is not regular, then for any such formula
$\Phi(X,Y,A)=1$ the following is true: if for any solution $\psi\in\Psi(W)$
there exists a solution of $\Phi(X^{\psi},Y,A)=1$, then $\Phi$ admits a
 lift into a family of  corrective extensions of
$\Gamma_{R(W)}$. There is a finite number of these extensions, and any solution of $W(X,A)=1$ factors through one of them.
 \end{prop}

\subsection{Fundamental sequences relative to subgroups}
We first define a {\em quasi-convex closure} of a freely indecomposable $\Gamma$-limit group (inside a corrective extension of a canonical NTQ group). We had such a construction before, but recently a better developed construction of a model $\Gamma$-limit group that suits our needs appeared in \cite{GW1}, therefore we will use it.
 \begin{definition} \label{GW} Let $L$ be a freely indecomposable $\Gamma$-limit group given as a Sol-maximal $\Gamma$-limit quotient of a group $\Gamma _{R(S)},$ where $S=1$ is a finite system of equations over $\Gamma$.
Let $F_{R(\Omega)}$  be the freely indecomposable factor of  the coordinate group  of a generalized equation constructed using canonical representatives 
such that a generic family  $\Psi$ of solutions for a canonical strict fundamental sequence for $L$  over $\Gamma$ is described by this generalized equation.
The fundamental sequence of solutions of $\Gamma_{R(\Omega)}$  corresponding to a generic subfamily  of $\Psi$ can be re-worked as in the proof of Proposition \ref{prop:7} into a completed fundamental sequence of solutions for $\Gamma_{R(\Omega)}$ such that the corresponding NTQ group $N$ over $\Gamma$  is a corrective extension of a canonical  
NTQ  group for $L$ by Proposition \ref{11}. 

If in the re-working process, going from bottom to top, instead of the construction of an NTQ group (as in section \ref{complete}) one applies the  construction of model groups  introduced in \cite{GW1}, then as a result, one algorithmically obtains a toral relatively hyperbolic $\Gamma$-limit group 
$M$ such that $L\leq L_1\leq M\leq N,$  where $L_1$ is a maximal $\Gamma$-limit quotient of $\Gamma _{R(\Omega)}$
that embeds into $N$.

We call this group $M$ a {\em quasi-convex closure} of $L$.

 \end{definition}

\begin{remark}\label{QC} \begin{itemize}
\item [a)] $M$ is not a model group for $L$, it is a model group for a larger group, because to construct a system of equations for canonical representatives in $F$ we add additional variables.
\item [b)] The construction of the complete set of generalized equations does not depend on the way we represent  the system $(Z,A)=1$ over $\Gamma$ as a triangular system. 
\item [c)] Let $M$ be a quasi-convex closure of $L$. Then $M$ is rigid modulo $L$  (\cite{GW1}, Corollary 5.4 implies that $M$ is rigid modulo $L_1$, and $L_1$ is rigid modulo $L$ by construction).
\item [d)] MQH vertex groups of $L$ and $M$ are the same.
\item [e)] The relationship between groups of canonical automorphisms of $M$ and $L$ is the same as described in \cite{GW1}. 
\end{itemize}\end{remark} 
 
 Similarly to Proposition \ref{11} one can prove the following result.

\begin{prop}  \label{12}Let $S(Z, A)=1$ be a finite system of equations over $\Gamma$. There is an algorithm to construct a complete set of corrective extensions of canonical NTQ groups (and systems) for $\Gamma _{R(S)}$ modulo a finite system of finitely generated subgroups $H_1,\ldots ,H_k$ of $\Gamma _{R(S)}.$  

We can define terminal $\Gamma$-limit groups $L_1,\ldots ,L_k$ of these NTQ groups  as maximal limit quotients of coordinate groups of certain systems of equations.
Alternatively, when we need these NTQ groups to be toral relatively hyperbolic, we can replace $L_1,\ldots, L_k$  by their 
quasi-convex closures $M_1,\ldots, M_k$ (see Definition \ref{GW}).  Doing this  replacement we may add an extra level at the bottom of the NTQ group (in case $L_i$ has a splitting but not a sufficient splitting). \end{prop}
\begin{proof} We will give the proof  when the system of subgroups consists of only one subgroup $H$. Using canonical representatives we can construct a family of generalized equations $\Omega _1,\ldots , \Omega _k$ in the free group $F$   ($\pi :F\rightarrow \Gamma$) \cite{Imp} such that each solution of each $\Omega _i$ in $F$ (as a system of equations in the group) corresponds to  a solution of $S(Z,A)=1$ in $\Gamma$, and every solution of $S(Z,A)=1$ in $\Gamma$ corresponds to some solution of some $\Omega _i$ as a generalized equation (solution without cancellations).  We can run the Elimination process for each generalized equation modulo the pre-image of $H$ (generators of $H$ are included in the set of variables of $S(Z,A)=1$).  If a generalized equation corresponds to a generic family of solutions for a freely indecomposable  NTQ system or a freely indecomposable NTQ system modulo a subgroup, then all the splittings on all the levels of this NTQ system are detected in the Elimination process \cite{KMS} (we called it Makanin's process in \cite{Imp}) for the generalized equation, and  produce a corresponding NTQ system over a free group.  Moreover, the edge groups for these splittings are not trivialized in the re-working process described in Section \ref{section:HomDiagrams}. We will obtain in the Elimination process completed  fundamental sequences ending with groups ${K_i^*}$ without sufficient splitting modulo the group $H^*$ generated by $F$ and the variables corresponding to the generators of $H$.   The procedure for finding Sol-maximal $\Gamma$-limit quotients in described in  \cite{KMT}, Section 6.1.
\end{proof}

\section{Decision algorithm for $\forall\exists$-sentences}\label{sec:4}
In this section we will prove Theorem \ref{aetheory}.
 In the rest of the paper we will only consider fundamental sequences satisfying the first and second restrictions from
 \cite{KMel}, Sections 7.8, 7.9. To make this paper self-contained we recall these sections here.

 \subsection{First restriction on fundamental sequences}
\label{spl}  Let $S(Z,A)=1$ be a system of equations over $\Gamma$. We can assume that it is irreducible. We construct fundamental sequences for $S(Z,A)=1$ as in Proposition \ref{11}.  Let
\begin{equation}\label{3.5.} \Gamma _{R(\bar U)}*F(t_1,\ldots
,t_k)=P_1*\ldots *P_q*\langle t_1\rangle *\ldots *\langle t_k\rangle \end{equation} be a reduced
 free decomposition of a maximal shortening quotient $\Gamma _{R(\bar U)}*F(t_1,\ldots
,t_k)$ modulo $\Gamma$ for ${\Gamma}_{R(S)}$ (this shortening quotient is exactly the group corresponding to the second from the top level  of the corresponding fundamental sequence), and $\pi : \Gamma _{R(S)}\rightarrow \Gamma _{R(
\bar U)}\ast F(t_1,\ldots ,t_{\beta _i})$ the  epimorphism.  Let $P$ be the subgroup generated by the
variables (which we denote by $X$) and standard coefficients (that we denote by $C$) of a regular quadratic
equation $Q_{i}=1$ corresponding to some fixed MQH subgroup in the
JSJ decomposition of ${\Gamma}_{R(S)}.$ Consider a free decomposition $\pi
(P)=K_1*\ldots
*K_p*\langle t_{k_{j_1}}\rangle *\ldots *\langle t_{k_{j_2}}\rangle $ inherited from the free
decomposition (\ref {3.5.}) such that each standard coefficient is
conjugated into some $K_j$, and each $K_j$ has a conjugate of some
coefficient. Then there always  is a canonical automorphism that transforms
$X$ into variables $X_1$ with the following properties:

1) the family $X_1$ can be represented as a disjoint union of sets
of variables $X_{11},\ldots, X_{1t}$;

2) every solution of $S=1$ can be transformed by a canonical
automorphism corresponding to $Q_{i}=1$ into a solution of the
system obtained from $S=1$ by replacing $Q_i=1$  by  a system of
several quadratic equations $Q_{i1}(X_{11},C)=1,\ldots ,
Q_{it}(X_{1t},C)=1$ with standard coefficients from $C$;

3)  each quadratic equation $Q_{ij}=1$ either is coefficient-free, or
has coefficients from $C$ which are conjugated into some $K_r$;

4) $X_1^{\pi}$ is a solution of the system
$Q_{i1}(X_{11},C^{\pi_1})=1,\ldots , Q_{it}(X_{1t},C^{\pi_1})=1$;

5) if $Q_{ij}=1$ is coefficient-free, then $X_{ij}^{\pi }$ is a
solution of maximal possible dimension (= rank of the free group in the image) or  the corresponding surface group is a subgroup of $\Gamma$ (unless this surface group is embedded into $\Gamma$ in this fundamental sequence).

6) if  $Q_{ij}=1$ is not coefficient-free, then $Q_{ij}=1$ cannot be transformed by a canonical automorphism corresponding to $Q_{ij}=1$ into an equation 
$$Q_{ij1}(X_{ij1})Q_{ij2}(X_{ij2})=1$$ such that
$Q_{ij1}(X_{ij1})=1$ is coefficient-free and $Q_{ij2}(X_{ij2})=1$
has non-trivial coefficients from $C$ which are conjugated into some
$K_r$ and such that $X_{ij}^{\pi }$ is a solution of the system $Q_{ij1}(X_{ij1})=1$,
$Q_{ij2}(X_{ij1},C^{\pi})=1$.

 Suppose $Q_{ip}=1 $ is some equation in variables $X_{1p}$ in
this family which has coefficients from $C$. Each homomorphism in a
fundamental sequence of homomorphisms from $\Gamma _{R(S)}$ to $\Gamma $ is a
composition $\sigma _1\pi \phi$, where $\sigma _1$ is a canonical
automorphism of $\Gamma _{R(S)}$, and $\phi$ is a homomorphism from
$\Gamma _{R(\bar U)}*F(t_1,\ldots ,t_k)$ to $\Gamma$.

The {\em first restriction} is that we will include into the
fundamental sequence  only   the compositions $\sigma _1\pi \phi$
for which $\{\phi\}$ satisfies the following
property: for each $j$, $Q_{ij}=1$ is not split into a system of two
quadratic equations $Q_{ij1}(X_{ij1})=1$ and $Q_{ij2}(X_{ij2})=1$
with disjoint sets of variables such that $Q_{ij1}(X_{ij1})=1$ is
coefficient-free and $Q_{ij2}(X_{ij2})=1$ has coefficients from $C$ which are
conjugated into some $K_r$ and such that $X_{ij1}^{\pi\phi}$ is a
solution of $Q_{ij1}=1$ and $Q_{ij2}(X_{ij2})^{\pi \phi}=1$.  

This is equivalent to the following construction.  We cut each punctured surface $\Sigma$ corresponding to the MQH subgroup $Q$ in the JSJ decomposition of ${\Gamma}_{R(S)}$ along a  maximal collection of disjoint non-homotopic simple closed curves that corresponds to the lift of the free decomposition $F(t_1,\ldots
,t_k)*P_1*\ldots *P_q$ and are  mapped to the identity on the next level by $\pi$. Moreover, if $\Sigma '$ is a punctured surface obtained from $\Sigma$ and connected to  a rigid subgroup, then when we   adjoin disks to the  boundary components of  $\Sigma '$  that are mapped to the identity, no s.c.c. (simple closed curve) on that surface is mapped to the identity along the fundamental sequence. If $\Sigma '$ 
 is a punctured surface obtained from $\Sigma$ and not connected to  a rigid subgroup, then when we   adjoin disks to the  boundary components of  $\Sigma '$  that are mapped to the identity, we obtain a closed surface, and $\pi _1$ maps it to s free group of maximal dimension or embeds into $\Gamma$.
 
We require a similar property for all levels of the fundamental sequence.

\subsection{Second restriction on fundamental sequences} \label{rk3}
Suppose the family of homomorphisms $\sigma _1\pi _1\ldots \sigma
_n\pi _n\tau$ is a strict completed fundamental sequence, corresponding to the NTQ
system $Q(X_1,\ldots, X_n)=1:$
$$Q_1(X_1,\ldots ,X_n)=1,$$
$$\ldots $$
$$Q_n(X_n)=1$$
adjoint with free variables $t_1,\ldots, t_k$. Here the restriction of $\sigma _i$
on $\Gamma _{R(Q_i,\ldots,Q_n)}$ is a canonical automorphism on
$\Gamma _{R(Q_i,\ldots,Q_n)}$,  identical on variables from
$X_{i+1},\ldots ,X_n$ and on all free variables $t_i$ from the higher
levels, $\pi _i:\Gamma _{R(Q_i,\ldots,Q_n)}*F(t_1,\ldots
,t_{k_{i-1}})\rightarrow \Gamma _{R(Q_{i+1},\ldots,Q_n)}*F(t_1,\ldots
,t_{k_i}).$ The {\em dimension of the fundamental sequence} is the sum $k_1+\ldots +k_n.$

We only consider strict fundamental sequences. Recall, that, in particular, this means that we include in the fundamental sequence only such homomorphisms that
give nonabelian images of the   regular subsystems of ${Q_i}=1$ on
all  levels (the rest can be included into a finite number of
fundamental sequences), and the images
of the edge groups of the JSJ decompositions on all levels are
nontrivial.

We can  suppose that all fundamental sequences that we consider
satisfy the following properties. Let $\Gamma _{R(Q_i,\ldots ,Q_n)}$ be a
free product of some factors. Then
  \begin {enumerate}
\item [1)] the images of abelian factors   under $\pi _i$  are
different factors of $F(t_{k_{i-1}+1},\ldots ,t_{k_i})$; \item [2)]
the images under $\pi _i$ of factors which are surface groups are
different  factors of $F(t_{k_{i-1}+1},\ldots ,t_{k_i})$ or of $\Gamma$;
\item [3)] if some quadratic equation in $Q_i=1$ has free
variables in this fundamental sequence, then these variables
correspond to some variables among $t_{k_{i-1}+1},\ldots ,t_{k_i}$,
the images under $\pi _i$ of coefficients of quadratic equations
cannot be conjugated into $F(t_{k_{i-1}+1},\ldots ,t_{k_i})$; \item
[4)] different  factors in the free decomposition of
$\Gamma _{R(Q_i,\ldots ,Q_n)}$ are sent into different factors in the free
decomposition of $\Gamma _{R(Q_{i+1},\ldots
,Q_n)}*F(t_{k_{i-1}+1},\ldots,t_{k_i}).$
\end{enumerate}

\begin{prop}
\label{pr:solutions-special-Sb}  For a system of equations $S$ over $\Gamma$ one can effectively construct a
finite set of  strict completed fundamental sequences  that  satisfy  the first and second
restrictions above, such that every solution of $S$ factors through one of these fundamental sequences. These fundamental sequences correspond to a tree which we denote $T_{CE}(\Gamma _{R(S)})$ (canonical embedding tree).
\end{prop}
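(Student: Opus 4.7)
The plan is to start with the tree $\mathcal{T}(S,\Gamma)$ of strict fundamental sequences provided by Propositions \ref{Thm:EffectiveSolutions} and \ref{prop7}, and then refine each branch until the first and second restrictions are satisfied. Each violation of a restriction will be traded for finitely many strictly simpler fundamental sequences, so the process will terminate by a complexity argument.

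First I would handle the first restriction. Fix a branch and, working from the top level down, inspect each MQH subgroup $Q$ and its associated quadratic equation $Q_i = 1$. The free decomposition (\ref{3.5.}) of the next-level shortening quotient induces a canonical decomposition $\pi(P)=K_1*\cdots *K_p*\langle t_{k_{j_1}}\rangle *\cdots *\langle t_{k_{j_2}}\rangle$ of the image of $Q$. Following Section \ref{spl}, a canonical automorphism transforms the variables $X$ into a disjoint union $X_{11},\ldots,X_{1t}$, and replaces $Q_i=1$ by the corresponding system $Q_{i1}(X_{11},C)=1,\ldots,Q_{it}(X_{1t},C)=1$; we keep only those homomorphisms in the branch that are non-special and for which no further splitting of some $Q_{ij}=1$ into a coefficient-free part and a part with coefficients conjugated into one of the $K_r$'s is available. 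The homomorphisms we drop are exactly the ones for which such a splitting exists; for each splitting pattern we instead record a new fundamental sequence (with the split quadratic equations in place of $Q_i=1$) and place it in the collection. Because a split quadratic equation has strictly smaller complexity (its surface decomposes), and because the MQH datum on the level is then strictly simpler in the ``size'' ordering used in Proposition \ref{prop6}, each refinement step decreases the regular size at the level where it occurs.

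Next I would enforce the second restriction. Traversing the (now updated) branch again level by level, I check conditions (1)–(4) of Section \ref{rk3} on each $\pi_i$: images of abelian free factors must be distinct free factors of $F(t_{k_{i-1}+1},\ldots,t_{k_i})$; images of surface-group free factors must be distinct free factors or conjugate into $\Gamma$; images of coefficients of quadratic equations may not be conjugated into $F(t_{k_{i-1}+1},\ldots,t_{k_i})$; and distinct factors of $\Gamma _{R(Q_i,\ldots,Q_n)}$ must go to distinct factors of $\Gamma _{R(Q_{i+1},\ldots,Q_n)}*F(t_{k_{i-1}+1},\ldots,t_{k_i})$. A homomorphism violating any one of these factors through a fundamental sequence obtained either by identifying the corresponding factors, by collapsing an edge group carried to the identity, or by trivializing a free variable; in each case the result is again strict (after one more application of Proposition \ref{prop7}), and either lowers the regular size, lowers the Grushko rank of the next-level quotient, or lowers the number of free factors and variables introduced at that level. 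Hence only finitely many exceptional fundamental sequences are spawned per level.

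The coverage claim is automatic by construction: at each refinement step a dropped homomorphism is explicitly placed into one of the newly added fundamental sequences. The main obstacle, therefore, is termination. Termination follows by a lexicographic complexity argument: on each level the pair (regular size of the JSJ, tuple of dimensions $(k_1,\ldots,k_n)$, Grushko rank of the quotient, number of free factors) cannot decrease infinitely, and the total depth of $\mathcal{T}(S,\Gamma)$ is bounded by hierarchical accessibility for toral relatively hyperbolic $\Gamma$-limit groups. After finitely many iterations, the collection of fundamental sequences stabilizes into a finite set of strict fundamental sequences satisfying both restrictions, through one of which every solution of $S$ factors.
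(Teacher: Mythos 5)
Your proposal takes a genuinely different route from the paper, and it has a gap on exactly the point the proposition is about: \emph{effectiveness}.

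The paper's proof is very short. It observes that the first and second restrictions were already achieved for fundamental sequences over the free group $F$ in \cite{KMel}; that the construction of Proposition~\ref{prop7} transfers a well aligned fundamental sequence over $F$ to a strict fundamental sequence over $\Gamma$ while preserving these restrictions; and that the only new algorithmic issue is being able to check the restrictions on the $\Gamma$-side, which reduces to the word problem in $\Gamma$-limit groups sitting inside NTQ groups --- decidable because NTQ groups over $\Gamma$ are toral relatively hyperbolic. That one sentence is the actual content: there is nothing to re-prove about the combinatorics of the restrictions, only about being able to verify them.

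Your proposal instead reconstructs from scratch the combinatorial argument that one can iteratively split quadratic equations, collapse edges, and so on, while a lexicographic complexity drops. This duplicates work already packaged in \cite{KMel} (via Proposition~\ref{prop7}) rather than citing it, which is fine in principle but is a heavier route and your termination measure --- ``(regular size, $(k_1,\ldots,k_n)$, Grushko rank, number of free factors)'' with no specified comparison order and no proof that each refinement step strictly decreases it --- is not established. More importantly, nowhere do you address how to \emph{decide} any of the conditions you are checking: whether a coefficient is conjugated into some $K_r$, whether images of factors land in the same or different free factors, whether an edge group is carried to the identity. These are the decision problems the proposition is really asking for, and they are nontrivial over a hyperbolic $\Gamma$. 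The paper's proof exists precisely to point out that these all reduce to the word/conjugacy problem in toral relatively hyperbolic NTQ groups, which is decidable. Without that observation, your construction is a finite but not demonstrably \emph{effective} procedure, so the statement is not proved.
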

\begin{proof} Using Proposition \ref{11} we can construct a finite number of strict (completed) fundamental sequences over $\Gamma$ such that each solution of $S$ factors through one of them. In addition, if the corresponding fundamental sequence over a free group satisfies first and second restrictions, then the fundamental sequence obtained in Proposition \ref{11} can be also constructed  satisfying these restrictions. To do this algorithmically one only needs to solve the word problem in $\Gamma$-limit groups which are given as subgroups of NTQ groups. And the word problem in NTQ groups is solvable because they are toral relatively hyperbolic.\end{proof}

\begin{definition} We call  fundamental sequences satisfying the first and second restrictions {\em well aligned} fundamental sequences.\end{definition}

\subsection{Induced NTQ systems and fundamental sequences}\label{inher}

In this subsection we modify the construction of \cite{KMel}, Section 7.12 for a torsion free hyperbolic group $\Gamma$. Given an NTQ system $S=1$ over $\Gamma$, the corresponding NTQ group $\Gamma _{R(S)}$, and the well aligned completed fundamental sequence of solutions, we will construct the induced NTQ group, the NTQ system, and the well aligned completed fundamental sequence of solutions for a subgroup $K$ of $\Gamma _{R(S)}$.

Let $S=1$ be an NTQ system over $\Gamma$:

\medskip
$S_1(X_1, X_2, \ldots, X_n,A) = 1,$

\medskip
$\ \ \ \ \ S_2(X_2, \ldots, X_n,A) = 1,$

$\ \ \ \ \ \ \ \ \ \  \ldots$

\medskip
$\ \ \ \ \ \ \ \ \ \ \ \ \ \ \ \ S_n(X_n,A) = 1$

\medskip \noindent
and $\pi _i: \Gamma_i \rightarrow \Gamma_{i+1}$   a fixed
$\Gamma_{i+1}$-homomorphism (a solution of $S_i(X_1,\ldots ,X_n)=1$ in
$\Gamma_{i+1} = {\Gamma}_{R(S_{i+1},\ldots ,S_n)},$ $\Gamma_{n+1} = G$, which is a free product of $\Gamma$ and a  free group). % and a finite number of groups isomorphic to quasi-convex   subgroups of $\Gamma$.
Let $K$ be a finitely generated subgroup (or $\Gamma$-subgroup) of
$\Gamma_{R(S)}.$ Then there exists a system $W(Y) = 1$ such that  $ K =
\Gamma_{R(W)}.$  We will describe here how to embed $K$ more economically into an
NTQ group $\Gamma_{R(Q)}$ such that ${\Gamma_{R(Q)}}\leq _{quasi\ isom.}\Gamma_{R(S)}$ and assign to
$Q=1$ a fundamental sequence that includes all the solutions of
$W=1$ that factor through the  well aligned  fundamental sequence for $S=1$ that we started with.

Canonical automorphisms of $\Gamma _{R(Q_i,\ldots Q_n)}$ on different levels for $Q=1$ will be
induced by canonical automorphisms for $S=1$, mappings between
different levels for $Q=1$ will be induced by mappings for $S=1.$
 
 Without loss of generality we can suppose
that $\Gamma_{R(S)}$ is freely indecomposable modulo $\Gamma$.  
 The top
quadratic system of equations $S_1(X_1,\ldots ,X_n)=1$ corresponds
to a splitting $D$ of $\Gamma_{R(S)}$. The non-QH non-abelian vertex subgroups
of $D$ are factors in a free decomposition of $\langle X_2,\ldots ,X_n\rangle .$
Consider the induced splitting of  $K$ denoted by $D_K$. This
splitting may give a free factorization $K= K_1*\ldots *K_k,$ where
$\Gamma\leq  K_1$. Consider each factor separately.  Consider $K_1$. Each edge $e$ in the decomposition of $K_1$ (induced by $D_K$ on the free factor $K_1$) that connects two rigid vertex groups is composed from  two edges $e_1$ and $e_2$ that are adjacent and both are in the orbit of the same edge $\bar e$ in the Bass-Serre tree corresponding to the graph of groups $D$. Moreover, $\bar e$ connects a rigid vertex group to an abelian vertex group in $D$, because if it connects 
a rigid vertex group to a QH subgroup, then  a QH subgroup would appear between two rigid vertex groups instead of edge $e$. Increasing $ K_1$ by
a finite number of suitable elements from abelian  vertex groups of
$\Gamma_{R(S)}$ we join together non-QH non-abelian subgroups of $D_K$
which are conjugated into the same non-QH non-abelian subgroup of
$D$ by elements from abelian vertex groups in $\Gamma_{R(S)}.$ Moreover, we can choose these elements $a$ in abelian subgroups in such a way that their images on the next level are trivial. Indeed, each abelian subgroup $A$ is the direct product of the isolator of the edge group $A_1$ (in a primary decomposition this isolator is $A_1$ itself) and a free abelian subgroup $A_2$. This conjugating element $a$ belongs to $A_2$ and, therefore, is mapped on the next step to (the image of) $A_1$, say $a_1\in A_1$. Then $a_1^{-1}a$ is the desired element that is mapped to the identity. When we add this element to $K$ instead of the  two non-QH non-abelian subgroups in $D_k$ considered above we have one non-QH non-abelian subgroup. This way we obtain a group $\bar K_1$ such that $D_{\bar K_1}$ does not
have edges between non-QH non-abelian subgroups, and generators of
edge groups connecting non-QH non-abelian subgroups to abelian
subgroups not having roots in $\langle X_2,\ldots ,X_N\rangle .$  We do the same if an abelian vertex group is connected to two rigid subgroups. Denote the group that we obtained by $\hat K_1$.

Since we will be considering only well aligned fundamental sequences,  we fix the family of s.c.c. in the QH subgroups of $\hat K_1$ that are mapped to the identity by $\pi _1$, so that the corresponding quadratic equations split into systems satisfying  the first restriction.  (We will identify s.c.c. on the surface with corresponding elements in the QH subgroup.) This corresponds to a collection of s.c.c. which are the pre-images of the collection of s.c.c. in
a QH subgroup of $\Gamma_{R(S)}.$ We will add conjugating elements so that each punctured surface connected to a rigid subgroup in the  decomposition refined by splittings along these s.c.c, is connected to a unique rigid subgroup.  Now all rigid subgroups that are mapped to the same free factor on the next level of ${\Gamma}_{R(S)}$, are conjugate into one subgroup.
The images of these elements in $\Gamma _{R(S)}$ are mapped to the identity by $\pi _1$. Denote the obtained group by $\tilde K_1$.  We have  $\pi _1(K)=\pi _1(\tilde K_1).$

Now
consider separately each factor in the free decomposition of $\pi _1(K)\cap \langle X_2,\ldots
,X_n\rangle $ and enlarge it the same way. Working similarly with each
$\Gamma_i$ we consider all the levels of $S=1$ from the top to the bottom. 
%At the bottom, $G$ may be a free product. We replace the
%subgroups of $\Gamma$ in the bottom level of the image of $K_q$  that are conjugate into subgroups of $\Gamma$ in the free product $G$ by corresponding subgroups of $\Gamma$ in $G$. 
We  obtain a subgroup generated by $\tilde K_1,\ldots , \tilde K_k$ and the enlarged images of them on all the levels, and   denote it by $H_1$. 

Then we repeat the whole
construction for $H_1$ in place of $K$, obtain $H_2$ and repeat the
construction again. We will eventually stop, namely obtain that
$H_i=H_{i+1}$, because every time when we repeat the construction if $H_i\neq H_{i+1}$ then there is some level $j$ such that on all the levels higher than $j$ the decompositions are the same as in the previous step and on level $j$ 
one of the following characteristics decreases:
\begin{enumerate}\item the number of free factors in the free
decomposition on  level $j$ of $H_i,$  \item if the number of free
factors does not decrease, then the number of edges and vertices of
the induced decomposition on  level $j$ of $H_i$ decreases,
\item if the number of free factors in the free decomposition on
 level $j$ of $H_i$  and the number of edges and vertices does not
decrease, then the size of the decomposition of level $j$ of $H_i$ is decreased.
\end{enumerate}
 If there are  freely indecomposable factors  of $K^{\pi _1\ldots \pi _n}$ that are isomorphic to subgroups of $\Gamma$, then we treat them the same way as we treated such factors constructing canonical fundamental sequences. Namely we add the level where there is a free product of these factors and on the following level we have a fixed embedding of each factor into a conjugate of $\Gamma$. Extend it to its  quasi-convex closure, and complete to the NTQ group as in Section \ref{complete}.
The corresponding NTQ system is
$Q=1$ such that $K\leq \Gamma_{R(Q)}\leq _{quasi\  isom.}\Gamma_{R(S)}$.  Each QH subgroup of $\Gamma_{R(Q)}$
is a finite index subgroup in some QH subgroup of  $\Gamma_{R(S)}$. Canonical automorphisms corresponding to QH subgroups of $\Gamma _{R(Q)}$ well be induced by canonical automorphisms of $QH$ subgroups of $\Gamma _{R(S)}$. The image of the
top $j$ levels of $\Gamma_{R(Q)}$ on the level $j+1$ is contained in  the
image of $K$ on this level. It is, actually, $K^{\pi _1\ldots \pi_j}$.   This NTQ system $Q=1$ is called the {\em induced NTQ system } and the corresponding well aligned  fundamental sequence is called the {\em induced fundamental sequence}. 

Similarly, if we have a fundamental sequence (and an NTQ system) modulo
a subgroup we  can define the induced fundamental sequence (and the NTQ
system)  modulo a subgroup $H$.

There is an algorithm for this  construction over a free group, that can be used to construct the induced completed fundamental sequence over $\Gamma$ as in \cite{KMT}, Proposition 9.  The terminal $\Gamma$-limit  group of  such induced completed fundamental sequence  is given by its generators inside a $\Gamma$-NTQ group. Actually,  for the decidability of the $\forall\exists$-theory of $\Gamma$ we do not need this algorithm,  we only need  the existence of induced fundamental sequences.

\subsection{First step}\label{aex}
We will now describe the algorithm for construction of the $\forall\exists$-tree.
Consider the sentence
\begin{equation}\label{ae}
\Phi=\forall  X\exists Y (U(X,Y)=1\wedge V(X,Y)\neq 1)\end{equation}

If the sentence is true then  there exists a solution of
$ U(X,Y)=1\wedge V(X,Y)\neq 1$ in $F(X)*\Gamma$. By (\cite{D1}, Theorem 0.1) there is an algorithm to find such a solution $Y=f(X)$. To check whether the sentence (\ref{ae}) is true we now have to check it only for  those values of $X$ for which  $V(X,f(X))=1.$  Denote $U_0(X)=V(X,f(X)).$

%{\bf Remark.} By Proposition \ref{relh}, we can effectively find all formula solutions $Y=f(X)$ of  $U(X,Y)=1$ in $F(X)*\Gamma$ that are collected in a finite number of strict fundamental sequences. To check whether the sentence (\ref{ae}) is true we now have to check it only for  those values of $X$ for which for any formula solution $y=f(X)$ we have $V(X,f(X))=1.$ Since this should happen for all formula solutions in each fundamental sequence of formula solutions, we apply the implicit function theorem again and get a system of equations on the terminal groups of these fundamental sequences.
%Therefore  we can effectively construct  a system of equations $U_0(X)=1$ on the values of $X$ for which the sentence still has to be verified.

 Let $G=\Gamma _{R(U_0)}$.
%where $U_0=1$ is such a system.  
We define now a tree
$T_{EA}(\Phi)=T_{EA}(G)$ oriented from the root, and assign to each vertex of
$T_{EA}(G)$ some set of homomorphisms from $G$ to $\Gamma$. We assign the set of all homomorphisms  $G\rightarrow \Gamma$ to the initial vertex $\hat v_{-1}$.
 We can
construct algorithmically a finite number of $\Gamma$-NTQ systems  corresponding to branches $b$ of the canonical $Hom$-diagram described in Proposition \ref{11},   (denote the system corresponding to the branch $b$ by $S(b)$), their corrective extensions: $S_{corr}(b)=1$
($S_{corr}(b):S_1(X_1,\ldots ,X_n)=1,\ldots ,S_n(X_n)=1$), and corresponding  well aligned
fundamental sequences $Var_{\rm fund}(S(b))$. For each such fundamental
sequence we assign a vertex $\hat v_{0i}$ of the tree $T_{EA}(G)$.
We draw an edge from
 vertex $\hat v_{-1}$  to each vertex corresponding to $Var_{\rm fund}(S(b)).$

 Let the fundamental sequence 
$Var_{\rm fund}(S(b))$ be assigned to $\hat v_{0i}$. Since every branch of the tree will be constructed independently of the others we will now describe the construction for $Var_{\rm fund}(S(b))$.  By Theorem \ref{IFT} and by Proposition \ref{relh}, we can algorithmically find
values of $Y$ given by  formulas $Y=f(X_1,\ldots ,X_n)$ in 
$X_1,\ldots ,X_n$ taking values in  ${\Gamma}_{R(S(b))}$.  Indeed, ${\Gamma}_{R(S(b))}$ is and its corrective extensions are toral relatively hyperbolic, and the roots that have to be added to ${\Gamma}_{R(S(b))}$ to obtain a corrective extension can be found algorithmically using Dahmani's construction of canonical representatives for toral relatively hyperbolic groups \cite{D1}. 

Sentence (\ref{ae}) is now verified for all values of $X$ in $Var_{\rm fund}(S_{corr}(b))$ except those for which 
  we have $V(X_1,\ldots ,X _n, f(X_1,\ldots ,X_n))=1.$  As on the previous step this gives an equation
$U_1(X_1,\ldots ,X_n)=1$ which is a disjunction of irreducible equations, so we assume it is irreducible. 
%which is equivalent to a system of all
%equations $V(X_1,\ldots ,X_n, f(X_1,\ldots ,X_n))=1.$ 
We will only consider values of $X_1,\ldots ,X_n$ satisfying this system that are minimal in $Var_{fund}(S(b))$ with respect to canonical automorphisms on all the levels modulo the image of $G$  in $S(b)$.  This is enough because if for a specialization $X^{\phi}$ there exists a minimal specialization of $X_1^{\phi},\ldots ,X_n^{\phi}$ that does not satisfy this system, then the sentence is true for $X^{\phi}$. This assumption implies another equation on   $X_1,\ldots ,X_n$ (that can be found algorithmically by Proposition \ref{12}). We assume  that we included this other equation  into $U_1(X_1,\ldots ,X_n)=1$. 

Let $Var_{\rm
fund}(U_1)$ be the subset of homomorphisms from the set $Var_{\rm
fund}(S(b))$ going through the corrective extension 
$S_{corr}(b)=1$, minimal with respect to the canonical automorphisms modulo the image of $G$ on all levels, and satisfying the additional equation
$U_1(X_1,\ldots ,X_n)=1$, and embedding the conjugates of subgroups of $\Gamma$ in $G$. We introduce a vertex $\hat v_{1i}$ for each such equation and
draw an edge connecting $\hat v_{0i}$ to $\hat v_{1i}$. 

Let $K$ be a finitely generated group. Recall that any family of homomorphisms $\Psi=\{\psi _i:K\rightarrow \Gamma \}$ factors through a finite set of maximal fully residually $\Gamma$ groups $H_1,\ldots , H_k$ (= $\Gamma$-limit groups) that all are quotients  of $K$.  We first take a quotient $K_1$ of $K$ by the intersection of the kernels of all homomorphisms from $\Psi$, and then construct maximal fully residually $\Gamma$ quotients $H_1,\ldots , H_k$ of $K_1$. We say that $\Psi$ {\em discriminates} groups  $H_1,\ldots , H_k$, and that each $H_i$ is {\em a fully residually $\Gamma$ group discriminated by $\Psi .$}

 Let $G_1$ be a fully residually $\Gamma$ group discriminated by the set of homomorphisms
$Var_{\rm fund}(U_1)$ (we do not need to know effectively its presentation). Consider the family  of  fundamental
sequences for $G_1$  modulo the images $R_1,\ldots, R_s$ of the factors in the free  decomposition of the subgroup $H_1=\langle X_2,\ldots, X_n\rangle $. (We say that a fundamental sequence is constructed modulo some subgroups of the coordinate group  if these subgroups are elliptic in the JSJ decompositions on all levels in the construction of this fundamental sequence.) We know the generators and relations of  $R_1,\ldots, R_s$ and by Proposition \ref{12} can effectively construct these fundamental sequences.
 We only consider well aligned canonical fundamental sequences $c$ for $G_1$ modulo the images of $R_1,\ldots, R_s$  (corresponding to coefficients of quadratic
 equations $S_1=1$ of the top level for $S_{corr}(b)=1$) with, in particular, the following properties: 
 
 (1) They have dimension less than or equal to
 $k_1$,  
 
 (2) The  edge groups for edges connected to $R_1,\ldots, R_s$ are not mapped to the identity,  
 
 (3) The homomorphisms embed the images of the terminal non-cyclic freely indecomposable factors of $Var _{fund}(S(b))$ into $\Gamma$, 
 
 (4)  For each QH vertex group $Q$  in the abelian decomposition of the top level of $F_{R(S_{corr})},$ the boundary elements of $Q$ are congugate into the non-QH non abelian subgroups of the top level of $F_{R(S_{corr})}.$ Hence  the images  of  $Q$  have cyclic decompositions induced by decompositions of different levels of $c$. These decompositions have to be compatible with  the decompositions of surfaces corresponding to
 quadratic equations of $S_1$,  by a collection of simple closed curves mapped to the identity. Namely, if we refine
 the JSJ decomposition of $G$ by adding splittings
corresponding to the simple closed curves that are mapped to the identity when $G$ is mapped to the free product in Subsection \ref{spl}, then the
standard coefficients on all the levels of $c$ are images of
elliptic elements in this decomposition.

Suppose a fundamental sequence $c$ has the top dimension component
$k_1$ (Since we only consider well aligned fundamental sequences, we would not consider $c$ if it had the top dimension greater than $k_1$). If the NTQ system corresponding to the top level of the sequence
$c$ is the same as $S_1=1$, we extend the fundamental sequences
modulo $R_1,\ldots ,R_s$ by canonical fundamental sequences  for
$H_1$  modulo  the factors in the free decomposition of the subgroup
$\langle X_3,\ldots ,X_n\rangle $. If such a sequence has dimension greater than
or equal to $k_2$, then the corresponding solution can be factored
through a fundamental sequence for $U=1$ of
 the greater dimension.
Again, we only consider such sequences of dimension less than or equal
to $k_2$. If the sum of the first two dimensions is strictly smaller
than $k_1+k_2$, we do the same as in the case when the first dimension
is smaller than $k_1$ (see below). We continue this way to construct
fundamental sequences $Var_{\rm fund}(S_1(b))$. We draw edges of the tree $T_{EA}(G)$ from the
vertex corresponding to $Var_{\rm fund}(U_1)$ to the vertices $Var_{\rm
fund}(S_1(b))$.

 Suppose now that the fundamental sequence $c$ for $G_1$ modulo
$R_1,\ldots ,R_s$ has dimension strictly less than $k_1$ or has
dimension $k_1$, but the NTQ system corresponding to the top level of
$c$ is not the same as $S_1=1$. Suppose also that $G\neq \Gamma$. Then we
use the following lemma (in which we suppose that $R_1,\ldots ,R_s$
are non-trivial).

\begin{lemma}\label{prop}The image $G_{t}$  of $G$ in
the group $H_{t}$ appearing on the terminal level $t$ of the sequence $c$ is a
proper quotient of $G$ unless each of the homomorphisms that factor through $H_t$ is an embedding of each of freely indecomposable factors of $H_t$ into $\Gamma$.
\end{lemma}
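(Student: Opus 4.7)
The plan is to argue by contrapositive. Suppose some homomorphism $\phi$ in the discriminating family $V_{\rm fund}(U_1)$ factors through $H_t$ but fails to be an embedding on one of the freely indecomposable factors $A_j$ of $H_t$. I will show that, under the stated hypotheses that $c$ has dimension strictly less than $k_1$ (or top level different from $S_1$) and $G\neq \Gamma$, the canonical map $G\to G_t$ must have non-trivial kernel, so that $G_t$ is a proper quotient of $G$.

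First I would use Proposition \ref{prop8} together with the strict and well-aligned properties of $c$ to describe the terminal level: $H_t$ has no sufficient splitting modulo the rigid peripheral data $R_1,\ldots,R_s$, and decomposes as a free product $H_t=A_1*\cdots *A_m$ of freely indecomposable factors, each of which is rigid/solid relative to the peripheral subgroups (in Sela's terminology). Next I would analyze the image $G_t\subseteq H_t$ via the induced NTQ construction of Section \ref{inher}: by that construction the freely indecomposable factors of $H_t$ arise precisely to accommodate $G_t$, so each $A_i$ has non-trivial intersection with a conjugate of $G_t$. Since $\phi|_{A_j}$ has non-trivial kernel and $A_j$ is rigid/solid, $\phi$ factors through a proper quotient $H_t\twoheadrightarrow H_t/N$ with $N$ normally generated by $\ker(\phi|_{A_j})$; combined with the fact that $V_{\rm fund}(U_1)$ discriminates $G_1$ (and hence $G$), this should produce a non-trivial element of $G$ in the preimage of $N\cap G_t$, which is killed by the composite $G\to G_t\hookrightarrow H_t\xrightarrow{\phi}\Gamma$ and, by a minimality argument on the dimension of $c$, by all members of the family.

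The main obstacle will be the bookkeeping in the last step: one must combine the dimension restriction on $c$ with the well-alignment restrictions of Sections \ref{spl} and \ref{rk3} and the strictness from Proposition \ref{prop7} to exclude the possibility that $\ker(\phi|_{A_j})$ lies entirely outside the image of $G$. The induced NTQ construction of Section \ref{inher} is the key tool here, as it ensures that $G_t$ genuinely spans the free factors $A_i$ in a way compatible with the splittings, so that a collapse on $A_j$ indeed collapses part of $G_t$. The exceptional case of the lemma — in which every freely indecomposable factor of $H_t$ is already embedded into $\Gamma$ by every member of the family — is precisely the degenerate situation where no such reduction is available, and this is exactly what the ``unless'' clause records.
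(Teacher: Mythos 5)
Your approach is genuinely different from the paper's, and it has a gap that I do not think can be closed along the route you sketched.

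The paper's proof is much more structural: it assumes for contradiction that $G_t\cong G$, notes that the abelian JSJ decomposition $D_t$ of $H_t$ then induces an abelian decomposition of $G$ in which the peripheral subgroups $R_1,\ldots,R_s$ are elliptic, and observes that this forces a nontrivial induced splitting of some free factor $P_i$. That induced splitting allows the homomorphisms in $c$ to be shortened by canonical automorphisms of $P_i$ modulo $\{R_1,\ldots,R_s\}$ --- contradicting the minimality built into the choice of $c$. The ``unless'' clause marks exactly the case where $D_t$ gives nothing to shorten because all freely indecomposable factors are already embedded in $\Gamma$. No individual homomorphism $\phi$ is ever singled out; the whole argument lives at the level of the JSJ decomposition and the shortening machinery.

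Your proposal instead works from a single witness $\phi\in V_{\rm fund}(U_1)$ with $\ker(\phi|_{A_j})\neq 1$ and tries to promote this to a nontrivial element of $\ker(G\to G_t)$. There are two independent gaps. First, as you acknowledge, there is no reason $\ker(\phi|_{A_j})$ should meet a conjugate of $G_t$; pointing to the induced NTQ construction and the dimension restriction is not an argument, since the induced construction only guarantees that $G_t$ intersects each $A_i$, not that it meets an arbitrary normal subgroup of $A_j$. Second, and more seriously, even if you produce $g\in G$ with nontrivial image in $G_t\cap N$ and hence $\phi(g)=1$ for this one $\phi$, that does not put $g$ in $\ker(G\to G_t)$: you would need \emph{every} homomorphism $\psi$ in the discriminating family to kill $g$, because $G_t$ is the quotient of $G$ discriminated by the whole family. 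The phrase ``by a minimality argument on the dimension of $c$, by all members of the family'' is doing all the work and is unsupported --- a different $\psi$ in the family may well be injective on $A_j$ or may have kernel disjoint from the image of $g$. In short, a proof of this lemma must work uniformly over the discriminating family, which is exactly what the paper's shortening-of-canonical-automorphisms argument achieves and your single-$\phi$ approach does not.
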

\begin{proof} Consider the terminal group of $c$; denote it $H_t$.
Suppose $(G)_{t}$ is isomorphic to $G$. Denote the abelian JSJ
decomposition of $H_t$ by $D_t$ (we mean the free decomposition and then decompositions of free factors). Then there is an abelian
decomposition of $G$ induced by $D_t$. Therefore rigid (non-abelian
and non-QH) subgroups and edge groups of
$G$ are elliptic in this decomposition.

By Proposition \ref{10}, there exists a decomposition of some free factor $P_i$ of $H_t$ which is induced from
$D_t$ (because not all such factors are embedded into $\Gamma$).  But this is impossible because this  means that
the homomorphisms we are considering can be shortened by applying
canonical
automorphisms of $P_i$ modulo those subgroups from $\{R_1,\ldots ,R_s\}$ which are conjugated into $P_i$ (since $Var _{fund}(U_1)$  contains only homomorphisms minimal with respect to canonical automorphisms modulo the image of $G$ on all levels, these subgroups must be also elliptic in $D_t$).

\end{proof}

Often we may get a proper quotient of $G$ on some level of $c$ that is not a terminal level. For example, if the Kurosh rank (the number of freely indecomposable factors in the Grushko decomposition plus the rank of the free group) on some level $l$  is larger than the Kurosh rank of $G$, then the natural image $G_l$  is a proper quotient of $G$.  We can effectively find the set of completed fundamental sequences for the terminal group of $c$ and induced (from these sequences) fundamental sequences and NTQ groups  for $G_t$ (or $G_l$ if we found such a level $l$ algorithmically). We consider   fundamental sequences   for these groups (we include all the automorphisms of QH subgroups in these fundamental sequences, not only the induced ones). We do not include those fundamental sequences where  we obtained proper quotients of subgroups of $\Gamma$ (the corresponding homomorphisms factor through other fundamental sequences assigned to other vertices $\hat v_{0,i}$ of the $T_{EA}(G)$).   Denote the complete set  of  these fundamental sequences  $G_t\rightarrow \Gamma$ by $\mathcal F$. One can extract from $c$ modulo the terminal
level  the induced  well aligned fundamental sequence for $G$.  Denote this induced fundamental sequence  by $c_2$. We consider a fundamental sequence $c_3$ that consists of homomorphisms obtained by the composition
  of a homomorphism from $c_2$  and a homomorphism from a fundamental sequence
  corresponding to one of the branches $b_2$  int he family $\mathcal F$. Consider the {\em block-NTQ group}
 $\bar G$  generated by all the  levels of the NTQ group corresponding
 to the fundamental sequence $c$ (or up to level $l$ if this is the level where we detected a proper quotient of $G$), and  the NTQ group  corresponding
 the fundamental sequence  $c_3$  and amalgamated along the common part.   We can construct $\bar G$ algorithmically using \cite{KMT}, Proposition 6.  There will be, actually several such groups, because $G$ can be  mapped to the terminal level by a natural projection  but also by a natural projection shifted by automorphisms on each level of $c$
(there can only be a finite number of such shifted induced fundamental sequences). 
  
One can apply Theorem \ref{IFT} to the NTQ group $N$ corresponding to $c_3$ and get formula solutions of $V(X,Y)=1$ in a corrective extension of $\bar G$ (almost as in \cite{Imp}, Section 7.5, except that the formula solution exists in some extension of $N$ which is not just a corrective extension, but may have  QH subgroups of $N$  extended  to finite index subgroups of the corresponding QH subgroups of $c$).    Assign a vertex $\hat v_{2ik}$ of the tree $T_{EA}(G)$ to sequence $c_3$.
 We draw an edge from the vertex $\hat v_{1i}$ of $T_{EA}(G)$ corresponding to $Var_{fund}(U_1)$ to $\hat v_{2ik}$.  All the vertices  $\hat v_{2ik}$ are coming out of  $\hat v_{1i}.$ By Theorem \ref{IFT} we can find formula solutions of $U(X,Y)=1$ over this extension of $\bar G$. Those formula solutions for which $V(X,Y)=1$ 
 (if exist) will give an additional equation $U_2=1$ for generators of this extension of $\bar G$.

\subsection{Second step}\label{2nd}
We will describe the next  step in the construction of $T_{EA}(G )$
which basically is general. Fundamental sequences and block-NTQ groups obtained on the second step will be assigned to vertices $\hat v_{3jks}$ of the tree $T_{EA}(G)$. 

 Let $c_3$ be a
fundamental sequence corresponding to some vertex $\hat v_{2ik}$
 of $T_{EA}(G)$, let $c$ be, as before, the corresponding canonical fundamental sequence for $G_1$
 modulo $R_1,\ldots ,R_s$. Consider the set of those minimal homomorphisms from $\bar G$ to $\Gamma$
 which are going through the fundamental sequence $c_3$, which  factor through a corrective extension,  and satisfy the additional equation $U_2=1$.
 Let $G_2$ be one of the fully residually $\Gamma$ groups discriminated by this set.

 Let  $G^{(2)}$ be the image of $G$ in $G_2$.
The family of solutions with which we continue satisfies the first and second restrictions, and this property can be verified algorithmically.

Suppose  the JSJ decomposition for the NTQ system corresponding to the top level of $c$ corresponds to
the equation $S_{11}(X_{11},X_{12},\ldots )=1;$ some of the
variables $X_{11}$ are quadratic, the others correspond to
extensions of centralizers.
 Construct a fundamental sequence $c^{(2)}$ as in Proposition \ref{prop7} for $G_2$ modulo  the factors in the free decomposition of the subgroup generated by
$X_{12},\ldots  $.

Suppose $c$ is not the same as the top level of $S(b)$.  Denote by $N_0^1$ the image of the subgroup generated by $X_1,\ldots
,X_n$ in the group discriminated by $c$. So, $N_0^1= \langle X_1,\ldots
,X_n\rangle _c.$ Denote by $N_0^2=\langle X_1,\ldots ,X_n\rangle _{c^{(2)}}$ the image of
$\langle X_1,\ldots ,X_n\rangle $
 in the group discriminated by $c^{(2)}$. If the images of  some edge groups of $N_0$ are trivial in $N_0^2$ or the images of some non-abelian vertex groups are abelian (we have an algorithm to check this), then $N_0^2$ is a proper quotient of $N_0^1$, and  we have another ``easy'' case.
 In this case we take the quotient of $N_0$ adding these collapsing relations  of $N_0^2$,
instead of $G_2$, and assign its fundamental sequences to $\hat v_{3jks}$, and we do not consider vertices corresponding to NTQ systems with the same top level as the NTQ system for $c_3$.
 
In all the cases below we suppose that there are no collapses of $N_0$ in $N_0^2$.

{\bf Case 1.} If the top levels of $c$ and $c^{(2)}$ are the same, then
we go to the second level of $c$ and consider it the same way as the
first level.

{\bf Case 2.}  If the top levels of the NTQ system for $c$ and $S_1$ are the same
(therefore $c$ has only one level). We work with $c^{(2)}$ the same
way as we did for $c$. Then  the image of  $G$  on the terminal  level, say,  
$k$ of $c^{(2)}$ is a proper quotient of $G$ by
Lemma \ref{prop}. If at some point the sum of dimensions for
$c^{(2)}$ is not maximal, we amalgamate the fundamental sequence induced by the top part of $c^{(2)}$ 
above level $k$ for $G$, and each fundamental sequence induced by this quotient (solutions will go along the first fundamental sequence from the top level to level $k$ and then continue along one of the fundamental sequences induced by  the group on level $k$, with non-reduced canonical groups of automorphisms for QH subgroups).  We assign each of these fundamental sequences to a vertex  $\hat v_{3jks}$, then take the family of corrective extensions. Then we construct the block-NTQ group as we did on the first step, denote it by $N_2$. We also assign $N_2$  to the vertex  $\hat v_{3jks}$.

{\bf Case 3.}  If the top levels of the NTQ system for $c$ and $S_1$ are  not  the same and
the top levels of $c$ and $c^{(2)}$ are not the same, then we look at  $N_0^2$ and $N_0^1$.  %If $N_0^2$ is a proper quotient of $N_0^1$,  then we apply step 1 to $N_0^2$ instead of $G_1$.  Doing this, we do not have to consider  those fundamental sequences $c$ that have only one level.

Then we can suppose that the image $N_{0,t}^1$ of
 $N_0^1$  on the terminal level of $c^{(2)}$  
 is a proper quotient.  Suppose this terminal level is level $k$.
  
 Consider
fundamental sequences  induced by $N_{0,t}^1$ modulo the images of subgroups $R_1,\ldots, R_s$  with non-reduced, canonical groups of automorphisms for QH subgroups, and apply to them step 1. Denote the obtained fundamental sequences  by $f_i$.
Construct fundamental sequences for the subgroup generated by the images of 
$X_1,\ldots ,X_n$ with the top part being induced from the top
part of $c^{(2)}$ (above level k) and bottom part being some $f_i$,
but not the sequence with the same top part as $c$. We construct a block-NTQ group amalgamating the top $k-1$ levels of $c^{(2)}$ and the block-NTQ group constructed for $f_i$ as on the first step. There exists a formula solution over the corrective extension of this group. 

If all the levels of $c$ and $c^{(2)}$ are the same, so we never have cases 2 and 3,  and the same happens on all levels of the block-NTQ group $\bar G$ (see step 1), then 
the block fundamental sequence consists of a sequence of induced fundamental sequences
for $G$ and its images. Denote it $\overline c_2$. For each level of $\overline c_2$ there is an abelian decomposition. 
 Denote by $G_{corr}$ the corrective extension of the group corresponding to $\overline c_2$.  Denote  the fundamental sequence induced from $c$ and its continuation  by $c_4$.  There exists some level $k$ such that the abelian decompositions for the NTQ group  for $c_4$ will coincide with abelian decompositions for  the NTQ group for $\overline c_2$  for levels above $k$, and on level $k$ either the number of free factors in the free decomposition  for $c_4$ is less than this number for   $\overline c_2$, or the number of factors is the same, but the  regular size of the decompositions (lexicographically ordered tuple $(size (Q_1),\ldots, size (Q_m))$  of sizes of  MQH subgroups)  for $c_4$ is  smaller  than that  for $\overline c_2$, or the regular sizes are the same but the abelian size  of the decompositions  for $c_4$ is  smaller  than that for $\overline c_2$.  Here, if $R$ is an abelian decomposition,
by $ab(R)$ (the abelian size) we denote the sum of the ranks of abelian vertex groups
in $R$ minus the sum of the ranks of the edge groups for the edges
from them.

We will take $c_4$ to the next step instead of $\overline c_2$.

\subsection{General step}\label{nth}
We now describe  the $n$'th step of the construction. Denote by
$N_i$ the block-NTQ group constructed on the $i$'th step, and by
$N_i^j, j> i$ its image  on the $j$'th step. 
Fundamental sequences and block-NTQ groups obtained on the step $n-1$ are assigned to vertices $\hat v_{njks\ldots }$ of the tree $T_{EA}(G)$. 

Let $\{j_k,\
k=1,\ldots ,s\}$ be all the indices for which the top level of
$N_{j_k+1}$ is different from the top level of $N_{j_k}$.

If  some of the groups  $N_{j_k}^{j_k+1}$  have collapses 
in $N_{n-1}^n$ (we can check this the same way as we checked whether the image of $N_0^1$ has collapses Step 2,  Case 3), then we replace the first such group by its proper
quotient in $N_{n-1}^n$ and consider only the fundamental sequences
that have the top level different from $N_{j_k}.$ In all other cases
we can suppose that  $G$ and all the groups $ N_{j_k}^{j_k+1}$ do not have any collapses  in $N_{n-1}^n.$

{\bf Case 1.} The top levels of $c^{(n)}$ and $c^{(n-1)}$ are the
same. In this case we go to the second level and consider it the
same way as the first level.

If going from the top to the bottom of the block-NTQ system, we do
not obtain the case considered above or Cases 2, 3 and all the levels of the top
block of $N_{n-1}$ and $N_n$ are the same, we  
consider the group $G_{corr}$ which was constructed for the induced
fundamental sequence corresponding to the homomorphisms from $G$
going through $N_{n}$, and the fundamental sequence induced by $c^{(n)}$ for this group as we did on the second step when cases 2 and 3 were not applicable on all the levels of $c^{(2)}$.

{\bf Case 2.} The top levels of $c^{(n-1)}, c^{(n-2)},\ldots ,
c^{(n-i)}$ are the same, and the top levels of $c^{(n-1)}$ and $c^{(n)}$
are not the same. Then on the terminal level $p$ of the NTQ group for $c^{(n)}$ we can suppose
that the image of $N_{n-i-1}^{n-i}$ is a proper quotient,   or the
fundamental sequence goes through another branch constructed on the
previous step. Consider fundamental sequences  $f_i$ induced by this
quotient modulo its rigid subgroups  (with non-refuced canonical groups of automorphisms corresponding to QH subgroups) and apply to them  the procedure described on Step 1 . Consider only sequences with the top level different from $c^{(n-i-1)}$. We construct
$N_n$ as a block-NTQ group with the top part being the NTQ group for $c^{(n)}$ above
level $p$ and the bottom part being the block-NTQ group corresponding to $f_i$.

{\bf Case 3.} The top levels of $c^{(n-2)}$ and $c^{(n-1)}$ are not
the same and the top levels of $c^{(n-1)}$ and $c^{(n)}$ are not the
same. Then on the terminal  level $p$ of $c^{(n)}$ the image of
$N_{n-2}^{n-1}$ is a proper quotient.
Construct a block-NTQ group as in the previous case.

In this way we continue the construction of the tree $T_{EA}(G)$.

\subsection{The $\forall\exists$ tree is finite}\label{5.6}

It is convenient to define as in \cite{Sela}, Definition 4.2,  the notion of complexity of a fundamental sequence ($Cmplx (Var_{fund})$) at follows:
$$Cmplx(Var_{\rm fund}) =(dim (Var_{\rm fund}) + factors (Var_{\rm fund}), (size (Q_1),\ldots ,size (Q_m)), ab(Var _{\rm fund}(Q));$$
where \begin{itemize} \item $dim (Var_{\rm fund})$ is the rank of the free group on the terminal level; \item $factors (Var_{\rm fund})$ is the number of freely indecomposable, non-cyclic terminal factors embedded into $\Gamma$, \item $(size (Q_1),\ldots, size (Q_m))$ is the regular size of the system (lexicographically ordered tuple $(size (Q_1),\ldots, size (Q_m))$  of sizes of  all MQH subgroups that appear in the corresponding (block-)NTQ group); \item $ab(Var _{\rm fund}(Q))$  is the abelian size of the corresponding NTQ system (defined after the definition of corrective extensions), by other words it is  the sum of ranks of the kernels of the mappings of abelian groups that appear as vertex groups along the fundamental sequence.\end{itemize}
The complexity is a tuple of numbers which we compare in the left lexicographic order. (Sela calls
 $dim (Var_{\rm fund}) + factors (Var_{\rm fund})$ the Kurosh rank of the resolution)

In this subsection we will prove the following result.

\begin{theorem} \label{AE7} The tree $T_{EA}(G)$ is finite.\end{theorem}
Proof.
% We begin with the characterization of the case when the
%fundamental sequence $c$ has only one level. 
 Let an NTQ system $Q(X_1,\ldots, X_n)=1$ have the form
$$Q_1(X_1,\ldots ,X_n)=1,$$
$$\ldots $$
$$Q_n(X_n)=1.$$
 Denote by $D_Q$ a canonical decomposition corresponding to the group
$\Gamma _{R(Q)}$. Non-QH, non-abelian subgroups in this decomposition are
$P_1,\ldots ,P_s$. Abelian and QH subgroups correspond to the system
$Q_1(X_1,\ldots ,X_n)=1,$ variables from $X_1$ are either quadratic, or correspond
to abelian vertex groups. Consider the system $Q(X_1,\ldots, X_n)=1$ together with the 
fundamental sequence $Var_{\rm fund}(Q)$ defining it. Let $Var_{\rm
fund}(U_1)$ be the subset of $Var_{\rm fund}(Q)$ satisfying some
additional equation $U_1=1$, and $G_1$ a group discriminated by this
subset. Consider the family  of those canonical fundamental
sequences for $G_1$ modulo the images $R_1,\ldots ,R_s$ of the
factors $P_1,\ldots ,P_s$ in the free decomposition $H_1*=P_1*\ldots *P_s$ of
 the subgroup $\langle X_2,\ldots ,X_m\rangle $, which have the same Kurosh rank modulo them as
 $Q_1=1$.  This means that these sequences are compatible with the splitting of quadratic equations according to item 6) in  the first restriction on fundamental sequences with $Var_{fund}(Q).$ 
Constructing this fundamental sequence we take into consideration only those homomorphisms of the quotient $\Gamma$-limit group $G_1$ that embed the images of the terminal non-cyclic freely indecomposable factors into $\Gamma$.
Denote such a fundamental sequence by $c$, and the corresponding NTQ
system $S=1 (mod \ H_1*)$, where $S=1$ has the form
$$S_1(X_{11},\ldots ,X_{1m})=1$$
$$\ldots $$
$$S_m(X_{1m})=1.$$

 For each $i$ there exists a canonical
homomorphism
$$\eta _i: \Gamma _{R(Q)}\rightarrow \Gamma _{R(S_i,\ldots ,S_m)}$$
such that $P_1,\ldots ,P_s$ are mapped into
rigid subgroups in the canonical decomposition of $\eta
_i(\Gamma _{R(Q)})$.

Each QH subgroup in the decomposition  of $\Gamma _{R(S_i,\ldots ,S_m)}$
as an NTQ group is a QH subgroup of $\eta _i(\Gamma _{R(Q)})$. By \cite{Sela4}, Lemma 2.7,
for each QH subgroup $\hat Q$ of $\eta
_i(\Gamma _{R(Q)})$ there exists a QH subgroup of $\Gamma _{R(Q)}$ that is
mapped onto a subgroup of finite index in $\hat Q$. The size of this QH
subgroup is, obviously, greater or equal to the size of $\hat Q$. Those
MQH subgroups of $\Gamma _{R(Q)}$ that are mapped into QH subgroups of the
same size by some $\eta _i$ are called {\em stable}.

\begin{lemma}\label{ch}
In the conditions above there are the following possibilities:

(i) The set of homomorphisms going through $c$ is generic for each
regular quadratic equation in $Q_1=1$ and $ab(c)=ab(Var_{\rm
fund}(Q))$ (in this case $c$ has only one level identical to $Q_1$);

(ii) It is possible to reconstruct system $S=1$ in such a way that $size(S) < 
size(Q_1)$;

(iii)  $size(S) = size(Q_1)$, $ab(c)< ab(Var_{\rm fund}(Q)).$

\end{lemma}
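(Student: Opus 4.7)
The plan is to analyze the top level of $c$ in relation to $Q_1=1$ by tracking QH subgroups and abelian rank. Since the Kurosh rank of $c$ modulo $H_1*$ equals that of $Q_1=1$ and the rigid subgroups $P_1,\ldots,P_s$ are elliptic in the top level of $c$, the quantity (dimension + number of noncyclic terminal factors embedded into $\Gamma$) at that level matches. Using the canonical epimorphism $\eta_1:\Gamma_{R(Q)}\to\Gamma_{R(S_1,\ldots,S_m)}$ together with \cite{Sela4}, Lemma 2.7 (as recalled just before the statement of the lemma), every QH subgroup appearing in the NTQ decomposition of $\Gamma_{R(S_1,\ldots,S_m)}$ is the finite-index image of some QH subgroup of $Q_1=1$. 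Consequently $size\ S_1\le size\ Q_1$ and the abelian rank at the top of $c$ is bounded above by that of $V_{\rm fund}(Q)$.

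The trichotomy should follow by comparing these invariants. If the family of homomorphisms factoring through $c$ is generic for every regular quadratic equation in $Q_1=1$, then every QH subgroup of $Q_1$ is stable in the sense recalled above, so sizes are preserved QH subgroup by QH subgroup, and in particular no QH subgroup is compressed into a proper subsurface. Combined with preservation of the Kurosh rank, this forces the graph-of-groups structure of the top level of $c$ to coincide with that of $Q_1=1$. If moreover $ab(c)=ab(V_{\rm fund}(Q))$, then $c$ cannot possess a nontrivial lower level, because any such level would introduce an additional abelian splitting of one of the $P_i$ and strictly decrease the abelian rank; this gives case (i). If sizes are preserved but $ab$ strictly decreases, we are in case (iii).

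Suppose, on the other hand, that the generic condition fails on some regular quadratic equation $Q_{1,j}=1$ of $Q_1=1$. Then the images of the corresponding surface group lie in a proper subsurface subgroup, and by the first restriction (Subsection \ref{spl}, items 1--6) we can perform a canonical automorphism that splits $Q_{1,j}=1$ into a disjoint union of quadratic subsystems of strictly smaller Euler characteristic whose surfaces still accommodate the given family of homomorphisms. Reassembling $S=1$ with these smaller pieces in place of $Q_{1,j}=1$ yields an NTQ system with $size\ S<size\ Q_1$, which is case (ii). The main obstacle is precisely this reconstruction step: one must check that the rewritten system is still a legitimate NTQ system describing (after a canonical automorphism) the same homomorphisms, and that it remains well aligned in the sense of Subsections \ref{spl}--\ref{rk3}; this is where items 3--6 of the first restriction and clauses (1)--(4) of the second restriction are essential, since they guarantee that the subsystems produced from the non-generic behaviour can be realized simultaneously and produce a consistent fundamental sequence to which the complexity function applies.
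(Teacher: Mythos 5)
Your proposal captures the right invariants---Kurosh rank, QH subgroup sizes bounded via Sela's Lemma 2.7, and $ab$---and the trichotomy it derives from them matches the statement, but it omits the reconstruction argument that is the actual content of cases (i) and (ii) in the paper's proof. The paper does not merely compare invariants: it shows that when a QH subgroup $\overline{Q}_{1i}$ of $Q_1=1$ is \emph{stable} (maps to a QH subgroup of the same size at some level $s$ of $S=1$), the system $S=1$ can be rebuilt so that this QH subgroup sits on a single level. The mechanism is that the projections of $\overline{Q}_{1i}$ through the intermediate levels above $s$ are monomorphisms, so on those levels its image can be absorbed into an adjacent non-QH vertex group (and then does not count toward the size), and on the terminal level it can be replaced by an isomorphic copy of $\overline{Q}_{1i}$. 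Without this step it is not clear why ``sizes preserved, $ab$ preserved'' forces $c$ to have one level \emph{identical to} $Q_1$. Your justification that ``any such [lower] level would introduce an additional abelian splitting of one of the $P_i$ and strictly decrease the abelian rank'' does not cover the case where a lower level arises from a QH splitting of a $P_i$, which would not change $ab$; it is precisely the lifting of stable QH subgroups that rules this out.

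For case (ii), the route you take is also off: you invoke the first restriction to ``perform a canonical automorphism that splits $Q_{1,j}=1$ into smaller Euler-characteristic quadratics,'' but the first restriction is a structural condition imposed when \emph{constructing} the fundamental sequences, not a tool deployed here. In the paper, case (ii) falls out directly from non-stability: since by Sela's Lemma 2.7 every QH subgroup of $\Gamma_{R(S_i,\ldots,S_m)}$ is the finite-index image of a QH subgroup of $\Gamma_{R(Q)}$ of size at least as large, if some QH subgroup of $Q_1$ is not stable then the corresponding size in $S$ is strictly smaller, and after the reconstruction described above for the stable ones, one obtains $size(S)<size(Q_1)$. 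So the heart of the lemma is the lifting/joining reconstruction, and that is the piece missing from your proposal.
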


\begin{proof}

 The fundamental sequence $c$ modulo the decomposition $H_1* $
has the same dimension as $Q_1=1$. The Kurosh rank of $Q_1=1$ is the
sum of the following  numbers: \begin{enumerate} \item [1)] the
dimension of a free factor $F_1=F(t_0,\ldots ,t_{k_0})$ in the free
decomposition of $F_{R(Q)}$ corresponding to an empty equation in
$Q_1=1$; \item [2)] the number of abelian factors;
\item [3)] the sum of dimensions of surface group factors (that are not embedded into $\Gamma$, \item
[4)] the number of free variables of quadratic equations with
coefficients in $Q_1(X_1,\ldots ,X_n)=1$ corresponding to the fundamental sequence
$Var _{\rm fund}(Q)$, \item [5)] $factors (Var_{\rm fund})$.\end{enumerate} Because $c$ has the same
Kurosh rank, the free factor $F_1$ is unchanged. By 1) and 2)  in
Section \ref{rk3}, abelian and surface factors are sent into
different free factors.

Let $Q_{1i}=1$ be one of the standard quadratic equations in the
system $Q_1=1.$ If the set of solutions of $Q_{1i}=1$ over $F_{R(
Q_2,\ldots Q_n)}$ that factor through the system $S=1$ is a generic family for
$Q_{1i}=1$, then by the analog of (\cite{Imp},  Theorem 9)  we conclude that $S=1$
can be reconstructed so that it contains only one quadratic equation
as a part of the system $S_m=1$. Indeed, suppose a QH subgroup
$\overline Q_{1i}$  corresponding to $Q_{1i}=1$ mapped on some level
$s$ of $S=1$ onto a subgroup of the same size. Then it is stable.
Suppose also that a QH subgroup of $F_{R(Q)}$ that is a subgroup of
$\overline Q_{1i}$ is projected on some level $k$ above $s$ into a
QH subgroup $\overline Q_k$. Then this projection is a monomorphism.
On all the levels above $s$ we can adjoin the image of a subgroup of
$\overline Q_{1i}$ to a non-QH subgroup adjacent to it (and not
count it in the size). We can adjoin the image of $\overline Q_{1i}$
to a non-QH subgroup on all the levels above $m$, and replace the
image of it on the level $m$ by the isomorphic copy of $\overline
Q_{1i}$.

If all QH subgroups corresponding to  $Q_1=1$ are stable, then the
regular size of $S=1$ is the same as the regular size of $Q_1=1$ and
if $ab(c)=ab(Var_{\rm fund}(Q))$, then reconstructed $S=1$ has only
one level.

The lemma  is proved.
\end{proof}

To finish the proof of  Theorem \ref{AE7}, notice that by Lemma \ref{ch},
every time we apply the transformation of Case 3 (we refer to the
cases from Section \ref{nth}) in the construction of $T_{EA}(G)$  we
either (i) decrease the dimension in the top block, therefore decrease the Kurosh rank, or (ii) replace
the NTQ system in the top block  by another NTQ system of the same
dimension but of a smaller size, or (iii) decrease $ab(c)$. Hence the complexity defined in the beginning of this section decreases. Hence,
Case 3 cannot be applied infinitely many times to the top block. If
we apply Case 2, we consider the second block for  proper quotients
of a finite number of groups. Hence, starting from some step, we
come to a situation, when the fundamental sequences factor through
the same block-NTQ system, and the image $G_t$ of $G$ in the last level of
these systems  is a proper quotient of $G$.  
Case 1 cannot appear infinitely many times because every time the induced fundamental sequence has the the same decomposition on levels above some level $k$ and has a decrease in the complexity
of the decomposition on level $k$. Theorem \ref{AE7} is proved. $\Box$

The effectiveness of the  construction of the finite $\forall\exists$-tree for  sentence
$\Phi$ implies that the $\forall\exists$-theory of the group $\Gamma$ is decidable.
This proves Theorem \ref{aetheory}.

\newpage
\section{Effectiveness of the global bound in finiteness results}
 In this section we will give a proof of the effectiveness of the global bound in Theorem 11 \cite{KMel} and show how to generalize the proof for the hyperbolic group case.
In Section 5.4 of \cite{KMel} we defined the notion of a sufficient splitting of a $\Gamma$-limit group $K$ modulo a class of subgroups ${\mathcal K}$.
Let $\Gamma$ be a non-elementary torsion-free hyperbolic group with generators $A$, $P=A\cup \{p_1,\ldots ,p_k\}$, $H=\langle P\rangle .$ Let ${\mathcal K}$ consist of one subgroup ${\mathcal K}=\{H\}.$ Suppose that $K$  does not have a sufficient splitting modulo $H$.  Consider an one-level NTQ system corresponding to the abelian JSJ decomposition of $K$ modulo $H$ (if such a decomposition exists). Denote by $D$ the decomposition of the corresponding NTQ group $N$ modulo $H$. We need to add letters  (extending centralizers) to the generating set of $K$, to obtain a generating set of $N$. Let $N$ be given as the coordinate group of  a finite system of equations $S (X,P)=1.$

 Let $K_1$  be a fully residually $\Gamma$ quotient of the group $K$, $\kappa: K \rightarrow
K_1$ the canonical $\Gamma$- epimorphism that embeds terminal subgroups of $\Gamma$ participating in the construction of $K$, and $H_1 = H^\kappa$ the canonical
image of $H$ in $K_1$.
An elementary abelian splitting of $K_1$ modulo $H_1$ which does not
lift into  $K$ is called a {\em new} splitting.
\begin{definition}
\label{de:reducing} (Definition 20 \cite{KMel}) In the notation above the quotient $K_1$ is
called {\em reducing} if one of the following holds:
\begin{enumerate}
\item $K_1$ has  a non-trivial free decomposition modulo $H_1$;
\item $K_1$ has a new elementary abelian splitting modulo $H_1$.

\end{enumerate}
\end{definition}

 We
say that a homomorphism $\phi :K\rightarrow K_1$ is {\em special}
if $\phi $ either maps an edge group of $D$ to the identity  or
maps
 a non-abelian vertex group of $D$ to an abelian subgroup.
  All  homomorphisms that we consider are $\Gamma$-homomorphisms, therefore they embed the conjugates of subgroups of $\Gamma$ into conjugates of $\Gamma$. 

We will now define $\sim_{MAX}$-equivalent homomorphisms (that were introduced  in \cite{KMel}, Section 5.3). Let $S$ be an elementary abelian splitting of a fully residually $\Gamma$ group $G$  relative to a family of subgroups ${\mathcal
K}$, i.e., $G=A*_{C}B$  or $G=A*_{C}=\langle A,t|c^{t}=c', c\in
C\rangle$ modulo $\mathcal K$. Suppose, for certainty, that $\Gamma \leq
A$.
 Let $\psi:G\rightarrow \Gamma$  be an $\Gamma$-homomorphism from $G$
into $\Gamma$ and  $C^{\psi} \leq \langle c_0\rangle$, where $\langle
c_0\rangle$  is a maximal abelian subgroup of  $\Gamma$. For an arbitrary
$d \in \langle c_0\rangle$ we define a homomorphism $\psi_{d}
:G\rightarrow $ as follows. If $G=A*_{C}B$ then
 $$\psi_{d}(a)=\psi (a) \ for \ a \in A, \ \ \ \psi_{d}(b)=\psi (b)^{d} \ for \  b\in B.$$
   If
$G =\langle A,t|c^{t}=c', c\in C\rangle$ then
 $$\psi_{d} (a)=\psi (a) \  for  \ a\in A, \ \ \ \psi_{d}(t)=d \psi(t).$$
By $\sim_S$ we denote the following binary relation on $Hom_\Gamma(G,\Gamma)$
(in the notation above)
 $$ \sim_S = \{(\psi,\psi_d) \mid \psi \in Hom_\Gamma(G,\Gamma), d \in \langle
 c_0\rangle \}.$$

Now let $D$ be an   abelian JSJ decomposition of $G$ modulo
${\mathcal K}$. Suppose  $M$ is an abelian vertex group in $D$. Then
$M$ is a direct product $M=M_1\times M_2,$ where $M_1$ is the
minimal direct summand of $M$ containing all the edge groups of $M$
in $D$ (so the subgroup generated by the edge groups of $M$ has a
finite index in $M_1$). Denote by $G'$ the subgroup of $G$ which is
the fundamental group of the splitting $D'$ obtained from $D$ by
removing the direct summand $M_2$ from the vertex $M$. Clearly, $G$
splits as an extension of centralizer $C_{G'}(M_1)$ of the group
$G'$ by $M_2$. We fix a basis $g_1, \ldots, g_s$ of the free abelian
group $M_2$ (if $M_2 \neq 1$).

Now let $\theta : G \rightarrow \Gamma$ be an $\Gamma$-homomorphism and
$M^{\theta} \leq \langle c_0\rangle$, where $\langle c_0\rangle$ is
a maximal abelian subgroup of  $\Gamma$. Then for every tuple $d = (d_1,
\ldots, d_s) \in \langle c_0\rangle^s$ the map
  $$\theta_d : g_i \rightarrow d_ig_i^\theta, \ \ i = 1, \ldots,
  s$$
 extends to a homomorphism $\theta_d : M_2 \rightarrow F$.
 Now the restriction of the homomorphism $\theta$ on $G'$ and the
 homomorphism $\theta_d : M_2 \rightarrow \Gamma$ give rise to a
 homomorphism $G \rightarrow \Gamma$ which we define by the same symbol
 $\theta_d$. We refer  to the homomorphism $\psi_{d}$ and $\theta_d$
as obtained from $\psi$ and $\theta$ by {\em extended
automorphisms} or {\em fractional Dehn twists}.

By $\sim_M$ we denote the following binary relation on $Hom_\Gamma(G,\Gamma)$
(in the notation above)
 $$ \sim_M = \{(\theta,\theta_d) \mid \theta \in Hom_\Gamma(G,\Gamma), d \in \langle
 c_0\rangle^s \}.$$

We extend the relation $\sim$ of being equivalent with respect to the group of canonical automorphisms to the equivalence relation $\sim
_{AE}$ generated by $\sim$, all the binary relations $\sim_M$ where
$M$ runs over all abelian vertex groups in $D$, and all the binary
relations $\sim_S$ where $S$ runs over all elementary splittings of
$G$ corresponding to the edges of $D$.

We say that two $\Gamma$-homomorphisms $\phi, \psi \in Hom_\Gamma(G,\Gamma)$
are ${MAX}$-equivalent (and write $\phi \sim_{MAX} \psi$) if
 there exists $\theta \in Hom_\Gamma(G,\Gamma)$ such that
  $\phi \sim _{AE} \theta$ and $\theta$  coincides up to conjugation with $\psi$
   on the fundamental group of
  every connected component of the graph of groups obtained from
  $D$ by   removing from $D$ all QH-subgroups. 

Let ${\mathcal R}=\{K/R(r_1), \ldots, K/R(r_s)\}$ be a complete
reducing system for $K$ (each homomorphism from $K$ into $\Gamma$ that factors through a reducing quotient is $\sim_{MAX}$-equivalent  to a homomorphism that factors through one of them).  The existence of such system  for a free group is proved in \cite{KMel}, this can be similarly proved for a torsion free hyperbolic group.  Suppose we know NTQ groups for the system of reducing quotients of $K$ modulo $H$.  A homomorphism from $K$ onto $\Gamma$ is called   {\em
reducing}
  if there exists a solution   the $\sim_{MAX}$-equivalence class
  of $\psi$ which  factors through  one of the NTQ systems for equations $r_1= 1, \ldots, r_k = 1.$
Now we define algebraic  solutions of $S = 1$ in $\Gamma$.
  Let $\phi :H \rightarrow \Gamma$ be a
fixed $\Gamma$-homomorphism and $Sol_\phi$ the set of all homomorphisms
from $K$ onto $\Gamma$ which extend $\phi$.
A non-reducing non-special solution in $Sol_\phi$ is called
{\em $K$-algebraic} (modulo $H$ and $\phi$).

\begin{theorem} (cf. \cite{KMel1}, Theorem 6) \label{th5} Let $H\leq K$ be as above. The fact that for parameters $P$ there are exactly $n$ non-equivalent Max-classes of $K$-algebraic solutions
of the equation $S(X,P)=1$ modulo $H$ can be written algorithmically as a boolean combination of conjunctive $\exists\forall$-formulas, namely formulas of type \begin{equation}\label{AE} \forall X\exists Y (U(X,P)=1\implies V(X,P,Y)=1).\end{equation}

\end{theorem}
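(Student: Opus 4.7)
The plan is to write $\Phi_N(P)$, the sentence asserting "for parameters $P$ there are exactly $N$ Max-classes of $K$-algebraic solutions of $S(X,P)=1$ modulo $H$," as the conjunction of two formulas saying "at least $N$" and "at most $N$," and to observe that each conjunct (or its negation) has the shape of formula (\ref{ae}). Since the claim permits arbitrary Boolean combinations, such a decomposition suffices.

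The building blocks come from an explicit description of the JSJ decomposition $D$ of $K$ modulo $H$, computable by Proposition \ref{prop6}, together with the reducing system $\mathcal{R}=\{r_1,\dots,r_s\}$ (produced effectively as in \cite{KMel}, the construction carrying over to torsion-free hyperbolic $\Gamma$ via Propositions \ref{prop7} and \ref{relh}). From $D$ one reads off the edge-group and vertex-group generators, so that \emph{non-special} for $\bar X$ becomes the quantifier-free conjunction "for every edge-group generator $e$, $e(\bar X,P)\neq 1$; and for every non-abelian vertex group $V$ with generators $v_1,\dots,v_m$, there exist $i,j$ with $[v_i(\bar X,P),v_j(\bar X,P)]\neq 1$." From $D$ one also reads off a finite generating family for the canonical modular automorphism group of $K$ modulo $H$; each generator (a Dehn twist along an edge group, a Dehn twist along a simple closed curve on a QH surface, or an automorphism of an abelian vertex group fixing its edge subgroups) is parametrized by an integer power of an element of an explicit centralizer in $\Gamma$. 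Thus a canonical automorphism takes the form $\sigma_{\bar t}$, where $\bar t$ is a tuple of centralizer elements of $\Gamma$, and for each generator $x$ of $K$ the element $\sigma_{\bar t}(x)$ is an explicit word in $x$, $P$, and $\bar t$.

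With these in hand, $\bar X$ is $K$-algebraic exactly when $S(\bar X,P)=1$, $\bar X$ is non-special, and
\[
\forall \bar t \; \bigwedge_{i=1}^{s} r_i(\sigma_{\bar t}(\bar X),P)\neq 1,
\]
which is a purely universal formula. The relation $\bar X_1\sim_{MAX}\bar X_2$ is the existential $\exists \bar t:\sigma_{\bar t}(\bar X_1)=\bar X_2$, and its negation is universal. Consequently "at least $N$" is $\exists\bar X_1,\dots,\bar X_N$ followed by the $\forall$-conjunction asserting algebraicity of each $\bar X_j$ together with pairwise non-equivalence, giving an $\exists\forall$-formula; "at most $N$" is $\forall\bar X_1,\dots,\bar X_{N+1}$ followed by an $\exists$-tail asserting that if all are algebraic then some pair is equivalent, giving a $\forall\exists$-formula (i.e.\ the negation of a formula of type (\ref{ae})). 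Their conjunction is the required Boolean combination.

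The main obstacle is ensuring that the parametrization of the canonical automorphism group is uniformly computable in $P$ and that the parameters $\bar t$ in the formula really do range exactly over the orbit-producing modular transformations. For the first point, Lemma \ref{Lem:AlgorithmsRelativelyHyperbolic}(2) provides algorithmic access to the cyclic centralizers in $\Gamma$ hosting the Dehn-twist parameters, Proposition \ref{prop6} supplies the JSJ data from which the QH-surface Dehn twists and the abelian-vertex automorphisms are read off, and the effective control over $\mathcal{R}$ carries the construction through. For the second, one observes that by the standard JSJ theory of \cite{KMel} the canonical automorphism group is generated precisely by these effectively listed transformations, so the existential quantifier over $\bar t$ faithfully captures $\sim_{MAX}$. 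Assembling the pieces produces the algorithm asserted by the theorem.
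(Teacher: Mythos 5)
Your proposal follows essentially the same route as the paper: parametrize the canonical (fractional Dehn-twist) automorphisms by tuples $T$ constrained to commute with the edge-group generators $c_e(X_2)$, encode non-reducing as a universal condition over all such $T$ and Max-equivalence as an existential one, and then write ``at least $N$'' as an $\exists\forall$-formula $\theta_N(P)$, so that ``exactly $N$'' is $\theta_N(P)\wedge\neg\theta_{N+1}(P)$. The only small divergence is cosmetic: you spell out non-specialness as a separate quantifier-free conjunct, while the paper folds the whole algebraicity condition into the inequality $V(Y,Z,P)\neq 1$ against the reducing system; the underlying formula shape and the use of centralizer/commutation conditions to range over the modular group are identical.
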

\begin{proof} The generating set
$X\cup P$ of $N$ corresponding to the decomposition $D$ can be partitioned
 as $X=X_1\cup X_2\cup P$ so that $G=\langle X_2\cup P\rangle $ is the  fundamental group of the graph of groups obtained from $D$ by removing all QH-subgroups.
If $c_e$ is a given generator of an edge group of $D$, then we know how a generalized fractional Dehn twist (AE-transformation or extended automorphism in the terminology of \cite{KMel}, \cite{Imp}) $\sigma$ associated with edge $e$ acts on the generators from the set $X$. Namely, if $x\in X$ is a generator of a vertex group, then either $x^{\sigma}=x$, or $x^{\sigma}=c^{-m}xc^{m},$ where $c$ is a root of the image of $c_e$ in $\Gamma$, or, in case when $e$ is an edge between abelian and rigid vertex groups and $x$ belongs to the abelian vertex group, $x^{\sigma}=xc^{m}$. Similarly, if $x$ is a stable letter then either $x^{\sigma}=x$, or $x^{\sigma}=xc^{m}.$

 One can write elements $c_e$ as words in generators $X_2$, $c_e=c_e(X_2)$, because all edge groups belong to $G$. Denote $T=\{t_i,\ i=1,\ldots ,m\}.$ Consider the  formula
\begin{multline*}
\exists X_1 \exists X_2\forall Y \forall T\forall Z \left (
S(X_1\cup X_2,P)=1\right.\\  \wedge \neg \left(\left.\bigwedge_{i=
1}^{m}[t_i,c_i(X_2)]=1 \wedge Z=X_2^{\sigma_T}\wedge
S(Y\cup X_2,P)=1\wedge V(Y\cup Z,P)=1\right)\right ).
\end{multline*}
It says that there exists a solution of the equation $S(X_1,X_2,P)=1$ that is not Max-equivalent to a solution $Y,Z,P$ that
satisfies $V(Y,Z,P)=1$.  If now $V(Y,Z,P)=1$ is a disjunction of equations defining maximal reducing quotients (in the case we know them)  or NTQ systems for maximal reducing quotients (see Proposition \ref{cred} below), then this formula states that for
parameters $P$ there exists at least one Max-class of algebraic solutions of $S(X,P)=1$ with respect to $H$.

Denote $$\tau(T,X_2,Y,Z)=\left(\bigwedge_{i=
1}^{m}[t_i,c_i(X_2)]=1 \wedge Z=X_2^{\sigma_T}\wedge
S(Y\cup X_2,P)=1\wedge V(Y\cup Z,P)=1\right).$$The following formula states  that for
parameters $P$ there exist at least two non-equivalent Max-classes of algebraic solutions of $S(X,P)=1$ with respect to $H$.
\begin{multline*}
\theta _2(P)=\exists X_1, X_3 \exists X_2, X_4\forall Y, Y' \forall T, T',T''\forall Z,Z' \left(
S(X_1,X_2,P)=1\wedge S(X_3,X_4,P)=1\right.\\  \wedge \neg \left(\left.\tau(T,X_2,Y,Z)\vee \tau(T',X_4,Y',Z')\vee (\bigwedge_{i
=1}^{m}[{t_i}'',c_i(X_2)]=1\wedge X_2^{\sigma_{T''}}=X_4) \right)\right).
\end{multline*}

 Similarly one can write a formula $\theta _n(P)$ that states for
parameters $P$ there exist at least N non-equivalent Max-classes of algebraic solutions of $S(X,P)=1$ with respect to $H$.

Then $\theta _n(P)\wedge \neg\theta _{n+1}(P)$ states that there are exactly $n$ non-equivalent Max-classes. The theorem is proved.
\end{proof}

\begin{theorem}\label{term} (for the case when $\Gamma$ is a free group this is \cite{KMel}, Theorem 11) Let $H, K$ be finitely generated  fully residually $\Gamma$
groups such that $ \Gamma\leq H\leq K$ and $K$ does not have a
sufficient splitting modulo $H$. Let  $D$ be an abelian JSJ
decomposition of $K$ modulo $H$ (which may be trivial).  There
exists a constant $n=n(K,H)$
 such that for each $\Gamma$-homomorphism
 $\phi:H \rightarrow \Gamma$ there are at most $n$
algebraic  pair-wise non-equivalent with respect to  $\sim _{MAX}$, homomorphisms from
$K$ to $\Gamma$ that extend $\phi$.

 Moreover, if $H,K$ are as in Theorem \ref{th5}, the constant $n$ for the number of $\sim
 _{MAX}$-non-equivalent homomorphisms
 can be found effectively.
\end{theorem}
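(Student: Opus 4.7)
The plan is as follows. The existence portion of the theorem is an adaptation of the free-group case (\cite{KMel}, Theorem 11) to the torsion-free hyperbolic setting. The inductive argument there uses only (a) the complexity analysis of well aligned fundamental sequences and (b) the fact that any $\Gamma$-limit group arising from a discriminating family of algebraic homomorphisms admits a reducing splitting or embeds rigidly; both ingredients are available in the hyperbolic setting from the constructions of Section~3 and from Sela's analog for torsion-free hyperbolic groups cited just before Subsection~2.4. So the non-effective existence of $N=N(K,H)$ is in hand.

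For effectiveness I would use the preceding theorem together with the decidability of the $\forall\exists$-theory of $\Gamma$ (Theorem~\ref{aetheory}). Recall that we produced, for each positive integer $M$, an effectively constructible sentence $\theta_M(P)$ of $\exists\forall$-form which holds precisely when the parameter tuple $P$ admits at least $M$ pairwise $\sim_{MAX}$-non-equivalent algebraic extensions to $K$. Consequently
\[
\Psi_M \;:=\; \forall P\,\neg\theta_{M+1}(P)
\]
is a (boolean combination of) $\forall\exists$-sentence(s), and $\Psi_M$ is true in $\Gamma$ if and only if $M$ is a valid global bound in the sense of the theorem. By Theorem~\ref{aetheory} each $\Psi_M$ is decidable.

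The algorithm then enumerates $M=1,2,3,\ldots$, tests the truth of $\Psi_M$ in $\Gamma$, and outputs the first $M$ for which $\Psi_M$ holds. Termination is guaranteed by the non-effective existence of $N$: any $M\geq N$ makes $\Psi_M$ true, so the search halts in at most $N$ steps with a value $N(K,H)\leq N$.

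The main obstacle lies in justifying the non-effective existence step in the hyperbolic context with enough care to ensure that the notion of ``algebraic solution" used in $\theta_M(P)$ agrees with the notion used in the existence argument; in particular, one needs that the complete reducing system $\mathcal{R}=\{K/R(r_1),\ldots,K/R(r_s)\}$ exists and consists of proper quotients of $K$ modulo $H$. This is the hyperbolic analog of \cite{KMel}, Section~5.4, and follows by combining the JSJ machinery of Proposition~\ref{prop6} with the splitting results for strict $\Gamma$-limit groups (Sela's proposition quoted after Proposition~\ref{recognrigid}) and the induced fundamental sequence construction of Subsection~\ref{inher}; once one has a complete reducing system, algebraic solutions are well defined and the previous theorem applies verbatim.
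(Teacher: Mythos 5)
Your proposal is essentially the same as the paper's proof: the paper also takes the non-effective existence of $N$ from Sela, and for effectiveness builds the sentence $\theta_m(P)$ of the preceding theorem, quantifies over $P$, and locates the threshold $m$ by appealing to decidability of the $\exists\forall$- (equivalently $\forall\exists$-) theory established in Theorem~\ref{aetheory}. The only cosmetic difference is that the paper first handles the rigid case (no splitting modulo $H$) with a purely existential sentence and decidability of the existential theory, then treats the solid case via the $\exists\forall$-sentence, whereas you fold both into the uniform test $\Psi_M=\forall P\,\neg\theta_{M+1}(P)$; these yield the same algorithm and the same bound.
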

\begin{proof}  The statement about the existence of such a constant $n$ is Theorem 3.5 \cite{Sela} (although there is no proof of Theorem 3.5 there).
We will show how to find this constant effectively.
To make presentation easier, we consider first the case when the group $K$ from the formulation of the theorem does not have a splitting modulo $H$.
(in the terminology of \cite{Sela} it is a {\em rigid} limit group). We consider the formula
$$\exists P\exists Y_1,\ldots ,Y_m (\wedge _{i=1}^m S(P,Y_i)=1\wedge Y_i\neq Y_j (i\neq j)\wedge _{t=1}^k\wedge _{i=1}^m r_t(P,Y_i)\neq 1).$$
We know from  \cite{Sela}, Theorem 3.5, that the number $m$ of possible algebraic solutions is bounded. Therefore for some  positive integer $m$ such a formula will be false. The minimal such $m$ can be found because the existential theory of $\Gamma$ is decidable. Therefore $n=m-1$.

Now we consider the case when the group $K$ has a splitting modulo $H$ but not a sufficient splitting ($K$ is {\em solid} in terminology of \cite{Sela}). This case is more complicated because we have to write  that  solutions corresponding to tuples  $Y_i, Y_j, i\neq j,$  are not reducing and belong to different $\sim _{MAX}$-equivalence classes. This means that there exist no elements representing QH subgroups and no elements commuting with edge groups of the JSJ decomposition of $K$ modulo $H$ such that application of generalized fractional Dehn twists corresponding to these elements  take some of these solutions to reducing solutions or take one solution to the other.   This fact can be  expressed in terms of an $\exists\forall$-sentence that is true if and only if there exists a homomorphism $H\rightarrow F$ that can be extended to $m$ algebraic and not $\sim _{MAX}$ equivalent homomorphisms $K\rightarrow F.$ The decidability of the $\exists\forall$-theory of $\Gamma$ was proved in the previous section. Then the bound on $m$ can be found effectively because we can  find out for which $m$ the sentence is false and therefore such a homomorphism $H\rightarrow F$ does not exist.
\end{proof}

\newpage
\section{Quantifier elimination algorithm}\label{sec:6}
In this section we will prove Theorem \ref{main}. Consider the following formula

\begin{equation}\label{38neg} \Theta(P)=\exists Z \forall X\exists Y (U(A,P,Z,X,Y)=1\wedge V(A,P,Z,X,Y)\not =1),\end{equation}
where $A$ is a generating set of $\Gamma$.
This formula $\Theta(P)$ is the negation of the formula $\Phi$ considered in \cite{KMel}.

The existence of quantifier elimination to boolean combinations of $\forall\exists$-formulas for $\Gamma$ was proved in \cite{Sela}. Earlier it was proved in \cite{Sela5},\cite{KMel} that every formula in the theory of a free group $F$ is equivalent to a boolean combination of $\forall\exists$-formulas. The general schemes of the proofs in \cite{Sela5} and in \cite{KMel} is quite similar: to use the  implicit function theorem, which is Theorem \ref{IFT} for a torsion free hyperbolic group (= existence of formal solutions
in the covering closure of a limit group) and to approximate any definable set and get its stratification  using certain verification process (based on the implicit function theorem) that stops after a finite number of steps. But all the necessary technical results are proved differently (using actions on $\mathbb R$-trees in \cite{Sela5}, and using elimination process and free actions of fully residually free groups on ${\mathbb Z}^n$-trees, which is equivalent to the existence of free length functions in ${\mathbb Z}^n$, in \cite{KMel}). The proof in \cite{KMel} is also algorithmic. It will be more convenient for us to follow our proof in \cite{KMel} and to use our terminology but refer to \cite{Sela} for necessary technical results.

To obtain effective quantifier elimination to boolean combinations of $\forall\exists$-formulas it is enough to give an algorithm to find such a boolean combination that defines the same set  as  $\Theta(P).$

The procedure for a torsion free hyperbolic group $\Gamma$ is similar to the one for a free group. We recall how the procedure goes. For every tuple of elements $\bar P$ for which $\Theta(\bar P)$ is true, there exists some $\bar Z$ and (by the Merzljakov theorem  (Theorem  2.1, \cite{Sela})) a solution $Y=f(A,\bar P,\bar Z,X)$ of $U=1\wedge V\neq 1$ in $F(X)\ast \Gamma.$ All formula solutions of $U=1$ for all possible values of $P$ belong to a finite number of fundamental sequences with terminal groups $\Gamma _{R(U_{1,i})}\ast F(X),$ where $U_{1,i}=U_{1,i}(A,P,Z,Z^{(1)})$ and $\Gamma _{R(U_{1,i})}$ is a group with no sufficient splitting modulo $\langle A,P,Z\rangle$ (this is done entirely similar to the case of a free group which described in Section 12.2, \cite{KMel}). These groups can be  found by Proposition \ref{12} as quasi-convex closures of terminal groups of constructed fundamental sequences. %Each group is described as a  maximal $\Gamma$-limit quotient of a group given by a finite system of equations over $\Gamma$. It is a quotient of a limit group over $F$ under the canonical homomorphism $F\rightarrow\Gamma$.
 
We now consider each of these fundamental sequences separately.  Below we will not write the constants $A$ in the equations but assume that equations may contain constants. Those values $P,Z$ for which there exists a value of $X$ such that the equation 
$$V(P,Z,X,f(Z, Z^{(1)},P, X))=1$$ is satisfied for any function $f$ give a system of equations on $\Gamma _{R(U_{1,i})}\ast F(X).$ This system is equivalent to a finite subsystem (to one equation in the case when we consider formulas with constants).  Let $G$ be the coordinate group of this system and  $G_i, i\in J$ be the corresponding fully residually $\Gamma$ groups.

We introduced in Section 12.2, \cite{KMel}, the tree $T_{X}(G)$ which is constructed  (in the case of a free group $F$) the same way as $T_{EA}(G)$ with $X, Y$ considered as variables and $P,Z, Z^{(1)}$ as parameters.  Entirely similar such a tree can be constructed in the case of $\Gamma$ as follows. To each group $G_i$  we assign fundamental sequences modulo $\langle P,Z, Z^{(1)}\rangle$. Their terminal groups are groups  $\Gamma _{R(V_{2,i})}$, where
  $$V_{2,i}=V_{2,i}(P,Z,Z^{(1)}, Z_1^{(2)})$$
  that do not have a sufficient splitting modulo $\langle P,Z, Z^{(1)}\rangle$. Then we find all formula solutions $Y$ of the equation $$U(P,Z,X,Y)=1$$ in the corrective normalizing extensions of the NTQ groups corresponding to these fundamental sequences for $X$ (see  \cite{Imp}, Theorem 12). These formula solutions $Y$ are described by a finite number of fundamental sequences with terminal groups $F_{R(U_{2,i})},$ where $U_{2,i}=U_{2,i}(P,Z,Z^{(1)},Z_{1}^{(2)}, Z^{(2)}).$ Then again we investigate the values of $X$ that make the word $V(P,Z,X,Y)$ equal to the identity for all these formula solutions $Y$. And we continue the construction of $T_{X}(G).$ We can prove that this tree is finite exactly the same way as we proved the finiteness of the $\forall\exists$-tree.  We will call $T_{X}(G)$ {\em the  parametric $\forall\exists$-tree} for the formula $\Theta(P)$. For each branch of the  tree $T_{X}$ we assign a
 sequence of toral relatively hyperbolic $\Gamma$-limit groups $$\Gamma _{R(U_{1,i})},
\Gamma _{R(V_{2,i})}\ldots ,\Gamma _{R(V_{r,i})}, \Gamma _{R(U_{r,i})}$$
as in \cite{KMel}, Section 12.2. Corresponding irreducible systems of equations are:

$$U_{1,i}=U_{1,i}(P,Z,Z^{(1)}),$$
$$U_{m,i}=U_{m,i}(P,Z,Z^{(1)},
Z_1^{(m)}, Z^{(m)}),\ m=2,\ldots ,r,$$ which correspond to the terminal
groups of fundamental sequences describing $Y$  of level
$(m,m-1)$, and
 $$V_{m,i}=V_{m,i}(P,Z,Z^{(1)}, Z_1^{(m)}),\ m=2,\ldots ,r$$
which correspond to the terminal groups of fundamental sequences describing $X$
 of level $(m,m)$. They correspond to vertices of $T_{X}$
that have distance $m$ to the root.

For each $m$ the group $\Gamma _{R(U_{m,i})}$ does not have a sufficient
splitting modulo the subgroup $\langle P,Z,Z^{(1)}, Z_1^{(m)}\rangle ,$ and the group
$\Gamma _{R(V_{m,i})}$ does not have a sufficient splitting modulo the
subgroup  $\langle P,Z,Z^{(1)}\rangle .$

On each step we consider terminal groups of
all levels. Below we will sometimes skip index $i$ and write $U_m,\
V_m$ instead of $U_{m,i},\ V_{m,i}.$

\begin{prop}  \label{cred}  Let $S(Z, A)=1$ be a finite system of equations over $\Gamma$, and $H\leq \Gamma _{R(S)}$. Let  $K$ be  a terminal group of a completed fundamental sequence modulo $H$, and
$K$  does not have a sufficient splitting modulo $H$. Then there is an algorithm to construct a complete system of corrective extensions of completed canonical fundamental sequences modulo $H$ for a complete system of reducing quotients $K_1,\ldots ,K_m$  of $K$.\end{prop}

\begin{proof}
As in the proof of Proposition \ref{12},  using canonical representatives we can construct a family of generalized equations $\Omega _1,\ldots , \Omega _k$ in the free group $F$   ($\pi :F\rightarrow \Gamma$) \cite{Imp} such that each solution of each $\Omega _i$ in $F$ (as a system of equations in the group) corresponds to  a solution of $S(Z,A)=1$ in $\Gamma$, and every solution of $S(Z,A)=1$ in $\Gamma$ corresponds to some solution of some $\Omega _i$ as a generalized equation (solution without cancellations).  We can run the Elimination process for each generalized equation modulo the pre-image of $H$ (generators of $H$ are included in the set of variables of $S(Z,A)=1$).  If a generalized equation corresponds to a generic family of solutions for a freely indecomposable  NTQ system or a freely indecomposable NTQ system modulo a subgroup, then all the splittings on all the levels of this NTQ system are detected in the Elimination process \cite{KMS} (we called it Makanin's process in \cite{Imp}) for the generalized equation, and  produce a corresponding NTQ system over a free group.  Moreover, the edge groups for these splittings are not trivialized in the re-working process described in Section \ref{section:HomDiagrams}. We will obtain in the Elimination process completed  fundamental sequences ending with groups ${K_i^*}$ without sufficient splitting modulo the group $H^*$ generated by $F$ and the variables corresponding to the generators of $H$. Then we will obtain completed fundamental sequences for reducing quotients of  groups  ${K_i^*}$.   Modifying the obtained by the Elimination process  NTQ groups for reducing quotients ${K_i^*}$ over $F$ into NTQ groups over $\Gamma$ for quotients of $K$, we will obtain NTQ groups for different quotients of $K$ but, in particular, we will obtain them for all the maximal reducing quotients of $K$ because the new splittings for quotients of $K$ will be seen in the Elimination process over a free group. Then we  have to compare the  reducing quotients that we obtain and take the Sol-maximal ones.  The procedure for finding Sol-maximal $\Gamma$-limit quotients in described in  \cite{KMT}, Section 6.1.
\end{proof}
\subsection{Algorithm for the construction of the tree $T_X(G)$.}
\begin{prop}\label{TX}

 There is an algorithm to construct the following: 
 \begin{enumerate}\item [1)] the finite parametric $\exists\forall$-tree $T_{X}(G) $, \item [2)] for each branch of the tree $T_{X}$  the finite family of toral relatively hyperbolic $\Gamma$-limit groups $$\Gamma _{R(U_{1,i})},
\Gamma _{R(V_{2,i})}\ldots ,\Gamma _{R(V_{r,i})}, \Gamma _{R(U_{r,i})}$$ described above. Each group is a quasi-convex closure of a group described as a  maximal $\Gamma$-limit quotient of a group given by a finite system of equations over $\Gamma$.   \item [3)]  for each vertex of the tree, a fundamental sequence describing either $Y$ (if the associated group is $\Gamma _{R(U_{j,i})}$) or $X$ (if the associated group is $\Gamma _{R(V_{j,i})}$).\end{enumerate}

\end{prop}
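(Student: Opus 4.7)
The plan is to mirror the inductive construction of $T_{EA}(\Phi)$ from Section \ref{sec:4}, but now treating $X,Y$ as variables and $P,Z,Z^{(1)}$ as parameters, and to verify that each constructive step in that inductive construction can be carried out by an algorithm using the algorithmic results already assembled in the excerpt. Concretely, at the root we are given the formula $\Theta(P)$ together with the system $U(A,P,Z,X,Y)=1\wedge V(A,P,Z,X,Y)\neq 1$. The (effective) Merzljakov-type theorem for $\Gamma$ together with Proposition \ref{prop8} produces a finite list of strict well aligned fundamental sequences whose terminal groups $\Gamma_{R(U_{1,i})}$ are groups with no sufficient splitting modulo $\langle A,P,Z\rangle$, and such that every formula solution $Y=f(A,P,Z,Z^{(1)},X)$ of $U=1\wedge V\neq 1$ over $F(X)\ast\Gamma$ factors through one of them. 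This gives the first nontrivial layer of vertices.

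At an inductive step, associated to a vertex labelled by a group $\Gamma_{R(U_{m,i})}$, one must determine those values of $X$ for which the equation $V(P,Z,X,f)=1$ is forced for every formula solution $f$; by Proposition \ref{relh} this set of $X$ is encoded by a finite system of equations on the appropriate coordinate group, and by Proposition \ref{prop8} we can effectively construct strict well aligned fundamental sequences modulo $\langle P,Z,Z^{(1)}\rangle$ whose terminal groups $\Gamma_{R(V_{m+1,i})}$ have no sufficient splitting modulo that subgroup. Dually, at a vertex labelled $\Gamma_{R(V_{m,i})}$ we apply the effective parametrization/implicit function theorem (the hyperbolic version of Theorem 12 of \cite{Imp} used already in Section \ref{aex}) in the corrective normalizing extension of the associated NTQ group to enumerate all formula solutions $Y$ of $U=1\wedge V\neq 1$; these again sit inside a finite collection of strict fundamental sequences with terminal groups of the form $\Gamma_{R(U_{m+1,i})}$. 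All JSJ decompositions, reducing quotients, and corrective extensions needed along the way can be produced by Propositions \ref{prop6}, \ref{recognrigid}, \ref{prop8} and \ref{cred}. This yields the data in items (2) and (3) of the statement at every vertex we create.

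Finiteness of the tree, item (1), is proved by the same complexity argument used for Theorem \ref{AE}: along any branch, whenever the structure in the ``top block'' is rebuilt (Cases 1--3 of Sections \ref{2nd} and \ref{nth} translated to the parametric situation), the complexity
\[
Cmplx(V_{\rm fund})=(\dim+\text{factors},\,(\mathrm{size}\,Q_1,\ldots,\mathrm{size}\,Q_m),\,ab)
\]
of the fundamental sequence strictly decreases in the lexicographic order, exactly as in the proof of Theorem \ref{AE}. Each step of the construction is algorithmic because it only requires: solving equations in $\Gamma$ (Lemma \ref{Lem:RipsSela1} and \cite{KMac}), constructing primary JSJ decompositions relative to finite subgroups (Proposition \ref{prop6}), recognizing rigid subgroups inside NTQ groups (Proposition \ref{recognrigid}), computing quasi-convex closures and presentations as amalgamated products (Lemma \ref{pres} and Proposition \ref{int}), deciding isomorphism of successive images via the decidable universal theory of $\Gamma$, and enumerating maximal reducing quotients (Proposition \ref{cred}). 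Termination of the construction at finite depth therefore yields a terminating algorithm that outputs $T_X$, the families of groups along each branch, and the fundamental sequence attached to each vertex.

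The main obstacle will be making the verification step -- deciding at each vertex whether the image of the group $G$ or of some previous block group $N_{j_k}^{j_k+1}$ is a proper quotient inside the newly constructed NTQ group -- uniformly algorithmic; as in Section \ref{nth} this reduces, via Lemma \ref{pres}, to writing two finite systems of equations with coefficients in $\Gamma$ and using the decidability of the universal theory of $\Gamma$ to compare their coordinate groups, which is exactly the tool supplied by Theorem \ref{aetheory} proved in Section \ref{sec:4}. With this comparison available at each step the construction of $T_X$ becomes fully effective, establishing all three items of the proposition.
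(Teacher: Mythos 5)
Your proposal is correct and follows essentially the same approach as the paper. The paper's own proof is a single sentence invoking Proposition \ref{cred} (to effectively construct complete systems of reducing quotients) and Proposition \ref{prop8} (to effectively construct fundamental sequences modulo finitely many finitely generated subgroups); you simply unfold the same plan in greater detail, tracing the inductive construction from Sections \ref{aex}--\ref{nth} in the parametric setting and pointing to the same toolkit (Propositions \ref{relh}, \ref{prop6}, \ref{recognrigid}, \ref{prop8}, \ref{cred}, Lemma \ref{pres}, Proposition \ref{int}, decidability of the universal theory of $\Gamma$) plus the complexity-decrease argument from Theorem \ref{AE} for finiteness. One minor slip: the comparison of successive quotients via Lemma \ref{pres} relies on decidability of the \emph{universal} theory of $\Gamma$ (known from \cite{Sela}, \cite{D1}, \cite{KMac}), not on Theorem \ref{aetheory} as you state; this does not affect the argument.
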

\begin{proof}

 The proof follows Section 12.2, \cite{KMel} (but works with $\Gamma$ instead of $F$) and  uses the algorithm from Proposition \ref{cred} to construct a complete system of NTQ groups (and fundamental sequences) for reducing quotients and Proposition \ref{12} that 
states that we can construct fundamental sequences modulo a finite set of finitely generated subgroups algorithmically.  Indeed,
a fundamental sequence describing  $Y$ on level $(m,m-1)$ terminates at the group  $\Gamma _{R(U_{m,i})}$   and a fundamental sequence describing $X$ on level $(m,m)$ terminates at the  group  $\Gamma _{R(V_{m,i})}$. \end{proof}

The tree $T_{X}(G)$ is finite, as in \cite{KMel} we  have schemes of levels $(1,0),(1,1),(2,1),(2,2)$ etc up to some number $(m,m)$.

\subsection{Configuration groups}

We will concentrate on level $(2,1)$ now. In Definition 27 and Definition 28, \cite{KMel} we define initial fundamental sequences of levels $(2,1)$ and $(2,2)$  and width $i$ (the possible width is bounded) modulo $P$.  Since we are now considering the formula $\Theta$ such that $\Theta=\neg\Phi$ for the formula $\Phi$ considered in \cite{KMel}, we will slightly change the definition here. It will be more convenient to replace condition (6) from Definition 28 of \cite{KMel} by its negation and add this negation on level $(2,1)$.

\begin{definition}  \label{d2} Let $\Gamma _{R(V_{2,1})},\ldots ,\Gamma _{R(V_{2,t})}$ be the whole family
of groups on level $(1,1)$ constructed for a fixed group $\Gamma _{R(U_{1,k})}$ ($k$ is fixed).  To construct the {\em initial
fundamental sequences of level} (2,1) and {\em width} $i=i_1+\ldots
+i_t$, we consider the fundamental sequences modulo the subgroup
$\langle P\rangle$ for the groups $H$ discriminated
by $i$ solutions of the systems
$$U_{2,m_s}(P,Z,Z^{(1)},Z_1^{(2,j,s)},Z^{(2,j,s)})=1,\ j=1,\ldots i_s,\ s=1,\ldots ,t,$$ with the properties:

(1) $Z^{(1)}$ are algebraic solutions of $U_{1,k} (P,Z,Z^{(1)})=1$, $Z_1^{(2,j,s)}$ are algebraic solutions of $V_{2,s} (P,Z,Z^{(1)}, Z_1^{(2)})=1$,   $Z^{(2,j,s)}$ are algebraic solutions of $U_{2,m_s} (P,Z,Z^{(1)}, Z_1^{(2)}, Z^{(2)})=1;$

(2) $ Z_{1}^{(2,j,s)}$ are not MAX-equivalent to $ Z_{1}^{(2,p,s)},
p\neq j,\ p,j=1,\ldots, i_s,\ s=1,\ldots ,t$;

(3) for any of the finite number of values of $Z_{1}^{(2)}$
the fundamental sequences for $V_{2,s}(P
,Z,Z^{(1)},Z_{1}^{(2)})=1$ are contained in the union
of the fundamental sequences for $U_{2, m_s}(P
,Z,Z^{(1)},Z_{1}^{(2,j)},Z^{(2,j)})=1$ for
different values of $Z^{(2,j,s)}$;

(4) there is no non-equivalent $Z_{1}^{(2,i_{s}+1,s)}$, algebraic,
solving $V_{2,s}(P,Z,Z^{(1)}, Z_1^{(2)})=1, s=1,\ldots ,t$.

(5) the solution $P,Z,Z^{(1)}$ does not
satisfy a proper equation which implies $V=1$ for any value of
$X$.

 (6)  for any $s$,  the solution $P
,Z,Z^{(1)},Z_{1}^{(2,1,s)},Z^{(2,1,s)}$  can not be
extended to a solution of some
$$V_{3,s}(P,Z,Z^{(1)},Z_{1}^{(2,1,s)},Z^{(2,1,s)},
Z_{1}^{(3,1,s)})=1.$$

We call this group $H$ a {\em configuration group}. We also call a tuple $$Z,Z^{(1)},Z_{1}^{(2,j,s)},Z^{(2,j,s)},\ j=1,\ldots i_s,\ s=1,\ldots ,t$$ satisfying the conditions above {\em a certificate} for $\Theta$ for $P$ (of level (2,1) and width $i$). 
 We add to the generators of the configuration group additional variables $Q$ for the primitive roots of a fixed set of elements for each certificate (these are primitive roots of the images in $Gamma$ of the edge groups and abelian vertex groups in the relative JSJ decompositions of the groups $\Gamma _{R(V_{2,1})}$). \end{definition}
 
 Each group $H$ from this definition is a fundamental group of some system of equations, say $$W(P,Z,Z^{(1)},Z_1^{(2,j,s)},Z^{(2,j,s)}, Q, \ j=1,\ldots i_s,\ s=1,\ldots ,t)=1.$$ For each initial
fundamental sequence of level (2,1) and width $i$, Identically to the proof of Lemma 27 \cite{KMel}, one can show that 
 for each value of parameters $P$ factoring through this fundamental sequence for which there exists a certificate, there are the following possibilities: 
 \begin{enumerate}\item  there exists 
 a generic family of certificates (corresponding to the fundamental sequence), 
 \item any certificate in this fundamental sequence can be extended by $Z_1^{(2,i_s+1,s)}$ so that the whole tuple factors through one of the groups $H_{surplus}$ discriminated by solutions of $W=1$ together with solutions $Z_1^{(2,i_s+1,s)}\rightarrow\Gamma$  minimal with respect to fractional Dehn twists,  and going through one of the fundamental sequences for which $Z_1^{(2,i_s+1,s)}$  is either reducing or Max-equivalent to one of $Z_1^{(2,j,s)}, j=1, \ldots ,i_s.$\end{enumerate}
In the former case we say that the fundamental sequence has depth 1, it the latter case
we will consider fundamental sequences of depth 2 (for level $(2,1)$ and width $i$).

 Notice that we do not know a system of equations defining  a configuration group. We, therefore, need the following result.

\begin{prop} Let $H=\Gamma _{R(W)}$ be one of the configuration groups with generators
$$P, Z, Z^{(1)},
Z_{1}^{(2,j,s)},Z^{(2,j,s)}, Q, \  j=1,\ldots ,i_s,\ s=1,\ldots
,t. $$    Then there is an algorithm to find  a quasi-convex closure of each terminal group of each fundamental sequence  for $H$ modulo $P$.
\end{prop}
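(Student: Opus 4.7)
The plan is to construct $H$ algorithmically as a quotient of an explicit finitely presented group $H_0$, then apply the algorithms of Section 3 to obtain fundamental sequences modulo $P$ and filter them by the defining conditions of certificates, invoking decidability of the $\forall\exists$-theory of $\Gamma$.

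As an explicit upper bound, take $H_0$ with the listed generators and relations $U_{2,m_s}(P,Z,Z^{(1)},Z_1^{(2,j,s)},Z^{(2,j,s)}) = 1$ for every pair $(j,s)$ together with the root relations $q^{n_e} = c_e$ for each $q \in Q$, where the elements $c_e$ and the integers $n_e$ are read off from the JSJ decompositions of $\Gamma_{R(V_{2,s})}$ furnished by Proposition \ref{prop6}. The configuration group $H$ is discriminated by certificates and hence is fully residually $\Gamma$; each certificate factors through the natural surjection $H_0 \twoheadrightarrow H$, so every fundamental sequence for $H$ modulo $P$ is inherited from a fundamental sequence for $H_0$ modulo $P$. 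Apply Proposition \ref{prop8} to $H_0$ with distinguished subgroup $\langle P \rangle$ to obtain a finite collection $\mathcal{F}$ of strict fundamental sequences modulo $P$ through which every $\Gamma$-homomorphism of $H_0$ fixing $P$ factors. Each sequence in $\mathcal{F}$ terminates at an explicitly presented $\Gamma$-limit group $L$.

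The decisive step is filtering $\mathcal{F}$ to retain only sequences whose terminal group $L$ admits a homomorphism that is a certificate. Each clause of Definition \ref{d2} translates into a first-order sentence over $\Gamma$ in the parameters of $L$: algebraicity (clause (1)) is non-satisfaction of each maximal reducing equation supplied by Proposition \ref{cred}; non-MAX-equivalence (clause (2)) is a universal sentence forbidding extended-automorphism tuples that identify $Z_1^{(2,j,s)}$ with $Z_1^{(2,p,s)}$ for $j \neq p$; clauses (3) and (4), which relate the fundamental sequences of $V_{2,s}$ to those of $U_{2,m_s}$, are expressible as $\forall\exists$-sentences referencing the terminal groups already listed in $T_X$ by Proposition \ref{TX}; clauses (5) and (6) are universal non-extendability statements over equations drawn from the higher levels of $T_X$. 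All reduce to $\forall\exists$-sentences over $\Gamma$, and are therefore decidable by Theorem \ref{aetheory}. The terminal groups of the surviving sequences are precisely the terminal groups of fundamental sequences for $H$ modulo $P$.

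The chief obstacle is the cross-level bookkeeping needed for clauses (3) and (6): one must express that the fundamental sequences of $V_{2,s}$ are covered by those of $U_{2,m_s}$, and that the certificate does not extend to a solution of $V_{3,s}$. Both become effective once $T_X$ is in hand, because Proposition \ref{TX} delivers explicit presentations for every group appearing at each level; the cross-level conditions then unfold into explicit equations and inequations on the parameters of $L$ combined with the already-constructed data from the higher levels.
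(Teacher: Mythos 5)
Your approach has a genuine gap: filtering the Makanin--Razborov tree of a finitely presented surrogate $H_0$ does not produce the terminal groups of the fundamental sequences for the configuration group $H$. Proposition~\ref{prop8} applied to $H_0$ gives fundamental sequences built from the abelian JSJ decompositions of $H_0$ and its successive quotients; since $H$ is in general a proper quotient of $H_0$ (the paper explicitly notes that no defining system of equations for $H$ is known), the groups appearing on the levels -- and in particular the terminal groups -- are those for $H_0$, not for $H$. Post-filtering by ``does $L$ admit a certificate?'' is a yes/no test on each branch; it tells you which branches carry certificate-type homomorphisms, but it does not \emph{replace} the terminal group $L$ of a surviving branch with the correct terminal group for $H$. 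The unjustified sentence in your argument is precisely ``every fundamental sequence for $H$ modulo $P$ is inherited from a fundamental sequence for $H_0$ modulo $P$'': the existence of a factoring surjection $H_0\twoheadrightarrow H$ does not transport fundamental sequences from $H_0$ to $H$, because the JSJ-based structure at each level can be completely different.

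The paper's proof handles this by a different mechanism. It does not start from any $H_0$; it goes through the generalized/cut equation machinery of~\cite{Imp}. One builds cut equations modulo the relevant parameter subgroups simultaneously for all the nested systems $U_{2,m_s}=1$, $V_{2,m}=1$, $U_{1,m}=1$, identifies the intervals carrying the same variables, and encodes the inequality conditions (1)--(6) of Definition~\ref{d2} directly as non-triviality requirements on sub-intervals of the generalized equation -- a constraint the Elimination process understands natively. One then runs the process over $F$ (via canonical representatives in the hyperbolic case), transforms the resulting fundamental sequences to fundamental sequences over $\Gamma$ as in Proposition~\ref{prop7}, and finally applies the \emph{induced fundamental sequence} construction of Subsection~\ref{inher} to the subgroup generated by $P, Z, Z^{(1)}, Z_1^{(2,j,s)}, Z^{(2,j,s)}, Q$. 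The terminal groups of those induced sequences are the sought objects. Your proposal omits both the encoding step (which is what lets the elimination process ``see'' the inequalities, rather than checking them post hoc) and the induced-sequence step (which is what pulls the answer back down to the subgroup one actually cares about). As a smaller point, modelling $Q$ by fixed relators $q^{n_e}=c_e$ misrepresents the primitive-root constraint: the exponent is not a fixed integer but varies with the certificate, so there is no single finitely presented $H_0$ of the form you describe.
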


\begin{proof} We will first prove the statement of the proposition for the case when $\Gamma $ is a free group.  Let $\Gamma =F$. As in the proof in  of Theorem 11\cite{KMel}, we extensively use the technique of {\em generalized
equations}  described in \cite{Imp}, Subsection 4.3 and Section 5 and {\em cut equations} described in Section 5.7 \cite{Imp}.
The reader has to be familiar with these sections of \cite{Imp}. In the proof of Theorem 11, \cite{KMel} we show how to construct, given a group $K$ that does not have a sufficient splitting modulo a subgroup $H$, a finite system of cut equations $\Pi$ (see \cite{Imp}, Section 7.7) for a minimal in its Max-class solution such that the intervals of $\Pi$ are labeled by values of the generators of $H$. For each system $$U_{2,m_s}(P,Z,Z^{(1)},Z_{1}^{(2,j,s)},Z^{(2,j,s)})=1$$ we construct a cut equation modulo the parametric subgroup $\langle P,Z,Z^{(1)},Z_{1}^{(2,j,s)}\rangle $. The intervals of this cut equation are labeled by $P,Z,Z^{(1)},Z_{1}^{(2,j,s)}$.
For the intervals labeled by $P,Z,Z^{(1)}$ we add a cut equation for the system
 $$V_{2,m}(P,Z,Z^{(1)}, Z_{1}^{(2,j,s)})=1$$ modulo the parametric subgroup $\langle P,Z,Z^{(1)}\rangle .$
 For the intervals labeled by $P,Z$ we add  cut equations for the system
 $$U_{1,m}(P,Z,Z^{(1)})=1$$ modulo the parametric subgroup $\langle P,Z\rangle .$ The intervals labeled by $P$ will be the same for all these cut equations. Similarly we identify all the intervals labeled by the same variables that occur in different cut equations.
We now add to the obtained cut equation, which can be also considered as a generalized equation, the inequalities that guarantee that conditions (1)-(6) are satisfied. These inequalities are just indicating that
specializations of variables corresponding to some sub-intervals of the generalized equation must not be identities. But this is a standard requirement  for a solution of a generalized equation. For example, we write an equation $r_1(Z_{1}^{(2,j)})=\lambda _1$ and set that $\lambda _1$ is a base of the generalized equation. Then the condition $\lambda _1\neq 1$ must be automatically satisfied for a solution of a generalized equation.  So we can construct a finite number of generalized equations such that each certificate corresponding to minimal in their Max-classes specializations is a solution of one of these generalized equations $\mathcal {GE}$.

We now construct fundamental sequences of solutions of the equations $\mathcal {GE}$ modulo $\langle P\rangle $.  Notice that not all solutions from the fundamental sequence
satisfy the necessary inequalities, but if we restrict the sets of automorphisms on all the levels to those whose application preserves corresponding generalized equations, we will have solutions of inequalities too. Therefore, a generic family of solutions does satisfy the inequalities. Using our standard procedure we construct fundamental sequences induced by the subgroup with generators $$P, Z, Z^{(1)},
Z_{1}^{(2,j,s)},Z^{(2,j,s)}, Q,\  j=1,\ldots ,i_s,\ s=1,\ldots
,t.$$  The subgroups generated by the images of these generators in the terminal groups of these fundamental sequences are precisely the terminal groups of the fundamental sequences for $H$ modulo $P$.

Let now $\Gamma$ be a torsion free hyperbolic group.  For each system $$U_{2,m_s}(P,Z,Z^{(1)},Z_{1}^{(2,j,s)},Z^{(2,j,s)})=1$$ over $\Gamma$ we construct a system of equations over $F$ using canonical representatives and a cut equation modulo parameters subgroup $\langle P,Z,Z^{(1)},Z_{1}^{(2,j,s)}\rangle $ for this system.   For the intervals labeled by $P,Z$ we add  equations in a free group constructed using canonical representatives for  the system
 $$U_{1,m}(P,Z,Z^{(1)})=1$$ over $\Gamma$ 
and add cut equations modulo the parameters subgroup $\langle P,Z\rangle $. The inequalities over $\Gamma$ will correspond to inequalities over $F$ which, again, indicate that specializations of some variables of the generalized equation must be non-trivial.   We construct fundamental sequences modulo $\langle P\rangle $.  Then we transform these fundamental sequences into fundamental sequences over $\Gamma$  as we  did in the proof of Proposition \ref{prop7}. Then we construct fundamental sequences induced by the subgroup with generators $$P, Z, Z^{(1)},
Z_{1}^{(2,j,s)},Z^{(2,j,s)}, Q,\  j=1,\ldots ,i_s,\ s=1,\ldots
,t.$$ 
 The subgroups generated by the images of these generators in the terminal groups of these fundamental sequences are the groups we are looking for.
\end{proof}

This implies the following result.

\begin{cor} There is an algorithm to construct the initial fundamental sequences for $Z$ of level (2,1) and width $i$ related to $\Gamma _{R(V_2)}.$
\end{cor}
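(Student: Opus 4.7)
The plan is to run the algorithm of the preceding proposition across the finite collection of configuration groups that can arise at level $(2,1)$ of width $i$, and then post-process the output to extract the required fundamental sequences for $Z$ modulo $\langle P\rangle$.

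First I would enumerate, from the finite family $\Gamma_{R(V_{2,1})},\ldots,\Gamma_{R(V_{2,t})}$ already constructed in Proposition~\ref{TX}, all tuples $(m_1,\ldots,m_t)$ and all widths $i = i_1 + \cdots + i_t$ up to the finite bound implicit in the construction. For each such data we have, again by Proposition~\ref{TX}, the explicit irreducible systems $U_{2,m_s}(P,Z,Z^{(1)},Z_1^{(2,j,s)},Z^{(2,j,s)}) = 1$ and their associated strict fundamental sequences modulo $\langle P,Z,Z^{(1)},Z_1^{(2,j,s)}\rangle$. The configuration group $H$ of Definition~\ref{d2} is then determined up to finitely many possibilities (indexed by the branches selected at each $(2,m_s)$ node, together with the choice of primitive roots $Q$), and for each of these possibilities the previous proposition gives an algorithm to produce every terminal group of every fundamental sequence for $H$ modulo $\langle P\rangle$.

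Second, I would lift these terminal groups to full fundamental sequences. Once the terminal group $T$ of a fundamental sequence for $H$ modulo $\langle P\rangle$ is available by finite generating sets, Proposition~\ref{prop6} (relative JSJ) together with Proposition~\ref{prop7} (strict fundamental sequences over $\Gamma$) can be applied iteratively to $H$ with the sequence of subgroups $\{\langle P\rangle, T\}$ as the relative data, producing an explicit strict well aligned fundamental sequence that ends in $T$. The initial fundamental sequence \emph{for $Z$} is then the one induced on the subgroup generated by the $Z$-variables and the auxiliary variables $Z^{(1)}, Z_1^{(2,j,s)}, Z^{(2,j,s)}, Q$, which we can compute effectively by the induced NTQ construction of Section~\ref{inher} (which is algorithmic thanks to Proposition~\ref{int}).

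Third, I would filter the resulting finite list by verifying conditions (1)--(6) of Definition~\ref{d2}. Conditions (1)--(4) are statements about algebraicity and non-equivalence of the various tuples $Z_1^{(2,j,s)}, Z^{(2,j,s)}$; by Theorem~\ref{term} (in particular, the effective computation of the bound $N$ for algebraic, pairwise non-MAX-equivalent extensions) and Proposition~\ref{cred} (effective reducing quotients), these are decidable. Conditions (5) and (6) are purely universal statements about the impossibility of extending a given tuple, and are decidable by Theorem~\ref{aetheory}. Discarding those fundamental sequences that fail any of (1)--(6) and keeping only those corresponding to depth-$1$ generic families (the depth-$2$ case being handled separately by the same mechanism on the auxiliary groups $W_{\mathrm{surplus}}$) leaves us with a finite, effectively constructed list of initial fundamental sequences of level $(2,1)$ and width $i$.

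The main obstacle is conceptual rather than computational: ensuring that the inequality constraints (1)--(6), which are genuine \emph{non}-equalities at the level of $\Gamma$, survive the passage through the generalized equation machinery and the Rips--Sela canonical representatives used in the previous proposition. This is handled because each inequality translates into the requirement that some specified sub-interval of the generalized equation carries a nontrivial specialization, which is a built-in property of any solution of a generalized equation, and because the parametrization / implicit function theorem used along each branch preserves the generic character of the family being selected.
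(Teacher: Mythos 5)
Your step 1 (enumeration) is fine, and your final paragraph correctly identifies that the inequalities of Definition~\ref{d2} survive the passage to generalized equations. But your step 2 is where the argument breaks down, and it is the step that actually needs to carry the load.

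You propose to recover a full fundamental sequence from its terminal group $T$ by applying Propositions~\ref{prop6} and~\ref{prop7} to the configuration group $H$ relative to $\{\langle P\rangle, T\}$. This does not work, for two reasons. First, the configuration group $H$ is precisely the object the paper flags as \emph{not} being given by a known finite system of equations (``Notice that we do not know a system of equations defining a configuration group''). Propositions~\ref{prop6}, \ref{prop7} and~\ref{prop8} all take a finitely presented (or, in the relatively hyperbolic setting, an explicitly presented) group as input; $H$ is only given as a discriminated limit of certificates, so none of these can be applied to it directly. Second, even setting aside the presentation issue, a fundamental sequence for $H$ \emph{modulo} the pair $\{\langle P\rangle, T\}$ is a different object from a fundamental sequence for $H$ modulo $\langle P\rangle$ whose \emph{terminal} group happens to be $T$: the first treats $T$ as a peripheral/parametric subgroup at every level, the second has $T$ appearing only at the bottom. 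These are not interchangeable, and no lemma in the paper asserts they are.

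The intended route is shorter: the proof of the preceding proposition does not merely exhibit the terminal groups; it \emph{constructs} the induced fundamental sequences modulo $\langle P\rangle$ (via the generalized-equation / cut-equation reduction, the restriction of canonical automorphisms to those preserving the generalized equations, and the induced NTQ construction of Section~\ref{inher}), and then reads off the terminal groups as the subgroups generated by the images of $P, Z, Z^{(1)}, Z_1^{(2,j,s)}, Z^{(2,j,s)}, Q$. Since by Definition~\ref{d2} the initial fundamental sequences of level $(2,1)$ and width $i$ \emph{are} exactly the fundamental sequences for a configuration group modulo $\langle P\rangle$, and since the conditions (1)--(6) are already encoded as base-nontriviality requirements in the generalized equations, the Corollary is an immediate restatement of what that algorithm produces. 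There is nothing to lift and no separate filtering pass; your third step is solving a problem that the generalized-equation encoding has already solved. Your argument would be repaired simply by invoking the construction in the proof of the proposition rather than only its stated conclusion about terminal groups.
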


 Lemma 27, \cite{KMel},  states that the set of parameters $P$ for which there exists a fundamental sequence of level $(2,1)$ and width $i$ and a certificate, consists of those $P$ for which  there exists a generic family of certificates ({\em generic certificate}) and those for which all the certificates factor through a proper projective image of this fundamental sequence. Actually, Lemma 27 deals with certificates satisfying only properties (1)-(5), but the proof does not change if we add the property (6) to the definition of a certificate.

For a given value of $P$ the formula $\Theta$
can be proved on level  (2,1) and depth 1
 if and only if  the following
conditions are satisfied.
\begin{enumerate}

\item [(a)] There exist algebraic solutions  for some system of equations
$U_{i,coeff}=1$ corresponding to  (a quasi-convex closure of) the terminal group of a fundamental
sequence $V_{i,\rm fund}$ for a configuration group modulo $P$.
\item [(b)] These solutions do not factor through the fundamental sequences that
describe solutions from $V_{i,\rm fund}$ that do not satisfy one of
the properties (1)-(6). There is a finite number of such fundamental
sequences.

\item  [(c)] These solutions do not factor through (the quasi-convex closure of)
 the terminal groups of fundamental sequences of
level (2,1) and greater depth derived from $V_{i,\rm fund}.$

\item  [(d)] $(P,Z,Z^{(1)})$ cannot be
extended to a solution of  $V=1$ by arbitrary $X$ ($X$ of level
0) and $Y$ of level (1,0).
\end{enumerate}

In this case there is a generic certificate of level $(2,1)$ width $i$ and depth 1.
These conditions can be described by a boolean combination of  $\exists\forall$-formulas of type (\ref{AE}).
Similarly we consider fundamental sequences of level $(2,1)$ width $i$ and depth 2 and deeper fundamental sequences of level $(2,1)$ width $i$. We construct the projective tree (see \cite{KMel}, Section 11) to construct these deeper sequences.

 We now need another algorithmic result that states that  the main technical tool of the procedure of constructing the projective tree,  tight enveloping NTQ groups and fundamental sequences, can be effectively constructed.

\subsection{Tight Enveloping NTQ Groups}
We now have to modify the definition of  a {\em tight enveloping NTQ group and fundamental sequence}. This is
done as follows. We begin with a fundamental sequence satisfying first and second restrictions and the NTQ group for it that we denote $\Gamma _{R(L_1)}.$  We denote the fundamental sequence $c(L_1)$. Let $G$ be a subgroup of $\Gamma _{R(L_1)}$  and $G=\Gamma _{R(U)}$  be  the quasi-convex closure of $G$. Our goal is to construct
a fundamental  sequence $c(U)$ and an NTQ group for $G$ such that the Kurosh rank of the NTQ group for $G$ with respect to $c(L_1)$ is the same as the Kurosh rank of $c(U)$. The fundamental sequence $c(U)$ will have also other important properties that we will need later.

\begin{enumerate}\item [(a)] We take the NTQ group induced by the image  of $G$ in $\Gamma _{R(L_1)}$  from
$\Gamma _{R(L_1)}$ as described in Subsection \ref{inher}. This does not
increase the Kurosh rank, because we
 add only elements from abelian subgroups and conjugating elements that are mapped to the identity on the next lower level. Denote this group by $Ind(\Gamma _{R(U)})$. Then we do the
following.

  \item [(b)] We add from the top to the bottom (considering on level $i+1$ the image of the group
extended on level $i$)  those QH subgroups $Q$ of the  group $\Gamma _{R(L_1)}$ that intersect
  $Ind(\Gamma _{R(U)})$ in a subgroup of finite index (in $Q$) and have less free variables than the
  subgroup in the intersection.
  
  \item [(c)] We add edge groups of abelian subgroups of the
enveloping group that have non-trivial intersection with
$Ind(\Gamma _{R(U)})$ if this does not increase the Kurosh rank.

\item [(d)] The terminal level of $Ind(F_{R(U)})$ admits a free decomposition induced from the free decomposition of the terminal group of the NTQ group $\Gamma _{R(L_1)}$, $M=M_1*\ldots M_s*F$, where $F$ is a free group (possibly trivial) and $M_1,\ldots ,M_s$ are embedded into conjugates of the non-cyclic freely indecomposable factors in the free decomposition of the terminal level of  $\Gamma _{R(L_1)}$. We replace each $M_i$ by the factor that contains it.
\item [(e)] We add to $Ind(F_{R(U)})$ from bottom to top all the QH
subgroups of $\Gamma _{R(L_1)}$ that have non-trivial intersection with
$Ind(\Gamma _{R(U)})$, do not have free variables, the corresponding level
of $Ind(\Gamma _{R(U)}$ intersects non-trivially some of their adjacent
vertex groups, and their addition decreases the Kurosh rank.

\item [(f)] We add from bottom to top all the elements that conjugate different $QH$
subgroups (abelian vertex groups) of $Ind(\Gamma _{R(U)})$ into the same
QH subgroup of ${\mathcal L_1}$ if this decreases the Kurosh rank.

\item [(g)]  We add (from bottom to top) to each non-cyclic  factor $H_i$ in  the free decomposition $H_1\ast\ldots\ast H_t\ast F$ of the image of the constructed fundamental sequence on each level,  the abelian vertex groups and  edge groups on this level of $c(L_1)$ that are intersected non-trivially by $H_i$.
\end{enumerate}

We make these steps (which we call adjustment) iteratively and denote the obtained group by $Adj (\Gamma _{R(U)})$.
We  repeat the adjustment iteratively as many times as possible.

We call the constructed NTQ group the tight
enveloping NTQ group. We will also call the corresponding system
(fundamental sequence) the tight enveloping system (fundamental
sequence).  As a size of  a
QH subgroup $Q$ in the tight enveloping NTQ group we consider the
size of the QH subgroup in the enveloping group containing $Q$ as a
subgroup of a finite index.

 Given a fully residually $\Gamma$ group $G=\Gamma _{R(U)}$, the  NTQ system $W=1$ corresponding to a fundamental sequence for $U=1$ (with the quadratic system $S_1(X_1,\ldots, X_n)=1$ corresponding to the top level and its image), a system of equations ${\mathcal P}=1$  with coefficients in $\Gamma _{R(W)}$ having a solution in some extension of $\Gamma _{R(W)}$ we  construct fundamental sequences (satisfying first and second restrictions) for ${\mathcal P}=1$ modulo the non-cyclic free factors of the second level $\langle X_2,\ldots ,X_n\rangle $ of $\Gamma _{R(W)}.$ Consider one of these fundamental sequences and construct the $NTQ$ group  for it.  Denote it $\Gamma _{R(L_1)}$. Suppose $G$ is embedded into $\Gamma _{R(L_1)}$.  Suppose also that the family of simple closed curves that are mapped to the identity in each QH subgroup in $S_1$ is compatible with such a family for the 
 fundamental sequence $L_1$ (splitting of quadratic equations is compatible).

The Kurosh rank of the tight enveloping fundamental sequence $(TEnv
(S_1; L_1))$ extracted from $L_1$ is less than or equal to the Kurosh rank of ${\mathcal S}_1$
modulo free factors of $\langle X_2,\ldots ,X_n\rangle$  (because we
consider only fundamental sequences compatible with the free
factorization of the subgroup $\langle X_2,\ldots ,X_n\rangle$ 
 and compatible with the splitting of quadratic
equations as discussed in Subsections \ref{spl}, \ref{rk3}). If the Kurosh ranks
are the same, we  reorganize the levels of the enveloping
system ${ L}_1$ moving down stable QH subgroups (see \cite{KMel}, Section 7.3) into another system $L_2$ so that they have
the same fundamental solutions.  Then $size (TEnv(S_1; L_2))\leq size(S_1)$ for the tight enveloping fundamental sequence for $S_1=1$ constructed from the system $L_2$.
If all the parameters (Kurosh rank, size, $ab$) are the same, then
$TEnv(S_1; L_2)$ has one level, and the abelian decomposition has the same graph and QH and abelian vertex groups as $S_1$.  Notice, that  the
Kurosh rank of the tight enveloping NTQ fundamental sequence $(TEnv
(S_1; L_2))$ is the same as the maximal Kurosh rank of the corresponding
subgroup in the terminal group in the enveloping fundamental
sequence modulo free non-cyclic factors of $\langle X_2,\ldots ,X_n\rangle$. (Notice also that we do not induce the fundamental sequence by the terminal free factor of $S_1=1$ (generated by free variables of quadratic equations in $S_1=1$), we just take its image in $\Gamma _{R(L_2)}.$)

Suppose now that $H\leq \Gamma _{R(W)},$ and $(TEnv(H; S_1))$ is a tight enveloping fundamental sequence for $H$.  Then one can construct a tight enveloping fundamental sequence $(TEnv(H; L_1))$ for $H$ such that the Kurosh rank of 
$(TEnv(H; L_1))$  is bounded by the Kurosh rank of $(TEnv
(H; S_1))$  and in the case of equality one can modify the system $L_1$ into $L_2$
so that they have
the same fundamental solutions, and $(size, ab)$ for $(TEnv(H; L_2))$ is bounded  
by the $(size, ab)$ for $(TEnv(H; S_1))$.  In the case of the equality $(TEnv(H; L_2))$ has one level, and the abelian decomposition is similar to the decomposition for $(TEnv(H; S_1)).$

\begin{prop}
1) Given a fully residually $\Gamma$ group $G=\Gamma _{R(U)}$, the canonical NTQ system $W=1$ corresponding to a branch of the canonical embedding tree $T_{CE}(\Gamma _{R(U)})$ of the system $U=1$, a system of equations ${\mathcal P}=1$  with coefficients in $\Gamma _{R(W)}$ having a solution in some extension of $\Gamma _{R(W)}$, there is an algorithm for the construction of tight enveloping NTQ groups and fundamental sequences.

2) The bound in Lemma 28 from \cite{KMel} can be found effectively.
\end{prop}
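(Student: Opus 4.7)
The plan for part~(1) is to verify that every ingredient of the definition of the tight enveloping NTQ group is effectively computable, then iterate the adjustment finitely many times until stabilization. First I would construct the fundamental sequences for $\mathcal{P}=1$ modulo the rigid subgroups of the second level of $\Gamma_{R(W)}$; Proposition~\ref{recognrigid} gives the generators of those rigid subgroups inside the NTQ group, and Proposition~\ref{prop8} (together with Proposition~\ref{pr:solutions-special-Sb} to enforce the first and second restrictions) produces the required strict well aligned fundamental sequences. From each such sequence, Proposition~\ref{prop7} yields the NTQ group $\Gamma_{R(L_1)}$. Using Lemma~\ref{pres} to present the image of $G$ in $\Gamma_{R(L_1)}$ as a chain of amalgamated products and HNN extensions, and using decidability of the universal theory of $\Gamma$ (as in the proof of Proposition~\ref{recognrigid}), we can verify the embedding of $G$ into $\Gamma_{R(L_1)}$.

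Next, the induced NTQ system $Ind(\Gamma_{R(U)})$ is built by the algorithm of Subsection~\ref{inher}, whose effectiveness rests on Proposition~\ref{int} for intersections of the relatively quasi-convex closure of the image of $G$ with conjugates of vertex subgroups in the decompositions of $\Gamma_{R(L_1)}$. For each of the steps~(b)--(f) the recognition questions reduce to already available algorithmic tools: the free-product decomposition of the terminal level is read off the Grushko and JSJ decompositions supplied by Proposition~\ref{prop6}; testing non-trivial intersection of a QH subgroup with the current induced group uses Proposition~\ref{int}; testing finite index of such an intersection in a QH subgroup uses Proposition~\ref{int} together with the Euler characteristic count in the underlying surface; testing conjugacy of two QH (or abelian) vertex groups into a common QH (or abelian) vertex group of $\Gamma_{R(L_1)}$ uses Corollary~\ref{cy:conjugate-into}. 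Since Kurosh rank, size, and $ab$ are combinatorial invariants of the graph-of-groups decomposition, each candidate extension of the current group by a QH subgroup, conjugator, or edge group can be checked algorithmically to see whether it produces the required decrease or non-increase. Because each pass of the adjustment either strictly decreases the triple (Kurosh rank, size, $ab$) ordered lexicographically or leaves the NTQ group unchanged, and because this triple is bounded below, the iterative adjustment terminates in an effectively bounded number of steps, producing $TEnv(S_1)$ together with the explicit reorganization of levels needed when the Kurosh ranks coincide.

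For part~(2), the bound in Lemma~28 of \cite{KMel} controls the depth of an iterative construction on the projective tree, and its proof proceeds by showing that each descent in the tree either strictly decreases the complexity of the tight enveloping fundamental sequence or consumes one of a bounded number of non-equivalent algebraic extensions of a given homomorphism. The complexity decrease is effectively measurable by part~(1) above, since we can construct and compare $TEnv(S_1)$ at successive levels. The number of non-equivalent algebraic extensions is bounded effectively by the constant $N(K,H)$ of Theorem~\ref{term}, whose effectivity was reduced to decidability of the $\forall\exists$-theory of $\Gamma$, which has been established in Theorem~\ref{aetheory}. Combining these two effective sub-bounds, and tracking the finitely many branches of the construction, yields an effective bound for Lemma~28.

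The main obstacle I expect is in the adjustment step~(d), namely deciding whether a QH subgroup of $\Gamma_{R(L_1)}$ intersects $Ind(\Gamma_{R(U)})$ in a subgroup of finite index with strictly fewer free variables: this requires reconstructing the induced surface inside the QH subgroup from the generators supplied by Proposition~\ref{int} and comparing boundary-component behaviour. Once this recognition is made algorithmic via the quasi-convex closure construction and Proposition~\ref{int}, the rest of the argument is bookkeeping on the graph of groups.
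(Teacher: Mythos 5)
Your proposal is correct and in spirit takes the same route as the paper's proof, which is extremely terse: for part~(1) the paper simply invokes Proposition~\ref{int} and the observation that both $Ind(\Gamma_{R(U)})$ and $\Gamma_{R(L_1)}$ are toral relatively hyperbolic, and for part~(2) it says the bound ``can be found effectively as in Theorem~\ref{term}.'' Your expansion of part~(1)---running through Propositions~\ref{recognrigid}, \ref{prop8}, \ref{pr:solutions-special-Sb}, Lemma~\ref{pres}, Subsection~\ref{inher}, and then item-by-item through the adjustment steps (b)--(f) with Proposition~\ref{int} and Corollary~\ref{cy:conjugate-into}---is exactly the set of details the paper is eliding, and your lexicographic termination argument on the triple (Kurosh rank, size, $ab$) is the intended one; this is not a different route, just a much fuller account. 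For part~(2) there is a small divergence in emphasis: the paper's ``as in Theorem~\ref{term}'' means the bound is obtained directly by encoding the relevant existence statement as an $\exists\forall$-sentence and searching for the least $m$ at which it becomes false, using decidability of the $\forall\exists$-theory (Theorem~\ref{aetheory}); you instead trace the internal structure of the proof of Lemma~28 of \cite{KMel} and bound the two sources of descent (complexity decrease, certified effective by part~(1); number of inequivalent algebraic extensions, bounded by the effective $N(K,H)$ of Theorem~\ref{term}). Both routes ultimately rest on Theorem~\ref{term} and on decidability, and both are sound; the paper's route is the more economical one because it avoids having to re-examine the proof of Lemma~28, whereas your route is more self-contained and makes the dependence on part~(1) explicit. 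No gaps; the identified obstacle in adjustment step~(d) is real but is handled exactly as you say, through the quasi-convex closure and Proposition~\ref{int}.
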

\begin{proof} 1) The first algorithm can be constructed using Proposition \ref{int}, because the construction begins with the induced NTQ group $Ind(F_{R(U)})$, and this group is relatively hyperbolic as well as $\Gamma _{R(L_1)}.$  Indeed, in the construction of tight enveloping fundamental sequences and systems we have to solve the following algorithmic problems:
find intersection of conjugates of relatively quasi-convex subgroups of total relatively hyperbolic groups and conjugating elements, and 
find solution sets of quadratic systems of equations in NTQ groups (to determine the rank of a QH subgroup).  The first problem is decidable by Proposition \ref{int}, the second is solvable by Proposition \ref{relh}.

2) The bound in Lemma 28 from \cite{KMel} can be found effectively as in Theorem \ref{term}.
\end{proof}
We also consider similarly fundamental sequences of all levels  $(m,m-1)$. We now can make all the steps of the quantifier elimination procedure (to boolean combination of formulas (\ref{AE})) algorithmically.

This proves Theorem \ref{main}.

\newpage

\end{document}